\documentclass[12pt]{amsart}

\usepackage{amsmath}

\usepackage[T1]{fontenc}
\usepackage[utf8]{inputenc}
\usepackage[text={430.8pt, 576pt}, centering]{geometry}

\usepackage{lmodern}

\usepackage{bm}
\usepackage{amssymb, amsfonts}
\usepackage{paralist} 

\usepackage{comment}

\usepackage[english]{babel} 
\usepackage{csquotes} 
\usepackage[backend=bibtex, giveninits=true, style=alphabetic]{biblatex}

\usepackage{tabls}

\usepackage{tikz}
\usetikzlibrary{arrows}

\addbibresource{MissingPath.bib}

\usepackage[breaklinks=true, colorlinks=true]{hyperref}


\newtheorem{theorem}{Theorem}[section]
\newtheorem{lemma}[theorem]{Lemma}
\newtheorem{corollary}[theorem]{Corollary}
\newtheorem{proposition}[theorem]{Proposition}

\theoremstyle{definition}
\newtheorem{definition}[theorem]{Definition}
\newtheorem{remark}[theorem]{Remark}
\newtheorem{question}[theorem]{Question}

\newtheorem{example}[theorem]{Example}

\newenvironment{statement}{\begin{quote}}{\end{quote}}

\newenvironment{verlong}{}{}

\excludecomment{verlong}
\includecomment{vershort}

\newcommand{\NN}{\mathbb{N}}
\newcommand{\ZZ}{\mathbb{Z}}

\newcommand{\with}{\,\colon\,}

\DeclareMathOperator{\lk}{lk}
\DeclareMathOperator{\dl}{dl}

\newcommand{\set}[1]{\left\{ #1 \right\}}

\newcommand{\tup}[1]{\left( #1 \right)}
\newcommand{\ive}[1]{\left[ #1 \right]}

\newcommand{\chitil}{\widetilde{\chi}}

\newcommand{\Pm}[1]{\mathcal{PM}\left(#1\right)}
\newcommand{\Pf}[1]{\mathcal{PF}\left(#1\right)}

\newcommand{\NS}{V^{\prime}}

\begin{document}

\title{The path-missing and path-free complexes of a directed graph}

\date{June 10, 2026}

\author{Darij Grinberg}

\address{Drexel University,
Korman Center, Room 263,
15 S 33rd Street,
Philadelphia, PA, 19104,
USA}
\email{darijgrinberg@gmail.com}

\author{Lukas Katth\"an}

\address{Goethe-University Frankfurt, Institute of Mathematics, Robert-Mayer-Str. 10, 60325 Frankfurt am Main, Germany}
\email{lukaskatthaen@gmx.de}

\author{Joel Brewster Lewis}

\address{George Washington University,
Phillips Hall, Room 707,
801 22nd St NW,
Washington, DC 20052 USA}
\email{jblewis@gwu.edu}

\keywords{directed graph, simplicial complex, Alexander dual, Euler characteristic, f-polynomial, path-missing complex, path-free complex, graph theory, discrete Morse theory}
\subjclass[2010]{Primary: 05E45; Secondary: 05C20, 13F55.}

\thanks{DG thanks Galen Dorpalen-Barry and Richard Stanley for interesting conversations and the Mathematisches Forschungsinstitut Oberwolfach as well as the Institut Mittag--Leffler (Djursholm) for their hospitality.
The authors thank the referees for informative and helpful suggestions.}

\begin{abstract}
We study two simplicial complexes arising from a directed graph
$G = (V, E)$ with two chosen vertices $s$ and $t$:
the \emph{path-free complex},
consisting of all subsets $F \subseteq E$ that contain no path
from $s$ to $t$, and the \emph{path-missing complex},
its Alexander dual.
Using discrete Morse theory, we prove that both complexes have
well-behaved homotopy types -- either contractible or
homotopy-equivalent to spheres.
\end{abstract}

\maketitle

\section{Introduction}

In this paper, we introduce two simplicial complexes associated to a directed graph with specified source and sink nodes, which we call the \emph{path-free} and \emph{path-missing} complexes of the graph.
These complexes, whose definitions were inspired in a natural way by the theory of network flows and cuts, are not constructed \emph{a priori} to be well behaved.
Our main result will show that, nevertheless, they have a very nice topological/homotopical structure.
The two complexes add to a long history of graph-related complexes, which are often defined in terms of some form of connectivity or independence (cf. \cite{BDMRSX}, \cite[Section 5]{Forman-user}, \cite{Jonsson}, \cite{Kozlov-CAT}, \cite{EhrHet}, or the recently proved Kalai--Meshulam conjecture \cite{CSSS}, \cite{Engstrom20}, \cite{Kim22}, \cite{ZhangHu25}).
The path-free complex can be viewed as a ``local'' version of the ``complex of non-connected graphs'' from \cite[Section 5]{Forman-user}, in which the connectivity of the entire graph is replaced by the existence of paths between two given vertices. Our graphs here will be directed; this adds another twist on the existing theory, which is usually concerned with undirected graphs.

The rest of this introduction is devoted to the precise definitions of the path-free and path-missing complexes, and the statements of our main results.


\subsection{\label{sect.defs}General definitions}

In the following, all graphs are directed multigraphs, and self-loops are allowed.
Let $G = \tup{V,E, s,t}$ be a directed graph\footnote{We recall that a \emph{directed graph} consists of a set $V$ of vertices and a set $E$ of arcs. Each arc has a \emph{source} and a \emph{target}; if these are equal, the arc is called a \emph{self-loop}. An arc can carry additional information beyond its source and target; thus, parallel arcs are allowed.}
with vertex set $V$, arc set $E$, and two distinguished vertices $s,t \in V$.
A \emph{walk} (of $G$) means a sequence $\tup{v_0, e_1, v_1, e_2, v_2, \ldots, e_n, v_n}$, where $n$ is a nonnegative integer and where $v_0, v_1, \ldots, v_n \in V$ and $e_1, e_2, \ldots, e_n \in E$ have the property that each $e_i$ is an arc from $v_{i-1}$ to $v_i$.
This walk is said to have the vertices $v_0, v_1, \ldots, v_n$ and the arcs $e_1, e_2, \ldots, e_n$, and it is furthermore said to be a walk from $v_0$ to $v_n$.
A \emph{path} means a walk that does not visit any vertex more than once.
If $u, v \in V$, then a \emph{$u-v$-walk} means a walk from $u$ to $v$.
A \emph{$u-v$-path} means a $u-v$-walk that is a path.

We shall use the language of (abstract) simplicial complexes.
For an introduction to this language, see \cite[Section 9]{Bjo} (but unlike \cite{Bjo}, we shall not exclude the empty set in our simplicial complexes).
We recall the most fundamental notions of this language:
A \emph{simplicial complex} means a pair $\tup{W, \Delta}$ consisting of a finite set $W$ and a subset $\Delta$ of the powerset $2^W$ of $W$ such that the following axiom holds:\footnote{In terms of posets, this axiom says that $\Delta$ is an order ideal (i.e., a down-set) of the Boolean lattice $2^W$ (ordered by inclusion).}
\begin{statement}
If $A \in \Delta$ and $B \subseteq A$, then $B \in \Delta$.
\end{statement}
(Thus, $\varnothing \in \Delta$ if $\Delta$ itself is not empty; but $\Delta$ is allowed to be empty.)
Unlike various authors, we do not require $\set{w} \in \Delta$ for all $w \in W$.
We shall often denote the simplicial complex $\tup{W, \Delta}$ by $\Delta$; that is, we omit $W$ from the notation.
We will refer to the set $W$ as the \emph{ground set} of the complex $\tup{W, \Delta}$, and say that $\tup{W, \Delta}$ is a complex \emph{on the ground set $W$}.
The elements of $\Delta$ (which are, themselves, subsets of $W$) are called the \emph{faces} of the complex $\Delta$.

Given a graph $G$, we say that a subset $F \subseteq E$ \emph{contains} a path $p$ if $F$ contains each arc of $p$.  Let us now define the two simplicial complexes we are going to study.

\begin{definition} \label{def.PfPm}
    The \emph{path-free complex} $\Pf{G}$ and the \emph{path-missing complex} $\Pm{G}$ are the following simplicial complexes on the ground set $E$:
    \begin{align*}
        \Pf{G} &:= \set{ F \subseteq E \with F \text{ contains no $s-t$-path}} ; \\
        \Pm{G} &:= \set{ F \subseteq E \with E\setminus F \text{ contains an $s-t$-path}} .
    \end{align*}
\end{definition}

\begin{example} \label{exa.PfPm1}
Let $G$ be the following directed graph:
\[
\begin{tikzpicture}[->,>=stealth',shorten >=1pt,auto,node distance=3cm, thick,main node/.style={circle,fill=blue!20,draw}]
\begin{scope}[every edge/.style={draw=black,very thick}]
 \node[main node] (1) {$s$};
 \node[main node] (2) [above right of=1] {$p$};
 \node[main node] (3) [below right of=1] {$q$};
 \node[main node] (4) [right of=2] {$r$};
 \node[main node] (5) [below right of=4] {$t$};
 \path (1) edge node {$a$} (2) (2) edge node {$b$} (4) (4) edge node {$c$} (5) (1) edge node[below left] {$d$} (3) (3) edge node[below right] {$e$} (5) (3) edge node {$f$} (2) (4) edge node {$g$} (3);
\end{scope}
\end{tikzpicture}.
\]
Then the faces of the simplicial complex $\Pf{G}$ are the sets
\[
\set{b,c,e,f,g},\ \set{a,c,e,f,g},\ \set{b,c,d,g},\ \set{a,c,d,f,g},\ \set{a,b,e,f},\ \set{a,b,d,f,g}
\]
as well as all their subsets.
Meanwhile, the faces of the simplicial complex $\Pm{G}$ are the sets
\[
\set{d,e,f,g},\ \set{c,d,f},\ \set{a,b,c,f,g},\ \set{a,e,g}
\]
as well as all their subsets.
\end{example}

\subsection{\label{sect.main}Statements of the main results}

\subsubsection{\label{subsect.main.euler}The Euler characteristic}


The \emph{Euler characteristic} $\chi\tup{\Delta}$ of a simplicial complex $\Delta$ is defined to be the alternating sum $f_0 - f_1 + f_2 - f_3 \pm \cdots = \sum\limits_{i \geq 0} \tup{-1}^i f_i$, where $f_i$ denotes the number of faces $I \in \Delta$ having size $\# I = i+1$. (The faces of $\Delta$ are themselves finite sets, so they have well-defined sizes. In combinatorial topology, the \emph{dimension} of a face $I$ is defined to be its size minus $1$, so that $f_i$ counts the faces $I \in \Delta$ of dimension $i$.)

The \emph{reduced Euler characteristic} $\chitil\tup{\Delta}$ of a simplicial complex $\Delta$ is defined to be the alternating sum
\begin{align}
- f_{-1} + f_0 - f_1 + f_2 - f_3 \pm \cdots
= \sum\limits_{i \geq -1} \tup{-1}^i f_i
= \sum_{I \in \Delta} \tup{-1}^{\# I - 1} ,
\label{eq.chitil.def}
\end{align}
where $f_i$ denotes the number of faces $I \in \Delta$ having size $\# I = i+1$. If $\Delta \neq \varnothing$, then the two characteristics are related by the equality $\chitil\tup{\Delta} = \chi\tup{\Delta} -1$, since $f_{-1} = 1$.

A walk of $G$ is said to be
\begin{itemize}
\item \emph{nontrivial} if it contains at least one arc;
\item \emph{closed} if it starts and ends at one and the same vertex\footnote{Here, of course, a walk is said to \emph{start} at $u$ and \emph{end} at $v$ if it is an $u-v$-walk.};
\item a \emph{cycle} if it is nontrivial and closed and has the property that all but the last of its vertices are distinct (i.e., if $v_0, v_1, \ldots, v_m$ are its vertices, then $v_0, v_1, \ldots, v_{m-1}$ are distinct).
\end{itemize}
For example, the graph $G$ in Example~\ref{exa.PfPm1} has a cycle, which contains the vertices $p, r, q, p$ and the arcs $b, g, f$. 

A vertex $v$ of $G$ is said to be a \emph{nonsink} if there exists at least one arc of $G$ whose source is $v$.
We denote by $\NS$ the set of all nonsinks of $G$.

A \textit{useless arc} of $G$ will mean an arc that belongs to no $s-t$-path. In particular, any self-loop is a useless arc.
For example, the graph $G$ in Example~\ref{exa.PfPm1} has no useless arcs, but if we remove its arc $d$, then the resulting graph has $f$ as a useless arc.

We can now describe the reduced Euler characteristics of $\Pm{G}$ and $\Pf{G}$.

\begin{theorem}
\label{thm.PM.chitil} \hfill
\begin{enumerate}
    \item[(a)] If $G$ contains a useless arc or a cycle or satisfies $\tup{E = \varnothing \text{ and } s \neq t}$, then $\chitil\tup{\Pm{G}} = 0$.
    \item[(b)] Otherwise, $\chitil\tup{\Pm{G}} = \tup{-1}^{\# E - \# \NS + 1}$.
\end{enumerate}
\end{theorem}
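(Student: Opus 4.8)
My plan is to prove (a) and (b) simultaneously by induction on $\#E$, after two preliminary reductions. If $G$ has a useless edge $e_0$, then for every $F\subseteq E$ the set $E\setminus F$ contains an $s$-$t$-path if and only if $E\setminus(F\cup\{e_0\})$ does (no $s$-$t$-path uses $e_0$); hence $F\mapsto F\triangle\{e_0\}$ is a fixed-point-free involution of $\Pm{G}$ changing $\#F$ by $\pm1$, so it cancels the faces of $\Pm{G}$ in pairs and $\chitil(\Pm{G})=0$ (equivalently, $\Pm{G}$ is a cone with apex $e_0$). This settles the useless-edge part of (a), and, when $E\neq\emptyset$, also the cases ``$s=t$'' and ``$s$ a sink with $s\neq t$'', since then every edge is useless; the case $E=\emptyset,\,s\neq t$ is immediate ($\Pm{G}=\emptyset$). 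So I may henceforth assume $G$ has no useless edge, which (when $E\neq\emptyset$) forces $s\neq t$, makes $s$ a nonsink, and excludes self-loops.

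The key tool is a deletion--contraction formula. For an edge $e\colon s\to v$ with $v\neq s$, let $G\setminus e$ delete $e$ and $G/e$ contract $e$ (so the new source $s''$ is the merge of $s$ and $v$, and the new sink is $t$ unless $t=v$). I will show that, as complexes on $E\setminus\{e\}$,
\[ \lk_{\Pm{G}}(e)\cong\Pm{G\setminus e}, \qquad \dl_{\Pm{G}}(e)\cong\Pm{G/e}, \]
so that, by the standard relation $\chitil(\Delta)=\chitil(\dl_\Delta w)-\chitil(\lk_\Delta w)$,
\[ \chitil(\Pm{G})=\chitil(\Pm{G/e})-\chitil(\Pm{G\setminus e}). \]
The link identity is routine: for $F\subseteq E\setminus\{e\}$, $F\cup\{e\}\in\Pm{G}$ iff $(E\setminus\{e\})\setminus F$ contains an $s$-$t$-path, and since this set omits $e$ that means an $s$-$t$-path of $G\setminus e$. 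The deletion identity is the technical heart: one must show $(E\setminus\{e\}\setminus F)\cup\{e\}$ contains an $s$-$t$-path of $G$ iff $E\setminus\{e\}\setminus F$ contains an $s''$-$t''$-path of $G/e$. ``Only if'' is the usual ``image of a walk contains a path''; for ``if'' one lifts an $s''$-$t''$-path $q$ of $G/e$ back to $G$, prepending $e$ when the first edge of $q$ issues in $G$ from $v$ rather than from $s$. Here one uses crucially that, because $e$ leaves $s$, the vertex $s''$ is the \emph{source} of $G/e$, so $q$ never re-enters it --- this excludes the configuration ``enter the merged vertex as $v$, leave it as $s$'', the one lift that would not be a path of $G$.

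Now the induction on $\#E$. Base case $\#E=0$: if $s=t$ then $\Pm{G}=\{\emptyset\}$ and $\chitil=-1=(-1)^{0-0+1}$ (with $\#V'=0$), and if $s\neq t$ then $\Pm{G}=\emptyset$, $\chitil=0$. For $\#E\geq1$ with $G$ having no useless edge, pick $e\colon s\to v$ ($v\neq s$), apply the recursion, and feed in the inductive hypothesis; the remaining content is graph theory. If $G$ has a cycle, then so does $G/e$, and $G\setminus e$ has a cycle or a useless edge (in the subtle case where every cycle of $G$ uses $e$, the first edge of the $v$-to-$s$ return arc becomes useless in $G\setminus e$), so both terms vanish and $\chitil(\Pm{G})=0$, as (a) demands. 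If $G$ is acyclic, then $G/e$ has a cycle iff $G$ has an $s$-$v$-path avoiding $e$. When such a path exists: $G/e$ has a cycle (term $0$), while $G\setminus e$ is acyclic with no useless edge (any $s$-$t$-path that used $e$ reroutes through the $e$-avoiding $s$-$v$-path, and the splice is automatically a path since in an acyclic graph its two pieces lie on opposite sides of $v$ in a topological order) and has the same nonsinks as $G$, so $\#E-\#V'$ drops by $1$ and induction gives $\chitil(\Pm{G})=0-(-1)^{\#E-\#V'}=(-1)^{\#E-\#V'+1}$. When no such path exists: $G/e$ is acyclic, and either $v=t$ with $G$ the single edge $s\to t$ (checked by hand), or $v\neq t$, in which case $v$ is unreachable from $s$ in $G\setminus e$ so the latter has a useless edge (term $0$), while $G/e$ has no useless edge and the contraction removes one edge and one nonsink, keeping $\#E-\#V'$ fixed, so $\chitil(\Pm{G})=(-1)^{\#E-\#V'+1}-0$. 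In every case the claimed value results.

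The step I expect to be the main obstacle is the identification $\dl_{\Pm{G}}(e)\cong\Pm{G/e}$: it is the one place where the no-repeated-vertex condition for paths must be reconciled with contraction, and it is why the recursion is forced to contract --- not merely delete --- an edge, and to contract an edge incident to a distinguished vertex. Everything after it is a somewhat lengthy but elementary case analysis.
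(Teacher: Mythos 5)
Your proposal is correct and follows essentially the same route as the paper: the same cone/involution reduction for useless edges, the same link/deletion identities $\lk_{\Pm{G}}(e) = \Pm{G\backslash e}$ and $\dl_{\Pm{G}}(e) = \Pm{G/e}$ for an edge $e$ leaving $s$, the same Euler-characteristic recursion, and the same induction on $\#E$ with an equivalent case analysis (your dichotomy ``does $G$ have an $s$--$v$-path avoiding $e$?'' is equivalent to the paper's ``does $G \backslash e$ have a useless edge?''). The only inessential differences are that you treat the case $v = t$ inline rather than as a separate base case (the paper's Step 4), and your hedge ``$G \backslash e$ has a cycle or a useless edge'' is more cautious than needed --- once $G$ has no useless edge, no cycle can pass through $s$ (an edge with target $s$ would be useless), so any cycle of $G$ avoids $e$ and survives deletion outright.
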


\begin{theorem}
\label{thm.PF.chitil}
If $E \neq \varnothing$, then:
\begin{enumerate}
    \item[(a)] If $G$ contains a useless arc or a cycle, then $\chitil\tup{\Pf{G}} = 0$.
    \item[(b)] Otherwise, $\chitil\tup{\Pf{G}} = \tup{-1}^{\# \NS}$.
\end{enumerate}
If $E = \varnothing$, then $\chitil\tup{\Pf{G}}$ equals $0$ if $s = t$ and $-1$ otherwise.
\end{theorem}

We shall prove Theorem~\ref{thm.PM.chitil} and Theorem~\ref{thm.PF.chitil} in Section~\ref{sect.euler-proof}, after some preparations.

As a particularly simple-sounding corollary of the above two theorems, we can find the parities of the numbers of faces of $\Pf{G}$ and of $\Pm{G}$ (see the end of Section~\ref{sect.euler-proof} for detailed proofs).

\begin{corollary}
\label{cor.PM.parity} \hfill
\begin{enumerate}
    \item[(a)] If $G$ contains a useless arc or a cycle or satisfies $\tup{E = \varnothing \text{ and } s \neq t}$, then $\# \tup{\Pm{G}}$ is even.
    \item[(b)] Otherwise, $\# \tup{\Pm{G}}$ is odd.
\end{enumerate}
\end{corollary}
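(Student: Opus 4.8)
The plan is to derive this immediately from Theorem~\ref{thm.PM.chitil} together with the elementary fact that the reduced Euler characteristic of any simplicial complex determines the parity of its number of faces. Concretely, for an arbitrary simplicial complex $\Delta$, formula~\eqref{eq.chitil.def} writes $\chitil(\Delta) = \sum_{I \in \Delta} (-1)^{\# I - 1}$; since $(-1)^k \equiv 1 \pmod{2}$ for every integer $k$, reducing this sum modulo $2$ gives
\[
\chitil(\Delta) \equiv \sum_{I \in \Delta} 1 = \#(\Delta) \pmod{2}.
\]
In particular, $\#(\Delta)$ is even if and only if $\chitil(\Delta)$ is even.

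Applying this with $\Delta = \Pm{G}$: in the situation of part~(a) (that is, $G$ has a useless edge or a cycle, or $E = \varnothing$ and $s \neq t$), Theorem~\ref{thm.PM.chitil}(a) gives $\chitil(\Pm{G}) = 0$, which is even, so $\#(\Pm{G})$ is even; in the complementary situation of part~(b), Theorem~\ref{thm.PM.chitil}(b) gives $\chitil(\Pm{G}) = \tup{-1}^{\# E - \# V' + 1} \in \set{1,-1}$, which is odd, so $\#(\Pm{G})$ is odd. Since the two cases of the corollary are phrased to match the two cases of Theorem~\ref{thm.PM.chitil} verbatim, this covers all possibilities.

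There is no real obstacle here: once the mod-$2$ reduction above is in place, the corollary is a one-line consequence of the already-established Euler characteristic computation, so the bulk of the work lies entirely in Theorem~\ref{thm.PM.chitil} itself. (The same mod-$2$ reduction, combined with Theorem~\ref{thm.PF.chitil} and the remark on the case $E = \varnothing$ following it, would likewise yield the analogous parity statement for $\Pf{G}$.)
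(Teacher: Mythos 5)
Your proof is correct and matches the paper's argument exactly: both reduce $\chitil(\Delta) = \sum_{I\in\Delta}(-1)^{\#I-1}$ modulo $2$ to get $\chitil(\Delta) \equiv \#\Delta \pmod 2$, then invoke Theorem~\ref{thm.PM.chitil}. No differences worth noting.
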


\begin{corollary}
\label{cor.PF.parity} \hfill
\begin{enumerate}
    \item[(a)] If $G$ contains a useless arc or a cycle or satisfies $\tup{E = \varnothing \text{ and } s = t}$, then $\# \tup{\Pf{G}}$ is even.
    \item[(b)] Otherwise, $\# \tup{\Pf{G}}$ is odd.
\end{enumerate}
\end{corollary}

\begin{remark}
Theorem~\ref{thm.PM.chitil} and Theorem~\ref{thm.PF.chitil}
show that the reduced Euler characteristics of $\Pf{G}$ and
$\Pm{G}$ always belong to $\set{-1, 0, 1}$.
Not all combinatorially defined simplicial complexes are
this well-behaved.
For example, if we replaced directed graphs by
undirected graphs throughout our definitions,
then the same graph $G$ considered in Example~\ref{exa.PfPm1}
(but without the arrows on the arcs)
would satisfy $\chitil\tup{\Pf{G}} = 3$ and
$\chitil\tup{\Pm{G}} = -3$.
\end{remark}

\subsubsection{\label{subsect.main.fpol}The f-polynomials of $\Pf{G}$ and $\Pm{G}$}

If $\Delta$ is any simplicial complex on a ground set $W$, then we define the \emph{f-polynomial} of $\Delta$ to be the polynomial
\[
f_\Delta\tup{x} := \sum_{I \in \Delta} x^{\# I} \in \ZZ\ive{x} .
\]
An easy comparison of the definitions of $f_\Delta$ and $\chitil\tup{\Delta}$ shows that the value of this polynomial at $-1$ is
\begin{align}
f_\Delta\tup{-1} = - \chitil\tup{\Delta} .
\label{eq.fDelta.-1}
\end{align}

A \textit{quasi-cycle} of $G$ will mean a subset $F$ of $E$ such that $F$ is either
\begin{itemize}
\item the set of arcs of a cycle of $G$, or
\item a $1$-element set consisting of a single useless arc of $G$.
\end{itemize}

Theorem~\ref{thm.PM.chitil} (a) can be restated as claiming that if $G$ has a quasi-cycle, then the polynomial $f_{\Pm{G}}$ is divisible by $1+x$ (since this is tantamount to saying that $f_{\Pm{G}}\tup{-1} = - \chitil\tup{\Pm{G}} = 0$).
Theorem~\ref{thm.PF.chitil} (a) says the same about $\Pf{G}$ instead of $\Pm{G}$.
The next result is a natural strengthening of these corollaries.

\begin{theorem} \label{thm.fpols.divis}
Let $k$ be a nonnegative integer.
Assume that $G$ has at least $k$ disjoint quasi-cycles.
Then the polynomials $f_{\Pm{G}}\tup{x}$ and $f_{\Pf{G}}\tup{x}$ are divisible by $\tup{1+x}^k$.
\end{theorem}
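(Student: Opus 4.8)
The plan is to derive this from Theorem~\ref{thm.PM.chitil} via a general identity that expresses an $f$-polynomial through the reduced Euler characteristics of the links of its faces. For a finite simplicial complex $\Delta$ on a ground set $W$ and a face $A \in \Delta$, write $\lk_\Delta A := \set{B \subseteq W \setminus A \with A \cup B \in \Delta}$ for the link. Expanding $x^{\# F} = \bigl(\tup{1+x} - 1\bigr)^{\# F} = \sum_{A \subseteq F} \tup{-1}^{\# F - \# A} \tup{1+x}^{\# A}$, summing over $F \in \Delta$, and interchanging the two summations --- noting that $\sum_{F \in \Delta,\ F \supseteq A} \tup{-1}^{\# F - \# A} = \sum_{B \in \lk_\Delta A} \tup{-1}^{\# B} = -\chitil\tup{\lk_\Delta A}$ when $A \in \Delta$, while this inner sum is empty for $A \notin \Delta$ --- one gets
\[
f_\Delta\tup{x} \;=\; \sum_{A \in \Delta} \bigl(-\chitil\tup{\lk_\Delta A}\bigr)\,\tup{1+x}^{\# A} .
\]
In particular, \emph{if $\chitil\tup{\lk_\Delta A} = 0$ for every face $A$ with $\# A < k$, then $\tup{1+x}^k$ divides $f_\Delta\tup{x}$}, since then only summands with $\# A \geq k$ survive and each is divisible by $\tup{1+x}^k$.

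I would apply this with $\Delta = \Pm{G}$, the point being that the links of $\Pm{G}$ are again path-missing complexes: for $A \in \Pm{G}$ one checks straight from the definitions that $\lk_{\Pm{G}} A = \Pm{G \setminus A}$, where $G \setminus A := \tup{V, E \setminus A, s, t}$ is $G$ with the edges in $A$ removed (indeed $A \cup B \in \Pm{G}$ says $E \setminus \tup{A \cup B} = \tup{E \setminus A} \setminus B$ contains an $s$-$t$-path of $G$, and the $s$-$t$-paths of $G$ avoiding $A$ are exactly those of $G \setminus A$). Now fix $k$ pairwise disjoint quasi-cycles $C_1, \dots, C_k$ of $G$ and let $A \in \Pm{G}$ with $\# A < k$. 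Since the $C_i$ are pairwise disjoint, $A$ meets at most $\# A < k$ of them, so $A \cap C_i = \emptyset$ for some $i$; then all edges of $C_i$ survive in $G \setminus A$, and $C_i$ remains a quasi-cycle of $G \setminus A$ (a cycle stays a cycle, since all of its edges and vertices are kept; a useless edge of $G$ stays useless in $G \setminus A$, since every $s$-$t$-path of $G \setminus A$ is also one of $G$). Hence $G \setminus A$ has a cycle or a useless edge, so Theorem~\ref{thm.PM.chitil}(a) gives $\chitil\tup{\Pm{G \setminus A}} = \chitil\tup{\lk_{\Pm{G}} A} = 0$. As this holds for all faces $A$ of $\Pm{G}$ with $\# A < k$, the previous paragraph yields $\tup{1+x}^k \mid f_{\Pm{G}}\tup{x}$.

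For $\Pf{G}$ I would not redo this (it is much less clear that the links of $\Pf{G}$ are again path-free complexes --- morally because ``contracting'' the edges of a face in a directed graph need not preserve the set of $s$-$t$-paths), but pass through Alexander duality. Since $\Pm{G}$ is the Alexander dual of $\Pf{G}$, i.e.\ $F \in \Pm{G} \iff E \setminus F \notin \Pf{G}$, counting faces by complementation gives, with $n := \# E$,
\[
f_{\Pm{G}}\tup{x} \;=\; \tup{1+x}^n - x^n f_{\Pf{G}}\tup{1/x}, \qquad\text{hence}\qquad x^n f_{\Pf{G}}\tup{1/x} \;=\; \tup{1+x}^n - f_{\Pm{G}}\tup{x}.
\]
Since the $C_i$ are nonempty and pairwise disjoint, $n \geq k$, so the right-hand side --- and therefore $x^n f_{\Pf{G}}\tup{1/x}$ --- is divisible by $\tup{1+x}^k$; writing $x^n f_{\Pf{G}}\tup{1/x} = \tup{1+x}^k q\tup{x}$ with $q \in \ZZ\ive{x}$, $\deg q \leq n - k$, and substituting $x \mapsto 1/x$ (using that $1+x$ equals its own coefficient-reversal) shows $\tup{1+x}^k \mid f_{\Pf{G}}\tup{x}$ too. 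I expect the only genuine obstacle to be the initial conceptual step --- seeing that the divisibility is already packaged inside Theorem~\ref{thm.PM.chitil} via the link/Euler-characteristic identity; after that, the identity itself and the graph-theoretic facts used (quasi-cycles survive deletion of a disjoint edge set; a pigeonhole count) are routine.
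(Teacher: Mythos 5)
Your proof is correct, and it takes a genuinely different route from the paper's. The paper proves Theorem~\ref{thm.fpols.divis} by induction on $\# E$, using the deletion--contraction recurrence $f_{\Pm{G}}(x) = f_{\Pm{G/e}}(x) + x\, f_{\Pm{G\backslash e}}(x)$ (and its $\Pf{}$-analogue) for an edge $e$ with source $s$, together with Lemmas~\ref{lem.G/e.quasi1} and~\ref{lem.G/e.quasiB} tracking how quasi-cycles survive deletion and contraction, plus the cone argument of Lemma~\ref{lem.fpol.cone} in the useless-edge case. You instead derive the result from Theorem~\ref{thm.PM.chitil} via the general expansion $f_\Delta(x) = \sum_{A \in \Delta} \left(-\chitil\tup{\lk_\Delta A}\right)(1+x)^{\# A}$ (which does not appear in the paper, but is correct), combined with the observation --- an iteration of Lemma~\ref{lem:delcon}(a) --- that $\lk_{\Pm{G}} A = \Pm{G \setminus A}$ where $G \setminus A$ is $G$ with the edge set $A$ deleted. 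This is a nice conceptual point: the divisibility is precisely the vanishing of reduced Euler characteristics of all small links. Your treatment of $\Pf{G}$ via the duality formula $f_{\Delta^\vee}(x) = (1+x)^{\# W} - x^{\# W} f_\Delta(1/x)$ (which the paper only proves in a suppressed \texttt{verlong} block, but which you correctly rederive) and a coefficient-reversal argument is also sound, with the needed bound $\# E \geq k$ coming from the disjointness and nonemptiness of the quasi-cycles. The trade-offs: your argument is shorter, avoids contraction entirely (hence sidesteps all of the graph-theoretic casework in Subsection~\ref{subsect.delcont.graph}), and treats only deletion of edge sets --- a structurally simpler operation than the mixed deletion/contraction the paper performs; but it is logically downstream of Theorem~\ref{thm.PM.chitil}, which in the paper's ordering is proved only in Section~\ref{sect.euler-proof}, after Theorem~\ref{thm.fpols.divis}. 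There is no circularity (the proof of Theorem~\ref{thm.PM.chitil} does not invoke Theorem~\ref{thm.fpols.divis}), so your proof would simply need to be relocated to follow Section~\ref{sect.euler-proof}.
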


We shall prove this theorem in Section~\ref{sect.fpol-proof1}.

\subsubsection{\label{subsect.main.homtypes}Homotopy types of $\Pf{G}$ and $\Pm{G}$}

The \emph{homotopy type} of a simplicial complex is defined to be its equivalence class with respect to homotopy-equivalence.
Our next goal is to describe the homotopy types of the complexes $\Pf{G}$ and $\Pm{G}$ in terms of the structure of $G$.

First we consider $\Pm{G}$.

\begin{theorem} \label{thm.PM.homotopy}
	The path-missing complex $\Pm{G}$ has one of the following two homotopy types:
	\begin{enumerate}
		\item[(a)] If $G$ contains a useless arc or a cycle or satisfies $\tup{E = \varnothing \text{ and } s \neq t}$, then $\Pm{G}$ is contractible.
		\item[(b)] Otherwise, $\Pm{G}$ is homotopy-equivalent to a sphere of dimension $\#E - \# \NS - 1$.
	\end{enumerate}
\end{theorem}

Here and in the following, we say that a simplicial complex $\tup{W, \Delta}$ is ``homotopy-equivalent to a sphere of dimension $-1$'' if $\Delta = \set{\varnothing}$. (Such a complex is often called an \emph{irrelevant complex}, and its geometric realization is the empty space.)
Moreover, we agree to consider the empty complex $\tup{W, \varnothing}$ to be contractible.

Now we move on to $\Pf{G}$. If $s = t$, then $\Pf{G} = \varnothing$, since any subset of $E$ contains the trivial $s-t$-path in this case. If $E = \varnothing$ and $s \neq t$, then $\Pf{G} = \set{\varnothing}$. In all other cases, the homotopy type of $\Pf{G}$ is determined by the following theorem.

\begin{theorem} \label{thm.PF.homotopy}
    Assume that $E \neq \varnothing$. Then, the path-free complex $\Pf{G}$ has one of the following two homotopy types:
    \begin{enumerate}
        \item[(a)] If $G$ contains a useless arc or a cycle, then $\Pf{G}$ is contractible.
        \item[(b)] Otherwise, $\Pf{G}$ is homotopy-equivalent to a sphere of dimension $\#\NS - 2$.
    \end{enumerate}
\end{theorem}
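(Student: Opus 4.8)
\section*{Proof proposal}

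\emph{The plan} is to combine a few elementary ``simplification moves'' on the triple $(G,s,t)$ with combinatorial Alexander duality against Theorem~\ref{thm.PM.homotopy}. First I would dispose of the easy half of part~(a): if $e$ is a useless edge of $G$, then no $s$-$t$-path uses $e$, so $F \in \Pf{G}$ if and only if $F \cup \{e\} \in \Pf{G}$ for every $F \subseteq E \setminus \{e\}$; hence $\Pf{G}$ is a cone with apex $e$, and since $s \neq t$ forces $\emptyset \in \Pf{G}$ (so $\Pf{G} \neq \emptyset$), it is contractible. This leaves part~(a) only for graphs that contain a cycle but no useless edge --- and in such a graph $s$ has no incoming edge and $t$ no outgoing edge (an edge into $s$ or out of $t$ would be useless), so neither $s$ nor $t$ lies on a cycle.

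Next I would set up the moves. (i) If $e,e'$ are parallel edges (same source and target) and neither forms a $1$-edge $s$-$t$-path, then a path-swapping argument shows every minimal $s$-$t$-cut containing $e'$ also contains $e$, so the vertex $e$ is dominated by $e'$ in $\Pf{G}$; deleting a dominated vertex preserves homotopy type, so $\Pf{G} \simeq \dl_{\Pf{G}}(e) = \Pf{G \setminus e}$, and $\#V'$, together with the presence of a cycle or a useless edge, is unchanged. (ii) If $\deg^+(s) = 1$ with the unique edge $e_0 \colon s \to v$ (the sub-case $v = t$ being a base case, where $\Pf{G}$ is a full simplex, in fact $\{\emptyset\}$ in the relevant situations), then every $s$-$t$-path uses $e_0$, so $\{F \in \Pf{G} \with e_0 \notin F\}$ is a full simplex while $\{F \in \Pf{G} \with e_0 \in F\}$ is a cone over $\Pf{G'}$, where $G' = (V \setminus \{s\}, E \setminus \{e_0\}, v, t)$; these two contractible pieces are glued along $\Pf{G'}$, whence $\Pf{G} \simeq \Sigma \Pf{G'}$, the suspension, with $\#V'$ dropping by one in case~(b) and a cycle or useless edge persisting in $G'$ in case~(a). (iii) The mirror move applies when $\deg^-(t) = 1$, using $\Pf{G} = \Pf{\overleftarrow{G}}$ for the edge-reversed graph. (iv) As a bonus, if $G$ is a parallel composition $G_1 \parallel G_2$, then $\Pf{G} = \Pf{G_1} \ast \Pf{G_2}$. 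Iterating (i)--(iii), with Theorem~\ref{thm.PM.homotopy} available as inductive input, reduces both theorems to the \emph{irreducible} case: $G$ is simple, has no useless edge, and $\deg^+(s), \deg^-(t) \geq 2$.

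For irreducible $G$, combinatorial Alexander duality (recall $\Pm{G} = (\Pf{G})^{\vee}$ on the ground set $E$) turns Theorem~\ref{thm.PM.homotopy} into a computation of $\widetilde{H}_{\ast}(\Pf{G})$: it vanishes in case~(a), and equals that of $S^{\#V'-2}$ in case~(b). It then remains only to upgrade homology to homotopy type, for which it suffices to prove $\Pf{G}$ is simply connected: Hurewicz and Whitehead then give contractibility in case~(a) and the asserted sphere in case~(b) whenever $\#V'-2 \geq 2$, while the finitely many low cases $\#V' \leq 3$ (giving $S^{-1}, S^0, S^1$) can be settled by hand or by (i)--(iv). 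The main obstacle is exactly this simple-connectivity statement for irreducible $G$ --- equivalently, contractibility of $\Pf{G}$ for irreducible $G$ containing a cycle, which does genuinely occur. I expect it to require an explicit acyclic matching (discrete Morse theory) on $\Pf{G}$ with no critical cells, built around a cycle (or quasi-cycle) of $G$: pair a face $F$ with $F \triangle \{b\}$ for a cycle-edge $b$, and resolve the faces left unmatched (those $F$ that have an $s$-to-(source of $b$) path and a (target of $b$)-to-$t$ path but no $s$-$t$-path) by a secondary matching on the remaining cycle-edges. An alternative route is a refined analysis of the deletion--link decomposition $\Pf{G} = \dl_{\Pf{G}}(e) \cup_{\lk_{\Pf{G}}(e)} \st_{\Pf{G}}(e)$; for this it is probably cleanest to first prove a common generalization of Theorems~\ref{thm.PM.homotopy} and~\ref{thm.PF.homotopy} for directed graphs with several source and target vertices, so that $\lk_{\Pf{G}}(e)$ is again a complex of the same type.
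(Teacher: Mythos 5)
Your sketch is correct in its easy parts, but it contains a genuine and explicitly acknowledged gap that leaves the theorem unproved. You reduce, via moves (i)--(iii), to an ``irreducible'' case where $G$ is simple with $\deg^+(s)\ge2$ and $\deg^-(t)\ge2$, and then propose to read off the homology of $\Pf{G}$ from Theorem~\ref{thm.PM.homotopy} by combinatorial Alexander duality and to upgrade homology to homotopy type via Hurewicz and Whitehead. This upgrade requires that $\Pf{G}$ be simply connected --- and this is exactly what you say you cannot yet prove, only that you ``expect it to require an explicit acyclic matching'' or a refined deletion--link analysis. Neither is carried out, and the ``finitely many low cases $\#V'\le 3$'' that you set aside are not actually finitely many: for each $\#V'$ there are infinitely many irreducible graphs. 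So as written the argument does not close, and the hardest instance (irreducible $G$ with a cycle, where you must show contractibility) is precisely the one left open.

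The paper avoids this obstacle entirely by not restricting attention to $\deg^+(s)=1$. Your move (ii) is the right gluing picture, but the key observation you are missing is that the same decomposition $\Pf{G} = \st_{\Pf{G}}(e) \cup \dl_{\Pf{G}}(e)$, with $\dl_{\Pf{G}}(e)=\Pf{G\backslash e}$ and $\lk_{\Pf{G}}(e)=\Pf{G/e}$ (Lemma~\ref{lem:delcon_pf}), is usable for \emph{any} non-useless edge $e$ with source $s$, not only when it is the unique such edge. The deletion $\dl_{\Pf{G}}(e)$ is then no longer a full simplex, but this does not matter: the gluing lemma (Lemma~\ref{lem:gluing}) gives $\Pf{G}\simeq\Sigma\Pf{G/e}$ when $\dl_{\Pf{G}}(e)$ is contractible, and $\Pf{G}\simeq\dl_{\Pf{G}}(e)$ when $\lk_{\Pf{G}}(e)$ is contractible; the graph-theoretic Lemmas~\ref{lem.G/e.B}, \ref{lem.G/e.C}, \ref{lem.G/e.D} show that in every situation at least one of these two hypotheses holds (with the inductive hypothesis supplying the needed contractibility), so the induction on $\#E$ closes without ever needing a ``no move applies'' case. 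This is strictly simpler than the route through Alexander duality, because it proves a homotopy equivalence directly and never needs a simple-connectivity argument, discrete Morse theory, or a separate treatment of low-dimensional spheres.

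Two smaller remarks. First, move (iii) via edge reversal is fine, but note that the invariant $\#V'$ is the number of nonsinks, which under reversal becomes the number of nonsources of the original graph; you would need to justify that these coincide (they do when $G$ has no useless edges and no cycles, but this deserves a line). Second, your move (i) (domination by a parallel edge) is correct as stated, but it is not needed in the paper's induction, which is another sign that restricting to $\deg^+(s)=1$ was an unnecessary constraint on where to apply the gluing argument.
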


Both Theorems \ref{thm.PM.homotopy} and \ref{thm.PF.homotopy} will be proved in Section~\ref{sect.homtype-proof}.
The proofs will rely on \emph{discrete Morse theory}, a purely combinatorial approach to the homotopy of complexes, which yields more than just the homotopy types.
See Section~\ref{sect.dmt} for a brief introduction to discrete Morse theory, and \cite{Kozlov20} and \cite{Forman-user} for deeper background.  We end the paper in Section~\ref{sect.further} with several open questions.

\section{Simple properties}

Before we get to the proofs of the above theorems, let us explore some of the most elementary properties of the complexes $\Pf{G}$ and $\Pm{G}$.

If $\Delta$ is a simplicial complex on the ground set $W$, then the \emph{Alexander dual} $\Delta^\vee$ of $\Delta$ is defined by
\[ \Delta^\vee = \set{ F \subseteq W \with W \setminus F \notin \Delta} . \]
This is again a simplicial complex on the ground set $W$.

\begin{lemma} \label{lem.duals}
    The simplicial complexes $\Pf{G}$ and $\Pm{G}$ are Alexander duals of each other.
\end{lemma}
\begin{proof}
    This is immediate from the definitions.
\end{proof}


Given a simplicial complex $\Delta$ with ground set $W$ and an element $w \in W$, we say that $\Delta$ is a \emph{cone with apex $w$} if it has the following property:
\begin{statement}
    For any face $I$ of $\Delta$, we have $I \cup \set{w} \in \Delta$.
\end{statement}
Note that the ``dual'' property (i.e., that $I \setminus \set{w} \in \Delta$ for any $I \in \Delta$) automatically holds for any simplicial complex $\Delta$.
Thus, $\Delta$ is a cone with apex $w$ if and only if adding $w$ to any subset of $W$ does not change whether this subset is a face of $\Delta$.

\begin{lemma}\label{lem:dumbarc}
    Assume that $G$ has a useless arc $e$.
    Then, both $\Pf{G}$ and $\Pm{G}$ are cones with apex $e$.
\end{lemma}
\begin{proof}
    The arc $e$ is useless, and thus is not contained in any $s-t$-path.
    Hence, if $I$ is a subset of $E$, then any $s-t$-path contained in
    $I \cup \set{e}$ must also be contained in $I$.
    Therefore, if a subset $I$ of $E$ contains no $s-t$-path, then the
    subset $I \cup \set{e}$ contains no $s-t$-path either.
    In other words, for any $I \in \Pf{G}$ it also holds that $I \cup\set{e} \in \Pf{G}$.
    Thus, $\Pf{G}$ is a cone with apex $e$.
    A similar argument applies to $\Pm{G}$.
    (Alternatively, using Lemma~\ref{lem.duals}, we can derive this from the following general fact:
    If a simplicial complex $\Delta$ is a cone with apex $w$, then $\Delta^\vee$ is a cone with apex $w$ as well.)
\end{proof}

Let us observe a few more properties of $\Pf{G}$ and $\Pm{G}$, which will not be used in the sequel.
If $\Delta$ is a simplicial complex on ground set $W$, then the subsets of $W$ that do not belong to $\Delta$ are called the \emph{nonfaces} of $\Delta$.
A \emph{minimal nonface} of $\Delta$ means a nonface of $\Delta$ that contains no smaller nonfaces of $\Delta$ as subsets.
The \emph{codimension} of a simplicial complex $\Delta$ on ground set $W$ is defined to be $\# W - \max_{F \in \Delta} \# F$.

\begin{proposition} \label{prop.min-nonfaces} \hfill
    \begin{enumerate}
        \item[(a)] The minimal nonfaces of $\Pf{G}$ are the $s-t$-paths of $G$.
        \item[(b)] The minimal nonfaces of $\Pm{G}$ are the minimal $s-t$-cut-sets of $G$, i.e., the minimal subsets of $E$ that intersect every $s-t$-path nontrivially.    
        \item[(c)] The codimension of $\Pf{G}$ is the size of a smallest $s-t$-cut-set of $G$.
        \item[(d)] The codimension of $\Pm{G}$ is the length of a shortest $s-t$-path.
    \end{enumerate}
\end{proposition}
\begin{proof}
    Parts (a) and (b) are immediate from the definitions.
    For parts (c) and (d), recall that the facets (i.e., maximal faces) of a complex are the complements of the minimal nonfaces of its Alexander dual.
    Hence the codimension of a complex is the minimum size of a minimal nonface of its Alexander dual.
\end{proof}

Proposition~\ref{prop.min-nonfaces} (b) has the consequence that the minimum size of a nonface of $\Pm{G}$ is the minimum size of an $s-t$-cut-set of $G$. This situates the complex $\Pm{G}$ in the context of combinatorial optimization, where minimum cuts have been a long-time topic of research \cite{PicQue}, \cite{ForFul62}.
It is well-known (and easy to prove) that any minimal $s-t$-cut-set of $G$ has the form
\[
\set{e \in E : e \text{ has source in } S \text{ and target in } V \setminus S}
\]
for some subset $S \subseteq V$ that contains $s$ but not $t$. But not every $s-t$-cut-set has this form.

\section{\label{sect.delcont}Deletion-contraction for $\Pf{G}$ and $\Pm{G}$}

In preparation for the proofs, we will investigate ways to recursively disassemble simplicial complexes and graphs.
To wit, we will introduce the deletion and link operations on simplicial complexes (Subsection~\ref{subsect.delcont.dl-lk}), as well as the deletion and contraction operations on graphs (Subsection~\ref{subsect.delcont.contract}).
We will then connect the former to the latter (Subsection~\ref{subsect.delcont.PF-PM-lk-dl}), and finally study the effects of the latter on cycles and useless arcs (Subsection~\ref{subsect.delcont.graph}).

\subsection{\label{subsect.delcont.dl-lk}Deletions and links}

Given any simplicial complex $\Delta$ on the ground set $W$, we make the following definitions:

\begin{itemize}


\item If $w \in W$, then
      the \emph{deletion of $w$ from $\Delta$} is the simplicial complex
      $\dl_\Delta(w)$ on the ground set $W \setminus \set{w}$ defined by
      \[
      \dl_\Delta(w) := \set{F \subseteq W \setminus \set{w} \with F \in \Delta } .
      \]


\item If $w \in W$, then
      the \emph{link of $w$ in $\Delta$} is the simplicial complex
      $\lk_\Delta(w)$ on the ground set $W \setminus \set{w}$ defined by
      \begin{align*}
      \lk_\Delta(w)
      &:= \set{F \subseteq W \setminus \set{w} \with F \cup \set{w} \in \Delta } . 
      \end{align*}
\end{itemize}

As an exercise in working with the definitions, the simplicially innocent reader may prove the following fundamental fact.

\begin{proposition} \label{prop.star.basics}
Let $\Delta$ be a simplicial complex on ground set $W$.
Let $w \in W$. Then:

\begin{enumerate}



\item[(a)] We have $\dl_\Delta(w) = \set{F \in \Delta \with w \notin F}$.

\item[(b)] We have
\begin{align*}
\lk_\Delta(w)
&= \set{F \in \dl_\Delta(w) \with F \cup \set{w} \in \Delta } \\
&= \set{I \setminus \set{w} \with I \in \Delta \text{ and } w \in I }.
\end{align*}

\item[(c)] The map $\set{I \in \Delta \with w \in I} \to \lk_\Delta(w),\ I \mapsto I \setminus \set{w}$ is a bijection.

\end{enumerate}

\end{proposition}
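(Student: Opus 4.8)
The plan is to prove all five parts by unwinding the definitions, the only recurring ingredient being that $\Delta$ is downward closed; in particular, for any $F \subseteq W$ the hypothesis $F \cup \set{w} \in \Delta$ automatically forces $F \in \Delta$, and this lets several of the defining conditions be simplified or matched up exactly.

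For part (a): to see that $\st_\Delta(w)$ is a simplicial complex, take $F \in \st_\Delta(w)$ and $B \subseteq F$; then $B \subseteq F \in \Delta$ and $B \cup \set{w} \subseteq F \cup \set{w} \in \Delta$, so $B \in \st_\Delta(w)$. For the cone property, given $F \in \st_\Delta(w)$ we have $F \cup \set{w} \in \Delta$ by the definition of the star, and since $\tup{F \cup \set{w}} \cup \set{w} = F \cup \set{w} \in \Delta$, also $F \cup \set{w} \in \st_\Delta(w)$; this is precisely the defining property of a cone with apex $w$.

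For part (b): the containments $\dl_\Delta(w) \subseteq \Delta \supseteq \st_\Delta(w)$ are clear from the definitions. Conversely, any $F \in \Delta$ with $w \notin F$ lies in $\dl_\Delta(w)$, while any $F \in \Delta$ with $w \in F$ satisfies $F \cup \set{w} = F \in \Delta$ and hence lies in $\st_\Delta(w)$; this proves $\dl_\Delta(w) \cup \st_\Delta(w) = \Delta$. A set $F$ lies in $\dl_\Delta(w) \cap \st_\Delta(w)$ if and only if $F \subseteq W \setminus \set{w}$ and $F \cup \set{w} \in \Delta$ (the extra condition $F \in \Delta$ coming from the star being automatic by downward closure), and this is exactly the definition of $\lk_\Delta(w)$.

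Parts (c), (d), (e) are then bookkeeping. Part (c) uses only that for $F \in \Delta$ one has $F \subseteq W \setminus \set{w}$ iff $w \notin F$. For part (d): the first description of $\lk_\Delta(w)$ is its definition with the redundant condition $F \in \Delta$ inserted and $F \subseteq W \setminus \set{w}$ rewritten via (c); the second description is obtained from the mutually inverse maps $F \mapsto F \cup \set{w}$ and $I \mapsto I \setminus \set{w}$. Part (e) records that, suitably restricted, these two maps are inverse bijections between $\set{I \in \Delta \with w \in I}$ and $\lk_\Delta(w)$: injectivity of $I \mapsto I \setminus \set{w}$ holds because $I = \tup{I \setminus \set{w}} \cup \set{w}$ when $w \in I$, and surjectivity is the content of the second description in (d). The only point requiring any care — and it is the closest approximation to a difficulty here — is keeping precise track of which membership conditions are genuine and which are automatic, so that the competing set descriptions are shown to coincide on the nose rather than merely up to an obvious inclusion; no topology or deeper combinatorics enters.
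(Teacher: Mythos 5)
The paper explicitly declines to prove this proposition, offering it instead as ``an exercise in working with the definitions'' for the ``simplicially innocent reader,'' so there is no proof in the paper to compare against. Your argument is a correct direct verification from the definitions, and it handles the one genuinely non-automatic point — that the condition $F \in \Delta$ appearing in the definition of $\st_\Delta(w)$ is already forced by $F \cup \set{w} \in \Delta$ via downward closure, which is what makes the identity $\dl_\Delta(w) \cap \st_\Delta(w) = \lk_\Delta(w)$ and the first description in part (d) come out on the nose — explicitly and correctly.
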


We also invite the reader to verify the following easy fact, which says that
taking the Alexander dual of a simplicial complex turns deletions into links
and vice versa:

\begin{proposition}
\label{prop.dl-lk-dual}
Let $\Delta$ be a simplicial complex on ground set $W$.
Let $w\in W$. Then:

\begin{enumerate}
\item[(a)] We have $\left(  \dl_{\Delta}\left(  w\right)  \right)  ^{\vee
}=\lk_{\Delta^{\vee}}\left(  w\right)  $.

\item[(b)] We have $\left(  \lk_{\Delta}\left(  w\right)  \right)  ^{\vee
}=\dl_{\Delta^{\vee}}\left(  w\right)  $.
\end{enumerate}
\end{proposition}

\begin{proof}
Without loss of generality, let $W=\left\{  1,2,\ldots,n\right\}  $ for some
integer $n$. Then, both claims are particular cases of \cite[Lemma
1.1]{Floystad03}. (Note that \cite{Floystad03} refers to our $\Delta^{\vee}$,
$\dl_{\Delta}\left(  w\right)  $ and $\lk_{\Delta}\left(  w\right)  $ as
$\Delta^{\ast}$, $\Delta_{W\setminus\left\{  w\right\}  }^{\varnothing
,\left\{  w\right\}  }$ and $\Delta_{W\setminus\left\{  w\right\}  }^{\left\{
w\right\}  ,\varnothing}$, respectively. Thus, \cite[Lemma 1.1]{Floystad03}
yields Proposition \ref{prop.dl-lk-dual} (a) by setting $R=W\setminus\left\{
w\right\}  $ and $S=\varnothing$ and $T=\left\{  w\right\}  $, and yields
Proposition \ref{prop.dl-lk-dual} (b) by setting $R=W\setminus\left\{
w\right\}  $ and $S=\left\{  w\right\}  $ and $T=\varnothing$.)
\end{proof}

\subsection{\label{subsect.delcont.contract}Deletion and contraction of arcs}

If $e$ is an arc in $E$, then we shall use the following notions (see Figure~\ref{fig.G/e} for an example):
\begin{itemize}

\item We let $G \backslash e$ denote the graph obtained from $G$ by deleting the arc $e \in E$ (that is, removing $e$ from the arc set of the graph). The vertices $s$ and $t$ remain the distinguished vertices of this new graph.

\item We let $G / e$ denote the graph obtained from $G$ by contracting the arc $e$ (that is, identifying the source of $e$ with the target of $e$, and removing the arc $e$).
If the two endpoints of $e$ are $u$ and $v$, then
the two vertices $u$ and $v$ become one single vertex of $G / e$, which we denote by $u \sim v$ but will still refer to as $u$ or as $v$ (by abuse of notation);
all other vertices of $G$ remain intact in $G / e$;
and each arc of $G$ distinct from $e$ remains an arc of $G / e$ (except that if $u$ or $v$ was its source or target, then that source or target now changes to $u \sim v$).
The distinguished vertices of $G / e$ are $s$ and $t$ again, or rather the vertices resulting from $s$ and $t$ upon the identification.

\end{itemize}

Thus, both graphs $G \backslash e$ and $G / e$ have arc set $E \setminus \set{e}$.
Hence, each walk of $G$ that does not use the arc $e$ is a walk of $G \backslash e$ and a walk of $G / e$.
(But the converse is not true: A walk of $G / e$ can get ``ripped apart'' when we undo the contraction of $e$, and thus fail to be a walk of $G$. Also, a path of $G$ that does not use $e$ might fail to be a path of $G / e$ if it contains both endpoints of $e$.)

\begin{figure}[h!]
\centering
\begin{tabular}[c]{|c|c|c|}\hline
& & \\
\quad
\begin{tikzpicture}[scale=2, >=stealth']
\begin{scope}[every node/.style={circle,fill=blue!20,draw}]
\node(u) at (0, 1) {$u$};
\node(v) at (1, 1) {$v$};
\node(z) at (1, 0) {$z$};
\node(y) at (0, 0) {$y$};
\node(x) at (0.5, 2) {$x$};
\end{scope}
\begin{scope}[every edge/.style={draw=black,very thick}]
\path[->] (u) edge node[above] {$e$} (v) (u) edge (x) (v) edge (x);
\path[->] (u) edge (y) (z) edge (y) (z) edge (v);
\end{scope}
\end{tikzpicture}
\quad\phantom{i}
&
\quad
\begin{tikzpicture}[scale=2, >=stealth']
\begin{scope}[every node/.style={circle,fill=blue!20,draw}]
\node(u) at (0, 1) {$u$};
\node(v) at (1, 1) {$v$};
\node(z) at (1, 0) {$z$};
\node(y) at (0, 0) {$y$};
\node(x) at (0.5, 2) {$x$};
\end{scope}
\begin{scope}[every edge/.style={draw=black,very thick}]
\path[->] (u) edge (x) (v) edge (x);
\path[->] (u) edge (y) (z) edge (y) (z) edge (v);
\end{scope}
\end{tikzpicture}
\quad\phantom{i}
&
\quad
\begin{tikzpicture}[scale=2, >=stealth']
\begin{scope}[every node/.style={circle,fill=blue!20,draw}]
\node(uv) at (0.5, 1) {$u\sim v$};
\node(z) at (1, 0) {$z$};
\node(y) at (0, 0) {$y$};
\node(x) at (0.5, 2) {$x$};
\end{scope}
\begin{scope}[every edge/.style={draw=black,very thick}]
\path[->] (uv) edge[bend left=20] (x) (uv) edge[bend right=20] (x);
\path[->] (uv) edge (y) (z) edge (y) (z) edge (uv);
\end{scope}
\end{tikzpicture}
\quad\phantom{i}
\\
& & \\\hline
$G$ & $G\backslash e$ & $G/e$ \\\hline
\end{tabular}
\caption{A graph $G$ with an arc $e$, and the two derived graphs $G\backslash e$ and $G/e$.}
\label{fig.G/e}
\end{figure}

We shall next prove several graph-theoretical properties of $G \backslash e$ and $G / e$ that will allow us to argue recursively in the ``deletion-contraction'' paradigm.

\begin{lemma} \label{lem.G/e.stpath}
    Let $e \in E$ be an arc whose source is $s$.
    Let $I$ be a subset of $E$ that contains $e$.
    Then, $I$ contains an $s-t$-path of $G$ if and only if $I \setminus \set{e}$ contains an $s-t$-path of $G / e$.
\end{lemma}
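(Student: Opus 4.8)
The plan is to prove both directions by explicitly constructing the required path on each side. For the forward direction, suppose $I$ contains an $s$--$t$-path $p$ of $G$. Since $e$ has source $s$ and a path cannot revisit $s$, the edge $e$ is used by $p$ only if it is the very first edge of $p$. I would split into two cases. If $p$ does not use $e$, then every edge and vertex of $p$ survives in $G/e$ (the contraction only identifies $s$ with the target of $e$, and $p$ avoids the target of $e$ unless $t$ happens to equal that target), so $p$ descends to an $s$--$t$-path of $G/e$ whose edges lie in $I \setminus \set{e}$; one has to be slightly careful if the target of $e$ appears on $p$, but since $p$ starts at $s$ and the identified vertex is called $s$ in $G/e$, this still yields a walk from $s$ to $t$ in $G/e$, and then I would pass to a subpath to get an honest path. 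If $p$ does use $e$, then $p = \tup{s, e, v, e_2, v_2, \ldots, e_n, t}$ where $v$ is the target of $e$; deleting the first step gives the walk $\tup{v, e_2, v_2, \ldots, e_n, t}$, which in $G/e$ is a walk from $s \sim v$ to $t$ using only edges of $I \setminus \set{e}$, and a subpath of it is the desired path.

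For the reverse direction, suppose $I \setminus \set{e}$ contains an $s$--$t$-path $q$ of $G/e$. Lifting $q$ back to $G$: each edge of $q$ is an edge of $G$ distinct from $e$, and each internal vertex of $q$ other than the identified vertex $s \sim v$ lifts uniquely. Where $q$ passes through $s \sim v$, in $G$ this vertex could be either $s$ or $v$ (the target of $e$); so the lift of $q$ is a walk in $G$ each of whose edges starts and ends at $s$, at $v$, or at an unambiguous vertex, but a step that in $G/e$ goes "into" $s\sim v$ might in $G$ arrive at $v$ while the next step departs from $s$ (or vice versa). To repair this I would use the edge $e$ from $s$ to $v$ to bridge such a discontinuity: inserting $e$ (which lies in $I$) whenever the lifted walk jumps between $s$ and $v$ produces a genuine walk of $G$ from $s$ to $t$ using only edges of $I$, and then I extract an $s$--$t$-path from it. The cases where $q$ starts or ends at $s \sim v$ are handled the same way, noting that $q$ starts at the vertex called $s$, which lifts to $s$ directly.

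The main obstacle is the bookkeeping around the identified vertex $s \sim v$: a path in $G/e$ visits it at most once, but its preimage structure in $G$ is genuinely ambiguous, and one must check that inserting or deleting the single edge $e$ always reconciles the two sides without accidentally creating a repeated vertex before passing to a subpath. Everything else is routine translation between walks in $G$ and walks in $G/e$. A cleaner alternative I would consider is to reduce the "has an $s$--$t$-path" statements to "has an $s$--$t$-walk" statements (these are equivalent, since any walk contains a path with the same endpoints and a subset of its edges), and then work entirely with walks, where the correspondence $\tup{s,e,v,\ldots} \leftrightarrow \tup{s\sim v,\ldots}$ is a clean bijection between $e$-initial $s$--$t$-walks of $G$ with edges in $I$ and $s$--$t$-walks of $G/e$ with edges in $I\setminus\set e$, combined with the observation that an $s$--$t$-walk of $G$ with edges in $I$ and not using $e$ maps to an $s$--$t$-walk of $G/e$ with edges in $I\setminus\set e$ and conversely any such walk of $G/e$ either lifts directly or can be made to lift by inserting $e$.
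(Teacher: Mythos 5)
Your proof is correct and follows essentially the same strategy as the paper: for the forward direction, split on whether $p$ uses $e$, and if it does, delete $e$ and reduce the resulting $s$--$t$-walk of $G/e$ to a path; for the reverse direction, lift $q$ to $G$ and prepend $e$ when the lifted edges start at $s'$ rather than $s$. The one place where you make more work for yourself than needed is the ``bridging discontinuities'' framework in the reverse direction: since $q$ is an $s$--$t$-\emph{path} of $G/e$ beginning at $s \sim s'$, it visits that vertex exactly once (at the start), so there is at most one possible mismatch to fix and it can only be at the very beginning of the walk, which is exactly the observation the paper uses to give a clean two-case argument (the lift is either an $s$--$t$-path or an $s'$--$t$-path avoiding $s$).
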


We notice that Lemma~\ref{lem.G/e.stpath} would be false if we did not require that the source of $e$ is $s$: a simple counterexample is the graph $s \overset{e}{\longleftarrow} t$ with $I = E$.

\begin{proof}[Proof of Lemma~\ref{lem.G/e.stpath}.]

$\Longrightarrow:$
Assume that $I$ contains an $s-t$-path $p$ of $G$.
We must prove that $I \setminus \set{e}$ contains an $s-t$-path of $G / e$.
It suffices to show that $I \setminus \set{e}$ contains an $s-t$-walk of $G / e$ (since we can then obtain an $s-t$-path from such a walk by removing cycles).
In other words, we must find an $s-t$-walk of $G / e$ contained in $I \setminus \set{e}$.
If $p$ does not contain $e$, then $p$ is already such an $s-t$-walk (since all arcs of $G$ except for $e$ are arcs of $G / e$), and so we are done.
If, however, $p$ does contain $e$, then we can obtain an $s-t$-walk of $G / e$
by removing $e$ from $p$,
and this $s-t$-walk is clearly contained in $I \setminus \set{e}$.
This proves the ``$\Longrightarrow$'' direction of Lemma~\ref{lem.G/e.stpath}.

$\Longleftarrow:$
Assume that $I \setminus \set{e}$ contains an $s-t$-path $q$ of $G / e$.
We must prove that $I$ contains an $s-t$-path of $G$.

Let $s'$ be the target of $e$. Note that the contraction of $e$ identifies the two vertices $s$ and $s'$.
Thus, the arcs of $G / e$ starting at $s$ are (1) the arcs of $G$ starting at $s$ except for the arc $e$, and (2) the arcs of $G$ starting at $s'$ (if $s' \neq s$).

Now, let us regard the arcs on the $s-t$-path $q$ as arcs of the original graph $G$.
In this way, it is no longer guaranteed that they still form an $s-t$-path,
since $s$ and $s'$ are not necessarily identical in $G$;
however, they still form either an $s-t$-path or an $s'-t$-path (because $s$ and $s'$ are the only two vertices that get identified in $G / e$, and the $s-t$-path $q$ never revisits its starting vertex $s$ after leaving it).
If they form an $s-t$-path, then we conclude that $I$ contains an $s-t$-path of $G$ (namely, the path that they form), and thus we are done.
If they don't, then they form an $s'-t$-path $q'$ that does not contain the vertex $s$, and we can again conclude that $I$ contains an $s-t$-path of $G$ (namely, the path obtained by prepending $e$ to $q'$).
In either case, we have shown that $I$ contains an $s-t$-path of $G$.
This proves the ``$\Longleftarrow$'' direction of Lemma~\ref{lem.G/e.stpath}.
\end{proof}

\begin{lemma} \label{lem.G/e.stpath-comp}
    Let $e \in E$ be an arc whose source is $s$.
    Let $F$ be a subset of $E \setminus \set{e}$.
    Then, $E \setminus F$ contains an $s-t$-path of $G$ if and only if $\tup{E \setminus \set{e}} \setminus F$ contains an $s-t$-path of $G / e$.
\end{lemma}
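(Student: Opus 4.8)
The plan is to reduce this directly to Lemma~\ref{lem.G/e.stpath}, which has already been established. The key observation is that the hypothesis $F \subseteq E \setminus \set{e}$ forces $e \notin F$, and hence $e \in E \setminus F$. So the set $I := E \setminus F$ is a subset of $E$ that contains $e$, which is exactly the setup in which Lemma~\ref{lem.G/e.stpath} applies.

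Concretely, I would set $I := E \setminus F$ and note $e \in I$. Lemma~\ref{lem.G/e.stpath} then tells us that $I$ contains an $s-t$-path of $G$ if and only if $I \setminus \set{e}$ contains an $s-t$-path of $G / e$. It remains only to rewrite the two sides of this equivalence in terms of $F$: the left-hand side is literally ``$E \setminus F$ contains an $s-t$-path of $G$'', and for the right-hand side one computes $I \setminus \set{e} = \tup{E \setminus F} \setminus \set{e} = \tup{E \setminus \set{e}} \setminus F$ (using again that $e \notin F$, so the order of removing $e$ and $F$ does not matter). Substituting this in yields precisely the claimed equivalence.

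There is essentially no obstacle here: the statement is a reformulation of Lemma~\ref{lem.G/e.stpath} under the substitution $I \leftrightarrow E \setminus F$, with the complementation constraint on $F$ playing the role of the hypothesis $e \in I$. The only thing to be careful about is confirming that $e$ is indeed excluded from $F$ (so that $I$ is a legitimate input to Lemma~\ref{lem.G/e.stpath} and the set-difference identity holds); both follow immediately from the assumption $F \subseteq E \setminus \set{e}$. Thus the proof is a one-line application once the bookkeeping is spelled out.
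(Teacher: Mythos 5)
Your proof is correct and is essentially identical to the paper's own: both apply Lemma~\ref{lem.G/e.stpath} to $I = E \setminus F$, using that $F \subseteq E \setminus \set{e}$ forces $e \in E \setminus F$ and that $\tup{E \setminus F} \setminus \set{e} = \tup{E \setminus \set{e}} \setminus F$. No differences worth noting.
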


\begin{proof}
This follows from Lemma~\ref{lem.G/e.stpath} (applied to $I = E \setminus F$), because $e \in E \setminus F$ and $\tup{E \setminus F} \setminus \set{e} = \tup{E \setminus \set{e}} \setminus F$.
\end{proof}

\subsection{\label{subsect.delcont.PF-PM-lk-dl}Links and deletions of $\Pm{G}$ and $\Pf{G}$}

The link and the deletion of an arc $e$ in $\Pm{G}$ and $\Pf{G}$ can be described in terms of smaller graphs at least when the source of $e$ is $s$.

\begin{lemma}\label{lem:delcon}
    Let $e \in E$. Then:
    \begin{enumerate}
        \item[(a)] We always have $\lk_{\Pm{G}}(e) = \Pm{G \backslash e}$.
        \item[(b)] We have $\dl_{\Pm{G}}(e) = \Pm{G / e}$ if the source of $e$ is $s$.
    \end{enumerate}
\end{lemma}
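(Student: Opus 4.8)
The plan is to prove both parts directly from the definitions of the link, the deletion, and the two complexes, using the two graph-theoretic lemmas (Lemma~\ref{lem.G/e.stpath} and Lemma~\ref{lem.G/e.stpath-comp}) that have just been established, together with the observation that $G \backslash e$ and $G / e$ both have edge set $E \setminus \set{e}$.

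For part (a), I would unwind the definition of the link: a subset $F \subseteq E \setminus \set{e}$ lies in $\lk_{\Pm{G}}(e)$ if and only if $F \cup \set{e} \in \Pm{G}$, i.e.\ if and only if $E \setminus \tup{F \cup \set{e}} = \tup{E \setminus \set{e}} \setminus F$ contains an $s-t$-path of $G$. On the other hand, $F \in \Pm{G \backslash e}$ if and only if $\tup{E \setminus \set{e}} \setminus F$ contains an $s-t$-path of $G \backslash e$. So the claim reduces to the observation that a subset of $E \setminus \set{e}$ (which in particular does not contain $e$) contains an $s-t$-path of $G$ if and only if it contains an $s-t$-path of $G \backslash e$; but this is clear, since $G$ and $G \backslash e$ agree on all edges other than $e$, and any $s-t$-path using only edges in $E \setminus \set{e}$ is literally the same path in both graphs. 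This part requires no hypothesis on the source of $e$, which matches the statement.

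For part (b), I would likewise unwind the definition of the deletion: a subset $F \subseteq E \setminus \set{e}$ lies in $\dl_{\Pm{G}}(e)$ if and only if $F \in \Pm{G}$ (where now $F \subseteq E$ via the inclusion $E \setminus \set{e} \hookrightarrow E$), i.e.\ if and only if $E \setminus F$ contains an $s-t$-path of $G$. And $F \in \Pm{G / e}$ if and only if $\tup{E \setminus \set{e}} \setminus F$ contains an $s-t$-path of $G / e$. Now Lemma~\ref{lem.G/e.stpath-comp}, which applies precisely because the source of $e$ is $s$, says that $E \setminus F$ contains an $s-t$-path of $G$ if and only if $\tup{E \setminus \set{e}} \setminus F$ contains an $s-t$-path of $G / e$. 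Combining these three equivalences yields $\dl_{\Pm{G}}(e) = \Pm{G / e}$ as complexes on the common ground set $E \setminus \set{e}$.

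The proof is essentially a bookkeeping exercise, so the only real subtlety — the ``main obstacle'', such as it is — lies in part (b), namely keeping straight the two different ground sets ($E$ versus $E \setminus \set{e}$) and the role of the hypothesis on $e$: the equality in (b) genuinely needs the source of $e$ to be $s$, since Lemma~\ref{lem.G/e.stpath} (and hence Lemma~\ref{lem.G/e.stpath-comp}) fails otherwise, as noted in the counterexample $s \overset{e}{\longleftarrow} t$. I would make sure to cite Lemma~\ref{lem.G/e.stpath-comp} at exactly the point where this hypothesis is used, and otherwise let the definitions do the work.
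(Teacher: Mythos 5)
Your proposal is correct and matches the paper's proof essentially line for line: both parts unwind the definitions of link/deletion, use the set identity $E \setminus (F \cup \{e\}) = (E \setminus \{e\}) \setminus F$ in part (a), appeal to the elementary fact that $s$-$t$-paths of $G$ avoiding $e$ coincide with $s$-$t$-paths of $G \backslash e$, and invoke Lemma~\ref{lem.G/e.stpath-comp} at exactly the same point in part (b). Nothing to change.
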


\begin{proof}
    (a) An $s-t$-path of $G \backslash e$ is the same thing as an $s-t$-path of $G$ that does not contain $e$.
    Hence, for any subset $I$ of $E \setminus \set{e}$, we have the equivalence
    \begin{equation}
    \label{pf.lem:delcon.1}
                        \tup{I \text{ contains an $s-t$-path of $G$}} 
    \Longleftrightarrow  \tup{I \text{ contains an $s-t$-path of $G \backslash e$}}
    \end{equation}
    (since an $s-t$-path of $G$ contained in $I$ cannot use the arc $e$ anyway, and thus must be an $s-t$-path of $G \backslash e$).
    
    From the definition of $\lk_{\Pm{G}}(e)$, we have
    \begin{align*}
    \lk_{\Pm{G}}(e)
    &= \set{F \subseteq E \setminus \set{e} \with F \cup \set{e} \in \Pm{G}} \\
    &= \set{F \subseteq E \setminus \set{e} \with E \setminus \tup{F \cup \set{e}} \text{ contains an $s-t$-path of $G$}} \\
    & \qquad\qquad \tup{\text{by the definition of $\Pm{G}$}} \\
    &= \set{F \subseteq E \setminus \set{e} \with \tup{E \setminus \set{e}} \setminus F \text{ contains an $s-t$-path of $G$}} \\
    & \qquad\qquad \tup{\text{since $E \setminus \tup{F \cup \set{e}} = \tup{E \setminus \set{e}} \setminus F$ for any set $F$}} \\
    &= \set{F \subseteq E \setminus \set{e} \with \tup{E \setminus \set{e}} \setminus F \text{ contains an $s-t$-path of $G \backslash e$}} \\
    & \qquad\qquad \tup{\text{by the equivalence \eqref{pf.lem:delcon.1}, applied to $I = \tup{E \setminus \set{e}} \setminus F$}} \\
    &= \Pm{G \backslash e}
     \qquad \tup{\text{by the definition of $\Pm{G \backslash e}$}} .
    \end{align*}
    This proves Lemma~\ref{lem:delcon} (a).
    
    (b) Assume that the source of $e$ is $s$.
    From the definition of $\dl_{\Pm{G}}(e)$, we have
    \begin{align*}
    \dl_{\Pm{G}}(e)
    &= \set{F \subseteq E \setminus \set{e} \with F \in \Pm{G}} \\
    &= \set{F \subseteq E \setminus \set{e} \with E \setminus F \text{ contains an $s-t$-path of $G$}} \\
    & \qquad\qquad \tup{\text{by the definition of $\Pm{G}$}} \\
    &= \set{F \subseteq E \setminus \set{e} \with \tup{E \setminus \set{e}} \setminus F \text{ contains an $s-t$-path of $G / e$}} \\
    & \qquad\qquad \tup{\text{by Lemma~\ref{lem.G/e.stpath-comp}}} \\
    &= \Pm{G / e}
     \qquad \tup{\text{by the definition of $\Pm{G / e}$}} .
    \end{align*}
    This proves Lemma~\ref{lem:delcon} (b).
\end{proof}

The following is the $\Pf{G}$-analogue of Lemma \ref{lem:delcon}.

\begin{lemma}\label{lem:delcon_pf}
	Let $e \in E$. Then:
	\begin{enumerate}
		\item[(a)] We always have $\dl_{\Pf{G}}(e) = \Pf{G \backslash e}$.
		\item[(b)] We have $\lk_{\Pf{G}}(e) = \Pf{G / e}$ if the source of $e$ is $s$.
	\end{enumerate}
\end{lemma}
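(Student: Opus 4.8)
The plan is to mirror the proof of Lemma~\ref{lem:delcon}, interchanging the roles of the two complexes (and hence of deletion and link): since $\Pf{G}$ and $\Pm{G}$ are Alexander duals, the defining condition "$E \setminus F$ contains an $s-t$-path" of $\Pm{G}$ becomes the condition "$F$ contains no $s-t$-path" of $\Pf{G}$, and deletion and link swap under Alexander duality. So each of (a) and (b) should reduce to one already-established equivalence, just used in its contrapositive form.

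For part (a), I would start from the definition $\dl_{\Pf{G}}(e) = \set{F \subseteq E \setminus \set{e} \with F \in \Pf{G}}$, which by the definition of $\Pf{G}$ equals $\set{F \subseteq E \setminus \set{e} \with F \text{ contains no $s-t$-path of $G$}}$. The equivalence \eqref{pf.lem:delcon.1}, which holds for \emph{every} edge $e$ with no hypothesis on its source, says that a subset $I$ of $E \setminus \set{e}$ contains an $s-t$-path of $G$ if and only if it contains an $s-t$-path of $G \backslash e$. Negating this equivalence and applying it with $I = F$ rewrites the displayed set as $\set{F \subseteq E \setminus \set{e} \with F \text{ contains no $s-t$-path of $G \backslash e$}} = \Pf{G \backslash e}$.

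For part (b), assuming the source of $e$ is $s$, I would start from $\lk_{\Pf{G}}(e) = \set{F \subseteq E \setminus \set{e} \with F \cup \set{e} \in \Pf{G}}$, which by the definition of $\Pf{G}$ is $\set{F \subseteq E \setminus \set{e} \with F \cup \set{e} \text{ contains no $s-t$-path of $G$}}$. Since $e \in F \cup \set{e}$, Lemma~\ref{lem.G/e.stpath} (applied with $I = F \cup \set{e}$) gives that $F \cup \set{e}$ contains an $s-t$-path of $G$ if and only if $\tup{F \cup \set{e}} \setminus \set{e} = F$ contains an $s-t$-path of $G / e$ (here we use $e \notin F$). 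Negating this equivalence rewrites the displayed set as $\set{F \subseteq E \setminus \set{e} \with F \text{ contains no $s-t$-path of $G / e$}} = \Pf{G / e}$, as desired.

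An alternative route I would keep in mind is to deduce the whole lemma from Lemma~\ref{lem:delcon} together with Lemma~\ref{lem.duals} and the general identities $\dl_{\Delta^\vee}(w) = \tup{\lk_\Delta(w)}^\vee$ and $\lk_{\Delta^\vee}(w) = \tup{\dl_\Delta(w)}^\vee$ (with the duals on the right formed over the ground set $W \setminus \set{w}$), but the direct computation above is shorter and self-contained. I do not expect a genuine obstacle here; the only point requiring care is that the hypothesis "the source of $e$ is $s$" is needed in part (b) — because it is needed in Lemma~\ref{lem.G/e.stpath}, whose failure without it is witnessed by the counterexample $s \overset{e}{\longleftarrow} t$ — but is not needed in part (a), exactly paralleling the situation for $\Pm{G}$ in Lemma~\ref{lem:delcon}.
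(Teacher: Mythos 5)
Your proof is correct and matches the paper's argument step for step: part (a) is the negated form of the equivalence \eqref{pf.lem:delcon.1} applied with $I = F$, and part (b) is Lemma~\ref{lem.G/e.stpath} applied with $I = F \cup \set{e}$, negated, using $\tup{F \cup \set{e}} \setminus \set{e} = F$. The alternative route via Alexander duality that you mention in passing is indeed viable but is not what the paper does.
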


\begin{proof}[First proof of Lemma~\ref{lem:delcon_pf}.]
    (a) For any subset $I$ of $E \setminus \set{e}$, we have the equivalence
    \begin{equation}
    \label{pf.lem:delcon_pf.1}
                        \tup{I \text{ contains no $s-t$-path of $G$}} 
    \Longleftrightarrow  \tup{I \text{ contains no $s-t$-path of $G \backslash e$}} .
    \end{equation}
    (Indeed, this follows from the equivalence \eqref{pf.lem:delcon.1} by negating both sides.)
    
    From the definition of $\dl_{\Pf{G}}(e)$, we have
    \begin{align*}
    \dl_{\Pf{G}}(e)
    &= \set{F \subseteq E \setminus \set{e} \with F \in \Pf{G}} \\
    &= \set{F \subseteq E \setminus \set{e} \with F \text{ contains no $s-t$-path of $G$}} \\
    & \qquad\qquad \tup{\text{by the definition of $\Pf{G}$}} \\
    &= \set{F \subseteq E \setminus \set{e} \with F \text{ contains no $s-t$-path of $G \backslash e$}} \\
    & \qquad\qquad \tup{\text{by the equivalence \eqref{pf.lem:delcon_pf.1}, applied to $I=F$}} \\
    &= \Pf{G \backslash e}
     \qquad \tup{\text{by the definition of $\Pf{G \backslash e}$}} .
    \end{align*}
    This proves Lemma~\ref{lem:delcon_pf} (a).
    
    (b) Assume that the source of $e$ is $s$.
    Let $F$ be a subset of $E \setminus \set{e}$. Hence,
    $F \cup \set{e}$ is a subset of $E$ that contains $e$.
    Thus, Lemma~\ref{lem.G/e.stpath} (applied to $I = F \cup \set{e}$)
    shows that $F \cup \set{e}$ contains an $s-t$-path of $G$ if and
    only if $\tup{F \cup \set{e}} \setminus \set{e}$ contains an $s-t$-path of $G / e$.
    Thus, we have the equivalence
    \begin{align*}
    & \tup{F \cup \set{e} \text{ contains an $s-t$-path of $G$}} 
    \\
    &\Longleftrightarrow  \tup{\tup{F \cup \set{e}} \setminus \set{e} \text{ contains an $s-t$-path of $G / e$}} \\
    &\Longleftrightarrow  \tup{F \text{ contains an $s-t$-path of $G / e$}}
    \end{align*}
    (since $\tup{F \cup \set{e}} \setminus \set{e} = F$).
    Negating both sides of this, we obtain the equivalence
    \begin{align}
    &                   \tup{F \cup \set{e} \text{ contains no $s-t$-path of $G$}} \nonumber\\
    &    \Longleftrightarrow  \tup{F \text{ contains no $s-t$-path of $G / e$}} .
    \label{pf.lem:delcon_pf.2}
    \end{align}

    Forget that we fixed $F$.
    We thus have proved the equivalence \eqref{pf.lem:delcon_pf.2}
    for any subset $F$ of $E \setminus \set{e}$.

    From the definition of $\lk_{\Pf{G}}(e)$, we have
    \begin{align*}
    \lk_{\Pf{G}}(e)
    &= \set{F \subseteq E \setminus \set{e} \with F \cup \set{e} \in \Pf{G}} \\
    &= \set{F \subseteq E \setminus \set{e} \with F \cup \set{e} \text{ contains no $s-t$-path of $G$}} \\
    & \qquad\qquad \tup{\text{by the definition of $\Pf{G}$}} \\
    &= \set{F \subseteq E \setminus \set{e} \with F \text{ contains no $s-t$-path of $G / e$}} \\
    & \qquad\qquad \tup{\text{by the equivalence \eqref{pf.lem:delcon_pf.2}}} \\
    &= \Pf{G / e}
     \qquad \tup{\text{by the definition of $\Pf{G / e}$}} .
    \end{align*}
    This proves Lemma~\ref{lem:delcon_pf} (b).
\end{proof}

\begin{proof}[Second proof of Lemma~\ref{lem:delcon_pf}.]
Lemma~\ref{lem:delcon_pf} can also
be derived from Lemma \ref{lem:delcon} using Lemma \ref{lem.duals}
and Proposition \ref{prop.dl-lk-dual}. For example, let us prove part (a).
Lemma \ref{lem.duals} yields $\Pf{G}  =\left(
\Pm{G}  \right)  ^{\vee}$. Thus,
\begin{equation}
\dl_{\Pf{G}  }(e) = \dl_{\left(  \Pm{G} \right)  ^{\vee}}(e)
= \left(  \lk_{\Pm{G} }\left(  e\right)  \right)  ^{\vee},
\label{pf.lem:delcon_pf.2nd.1}
\end{equation}
since Proposition \ref{prop.dl-lk-dual} (b) yields
$\left(  \lk_{\Pm{G}  }\left(  e\right)  \right)  ^{\vee}
= \dl_{\left( \Pm{G}  \right)  ^{\vee}}(e)$.
But Lemma~\ref{lem:delcon} (a) yields $\lk_{\Pm{G} }(e)
= \Pm{G\backslash e}$.
Hence, \eqref{pf.lem:delcon_pf.2nd.1} rewrites as
\[
\dl_{\Pf{G}  }(e)
= \left(  \Pm{G\backslash e} \right)  ^{\vee}
= \Pf{G\backslash e}
\]
(by Lemma \ref{lem.duals}, applied to $G\backslash e$ instead of $G$). Thus,
Lemma \ref{lem:delcon_pf} (a) is proved. Similarly, we obtain part (b) from
Lemma \ref{lem:delcon} (b) using Proposition \ref{prop.dl-lk-dual} (a).
\end{proof}

\subsection{\label{subsect.delcont.graph}Graph-theoretical properties of deletion and contraction}

The next lemmas discuss how deletion and contraction of a single arc affect various properties of our graph, such as the existence of useless arcs and cycles and the number of nonsinks.

\begin{lemma}\label{lem:tgt-s-useless}
    Any arc of $G$ that has target $s$ is useless.
\end{lemma}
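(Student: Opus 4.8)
The plan is to argue by contradiction straight from the definition of a path. Suppose, for the sake of contradiction, that $e$ is an edge of $G$ with target $s$ that is \emph{not} useless; then $e$ lies on some $s$--$t$-path $p$ of $G$. Write $p = \tup{v_0, e_1, v_1, e_2, v_2, \ldots, e_n, v_n}$, so that $v_0 = s$ and $v_n = t$. Since $e$ is an edge of $p$, we have $e = e_i$ for some $i \in \set{1, 2, \ldots, n}$; in particular $n \geq 1$. By the definition of a walk, the target of $e_i$ is $v_i$, and by assumption the target of $e$ is $s$; hence $v_i = s = v_0$.

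Now invoke the defining property of a path: $p$ visits no vertex more than once, so the vertices $v_0, v_1, \ldots, v_n$ are pairwise distinct. Since $i \geq 1 > 0$, the equality $v_i = v_0$ contradicts this distinctness. Therefore no $s$--$t$-path of $G$ contains $e$, which is exactly the statement that $e$ is useless.

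The one point worth flagging is the role of the starting vertex: an edge with target $s$ would force the path $p$ to return to $s$ at step $i \geq 1$, and it is precisely the requirement that a path never revisits any vertex — including the one it started at — that is being used. This also subsumes the degenerate case $s = t$: there, the only $s$--$t$-path is the trivial one with $n = 0$, which contains no edges at all, so again no edge with target $s$ can lie on it. I do not anticipate any real obstacle here; once the definition of ``path'' is unpacked, the claim is immediate.
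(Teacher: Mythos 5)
Your proof is correct and is essentially the same argument the paper uses, just spelled out in more detail: the paper's one-line proof ("an edge with target $s$ cannot appear in an $s$-$t$-path") is exactly the observation you unpack via the no-repeated-vertex property of paths. Nothing to flag.
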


\begin{proof}
An arc with target $s$ cannot appear in an $s-t$-path.
Thus, it must be useless.
\end{proof}

\begin{lemma}\label{lem:induction_step_1}
	Let $e \in E$ be an arc whose source is $s$.
    Let $s'$ be the target of $e$.
    Assume that $e$ is not the only arc with target $s'$.
    Then, $G / e$ has a useless arc.
\end{lemma}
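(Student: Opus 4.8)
The plan is to exhibit an explicit useless edge of $G / e$ by invoking Lemma~\ref{lem:tgt-s-useless}, which says that in any graph every edge whose target is the distinguished vertex $s$ is useless. So it suffices to produce an edge of $G / e$ whose target is the vertex playing the role of $s$ in $G / e$.

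First I would use the hypothesis that $e$ is not the only edge with target $s'$ to pick an edge $e' \in E$ with $e' \neq e$ whose target in $G$ is $s'$. Since $e' \neq e$, the edge $e'$ survives the contraction, i.e., $e'$ is an edge of $G / e$ (recall that every edge of $G$ other than $e$ is an edge of $G / e$). Next I would observe that, because the source of $e$ is $s$ and its target is $s'$, the contraction $G / e$ identifies the vertices $s$ and $s'$, and the resulting vertex $s \sim s'$ is precisely the distinguished vertex $s$ of $G / e$. Hence the target of $e'$ in $G / e$, being the image of $s'$ under this identification, equals the distinguished vertex $s$ of $G / e$. Applying Lemma~\ref{lem:tgt-s-useless} to the graph $G / e$ and its edge $e'$ then shows that $e'$ is useless in $G / e$; in particular, $G / e$ has a useless edge.

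There is essentially no hard step here; the only thing needing a moment's care is the bookkeeping of which vertex is called $s$ in $G / e$ after the identification. I would also note that the degenerate case $s = s'$ (that is, $e$ a self-loop at $s$) causes no difficulty: in that case the contraction merely deletes $e$, and the chosen edge $e'$ already has target $s$ in $G$, hence also in $G / e$, so Lemma~\ref{lem:tgt-s-useless} applies verbatim.
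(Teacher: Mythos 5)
Your proof is correct and follows exactly the same route as the paper: pick another edge with target $s'$, note that contraction identifies $s'$ with $s$, and invoke Lemma~\ref{lem:tgt-s-useless} in $G/e$. The extra remark about the self-loop case $s = s'$ is a nice bit of care but not logically necessary, since the argument covers it uniformly.
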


\begin{proof}
We assumed that $e$ is not the only arc with target $s'$.
Thus, there is another arc $f \neq e$ with target $s'$.
In the graph $G / e$, this arc $f$ has target $s$ (since its target $s'$ is identified with $s$ in $G / e$) and thus is useless (by Lemma~\ref{lem:tgt-s-useless}, applied to $G / e$ instead of $G$).
This proves Lemma~\ref{lem:induction_step_1}.
\end{proof}

\begin{lemma}\label{lem:induction_step_2a}
	Let $e \in E$ be an arc whose source is $s$.
    Assume that $e$ is not useless.
    Let $s'$ be the target of $e$.
	Assume further that $s' \neq t$, and that $e$ is the only arc with target $s'$.
    Then, $G \backslash e$ has a useless arc.
\end{lemma}

\begin{proof}
The arc $e$ is not useless; thus, there exists an $s-t$-path that contains $e$.
This $s-t$-path cannot end at $s'$ (since $s' \neq t$), and thus its last arc cannot be $e$.
Hence, there exists an arc that follows $e$ in this $s-t$-path.
Let $e'$ be this arc. Clearly, $e'$ has source $s'$.


We have assumed that $e$ is the only arc with target $s'$.
Hence, there is no arc of $G \backslash e$ that has target $s'$.

We shall show that $e'$ is useless in $G \backslash e$.
Indeed, assume the contrary. Thus, there exists some $s-t$-path of $G \backslash e$ that uses this arc $e'$.
This path must pass through the vertex $s'$ (since the arc $e'$ has source $s'$),
and thus must start at $s'$
(since there is no arc of $G \backslash e$ that has target $s'$).
Since it is an $s-t$-path, this means that $s' = s$.
Hence, the arc $e$ has target $s$ (since it has target $s'$),
and thus is useless in $G$ (by Lemma~\ref{lem:tgt-s-useless}).
But this contradicts the fact that $e$ is not useless in $G$.
This contradiction shows that our assumption was false.
Hence, $G \backslash e$ has a useless arc (namely, $e'$).
This proves Lemma~\ref{lem:induction_step_2a}.
\end{proof}

\begin{lemma}\label{lem:induction_step_2b}
	Let $e \in E$ be an arc whose source is $s$.
    Assume that $e$ is not useless.
    Let $s'$ be the target of $e$.
	Assume further that $\# E > 1$, and that $e$ is the only arc with target $s'$.
    Then, $G \backslash e$ has a useless arc.
\end{lemma}

\begin{proof}
If $s' \neq t$, then this follows from
Lemma~\ref{lem:induction_step_2a}.
Thus, for the rest of this proof, we WLOG assume that $s' = t$.

The arc $e$ is not useless; thus, it belongs to an $s-t$-path.
This $s-t$-path thus contains at least one arc.
Therefore, $s \neq t$.

But recall that $e$ is the only arc with target $s'$.
In other words, $e$ is the only arc with target $t$
(since $s' = t$).
Hence, the graph $G \backslash e$ has no arc with target $t$.
Therefore, the graph $G \backslash e$ has no $s-t$-path
(because $s \neq t$, so any $s-t$-path would have to end with an arc with target $t$).
Therefore, any arc of $G \backslash e$ is useless.

We have $\# E > 1$; hence, the set $E \setminus \set{e}$ is nonempty.
In other words, $G \backslash e$ has an arc (since
$E \setminus \set{e}$ is the arc set of $G \backslash e$).
Hence, $G \backslash e$ has a useless arc
(since any arc of $G \backslash e$ is useless).
Lemma~\ref{lem:induction_step_2b} is thus proved.
\end{proof}

\begin{lemma} \label{lem.G/e.basics}
    Let $e \in E$ be an arc whose source is $s$.
    Let $s'$ be the target of $e$.
    Assume that $G$ has no useless arcs.
    Then:
    
    \begin{enumerate}
    
    
    \item[(a)] The graph $G$ has no cycle containing $s$.
    
    \item[(b)] We have $s' \neq s$.
    

    \item[(c)] If $s' \neq t$, then there exists an arc
               of $G \backslash e$ whose source is $s'$.
    
    \item[(d)] If $s' \neq t$, then the graph $G / e$
               has exactly one fewer nonsink than $G$.
        

	
	\item[(e)] If $e$ is the only arc of $G$ with target $s'$,
               and if $\# E > 1$, then $s' \neq t$.
    
    \end{enumerate}
\end{lemma}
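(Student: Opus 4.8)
The plan is to prove the four parts in order, since each builds on the previous ones.

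\textbf{Part (a).} Suppose for contradiction that $G$ has a cycle $C$ containing $s$. I claim that $G$ then has a useless edge, contradicting the hypothesis. The idea is that an edge $e$ with source $s$ that belongs to no $s-t$-path would already be useless; so it suffices to find such an edge, or more directly, to find any useless edge. Actually the cleanest route: since $s$ lies on a cycle, there is an edge $f$ on $C$ whose \emph{target} is $s$. By Lemma~\ref{lem:tgt-s-useless}, $f$ is useless. This contradicts the assumption that $G$ has no useless edges, so no such cycle exists.

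\textbf{Part (b).} If $s' = s$, then $e$ is an edge from $s$ to $s$, i.e., a self-loop. The excerpt already notes that any self-loop is a useless edge; this contradicts the hypothesis. (Alternatively: an edge from $s$ to $s$ has target $s$, so Lemma~\ref{lem:tgt-s-useless} applies.) Hence $s' \neq s$.

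\textbf{Part (c).} Assume $s' \neq t$. Since $G$ has no useless edges, in particular $e$ is not useless, so $e$ lies on some $s-t$-path $p$. Because $p$ ends at $t \neq s'$, the edge $e$ is not the last edge of $p$; let $e'$ be the edge immediately following $e$ on $p$. Then $e'$ has source equal to the target of $e$, namely $s'$. It remains to see that $e' \neq e$, so that $e'$ survives in $G \backslash e$: indeed $e'$ has source $s'$ while $e$ has source $s$, and $s' \neq s$ by part (b), so $e' \neq e$. Thus $e'$ is an edge of $G \backslash e$ with source $s'$.

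\textbf{Part (d).} Assume $s' \neq t$. Recall that $G/e$ has edge set $E \setminus \set{e}$, that the vertices of $G/e$ are those of $G$ with $s$ and $s'$ merged into a single vertex (call it $s \sim s'$), and that an edge $f \neq e$ keeps the same source in $G/e$ unless that source was $s$ or $s'$, in which case it becomes $s \sim s'$. The plan is to exhibit an explicit bijection between the nonsinks of $G$ and the nonsinks of $G/e$ together with one extra element. First observe that $s$ is a nonsink of $G$ (it is the source of $e$) and that $s'$ is a nonsink of $G$: by part (c) there is an edge with source $s'$, which is an edge of $G$ as well. Next, $s \sim s'$ is a nonsink of $G/e$, since the edge $e'$ from part (c) has source $s'$, hence source $s \sim s'$ in $G/e$, and $e' \neq e$ so $e'$ is an edge of $G/e$. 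Now I claim the map sending the merged vertex $s \sim s'$ to \emph{nothing} and sending every other vertex $v$ of $G$ to itself gives a bijection between $\set{\text{nonsinks of } G} \setminus \set{s, s'}$ and $\set{\text{nonsinks of } G/e} \setminus \set{s \sim s'}$: a vertex $v \notin \set{s,s'}$ is the source of some edge $f$ in $G$ iff it is the source of that same edge in $G/e$ (the source of $f$ is unchanged since it is neither $s$ nor $s'$, and moreover $f \neq e$ because $e$ has source $s$), so $v$ is a nonsink of $G$ iff it is a nonsink of $G/e$. Combining: the nonsinks of $G$ are exactly $\set{s, s'}$ together with the nonsinks of $G$ outside $\set{s,s'}$, while the nonsinks of $G/e$ are exactly $\set{s \sim s'}$ together with the same set; hence $G/e$ has exactly one fewer nonsink than $G$.

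\textbf{Main obstacle.} The only delicate point is the bookkeeping in part (d): one must be careful that (i) no edge other than $e$ is destroyed by the contraction, (ii) the sources of edges not incident to $s$ or $s'$ are genuinely unchanged, and (iii) the two nonsinks $s, s'$ of $G$ collapse to the single nonsink $s \sim s'$ of $G/e$ — which crucially requires both $s$ and $s'$ to actually be nonsinks of $G$, and this is where parts (b) and (c) and the no-useless-edges hypothesis are used. Everything else is a routine unwinding of the definitions of deletion, contraction, and useless edge.
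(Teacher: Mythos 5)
Your proof is correct and follows essentially the same approach as the paper's: part (a) uses the edge-with-target-$s$-is-useless observation, part (b) uses the self-loop-is-useless observation, part (c) extracts the successor edge on an $s$-$t$-path through $e$, and part (d) merges the two distinct nonsinks $s$ and $s'$ into the single nonsink $s \sim s'$ while every other vertex keeps its nonsink status. The bookkeeping in part (d) is handled carefully and matches the paper's argument.
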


\begin{proof}

(a) Any cycle containing $s$ would contain an arc with target $s$.  By Lemma~\ref{lem:tgt-s-useless}, such an arc would be useless.
This contradicts the assumption that $G$ has no useless arcs.  Thus, $G$ has no cycle containing $s$.  This proves Lemma~\ref{lem.G/e.basics} (a).

(b) If we had $s' = s$, then the arc $e$ would be a self-loop and therefore useless (since every self-loop is useless). But this is impossible (since $G$ has no useless arcs).
Hence, Lemma~\ref{lem.G/e.basics} (b) is proven.  


(c) Assume that $s' \neq t$.
The arc $e$ of $G$ is not useless (since $G$ has no useless arcs),
and thus is contained in an $s-t$-path of $G$.
This path cannot end with $e$ (since $s' \neq t$);
thus, there exists an arc that follows $e$ in this path.
Let $f$ be this arc.
Then, the source of $f$ is $s'$,
which is distinct from $s$ (since Lemma~\ref{lem.G/e.basics} (b)
yields $s' \neq s$).
Hence, $f \neq e$ (since the source of $e$ is $s$).
Thus, $f$ is also an arc of $G \backslash e$.
Hence, there exists an arc
of $G \backslash e$ whose source is $s'$ (namely, $f$).
This proves Lemma~\ref{lem.G/e.basics} (c).

(d) Assume that $s' \neq t$.
Thus, Lemma~\ref{lem.G/e.basics} (c) shows that
there exists an arc of $G \backslash e$ whose source is $s'$.
Let $f$ be this arc.
Then, $f$ is an arc of $G \backslash e$, thus also an
arc of $G / e$ and of $G$.
Hence, $G$ has an arc with source $s'$ (namely, $f$).
In other words, $s'$ is a nonsink of $G$.
Also, $s$ is a nonsink of $G$ (since the arc $e$ has source $s$).
The two nonsinks $s$ and $s'$ are distinct (by
Lemma~\ref{lem.G/e.basics} (b)).
In the graph $G / e$, these two nonsinks become identical, but still
remain a nonsink of $G / e$ (since the arc $f$ has source $s'$ and
is an arc of $G / e$).
For any vertex $v \in V \setminus \set{s,s'}$, it is clear that
$v$ is a nonsink of $G / e$ if and only if $v$ is a nonsink of $G$
(since the arcs of $G / e$ with source $v$ are exactly the arcs of $G$
with source $v$).
Thus, when passing from $G$ to $G / e$, we only lose one nonsink (since the
two nonsinks $s$ and $s'$ become identical).
This proves Lemma~\ref{lem.G/e.basics} (d).

(e) Assume that $e$ is the only arc of $G$ with target $s'$,
and that $\# E > 1$. We must prove that $s' \neq t$.

Assume the contrary. Thus, $s' = t$.
Hence, $e$ is the only arc of $G$ with target $t$
(since $e$ is the only arc of $G$ with target $s'$).

Since $\# E > 1$, there exists at least one arc $f \in E$
distinct from $e$. This arc $f$ cannot be useless (since $G$
has no useless arcs).
In other words, $f$ is contained in an $s-t$-path $p$ of $G$.
Consider this path $p$.

This path $p$ has at least one arc (since it contains $f$).
Thus, it has a last arc. This last arc of $p$ must have
target $t$ (since $p$ is an $s-t$-path), and thus must be $e$
(since $e$ is the only arc of $G$ with target $t$).
Therefore, the second-to-last vertex of $p$ is the source of $e$.
In other words, the second-to-last vertex of $p$ is $s$
(since the source of $e$ is $s$).
Of course, the first vertex of $p$ is $s$, too (since $p$ is an $s-t$-path).

We know that both $e$ and $f$ are arcs of $p$.
Since $f$ is distinct from $e$, this entails that $p$ has at
least two arcs, thus at least three vertices.
Hence, the first vertex and the second-to-last vertex of $p$ are distinct
(since the vertices of a path are distinct).
But this contradicts the fact that both of these vertices are $s$.
This contradiction shows that our assumption was false.
Lemma~\ref{lem.G/e.basics} (e) is proved.
\end{proof}

\begin{lemma} \label{lem.G/e.B}
    Let $e \in E$ be an arc whose source is $s$.
    Assume that $G$ has a cycle but no useless arcs.
    Then, both graphs $G \backslash e$ and $G / e$ have cycles.
\end{lemma}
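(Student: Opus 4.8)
The plan is to deduce both claims from Lemma~\ref{lem.G/e.basics}~(a), which (since $G$ has no useless edges) tells us that no cycle of $G$ passes through the vertex $s$. So I would fix a cycle $C$ of $G$ --- it exists by hypothesis, and by the above it does not visit $s$. In particular $e$ is not an edge of $C$, since $e$ has source $s$ while every edge of $C$ has both of its endpoints among the vertices of $C$, none of which is $s$. This already settles the $G \backslash e$ case: deleting $e$ changes neither the vertices nor the edges of $C$, so $C$ remains a cycle of $G \backslash e$.

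For $G / e$ the point is that contracting $e$ cannot destroy $C$ either. Let $s'$ be the target of $e$; contraction identifies $s$ with $s'$, removes $e$, and leaves all other vertices and all other edges untouched. Regarding the edges of $C$ (all of which are distinct from $e$) as edges of $G / e$, and writing $v_0, v_1, \ldots, v_m = v_0$ for the vertices of $C$ with $v_0, \ldots, v_{m-1}$ distinct, I would observe that none of the $v_i$ equals $s$, so at most one of $v_0, \ldots, v_{m-1}$ equals $s'$. Hence the identification of $s$ with $s'$ keeps the images of $v_0, \ldots, v_{m-1}$ pairwise distinct, and the edge sequence of $C$ therefore describes a cycle of $G / e$, as desired.

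The only step calling for a moment's care is this last one: one must check that the identification of $s$ with $s'$ does not collapse two distinct vertices of $C$ into a single vertex (which would violate the ``distinct vertices'' requirement in the definition of a cycle), and the resolution is exactly that $s \notin C$ forces $s'$ to occur at most once along $C$. So I do not anticipate a genuine obstacle here --- the lemma is essentially an immediate corollary of Lemma~\ref{lem.G/e.basics}~(a).
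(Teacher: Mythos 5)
Your proof is correct and takes essentially the same approach as the paper: both invoke Lemma~\ref{lem.G/e.basics}~(a) to conclude that the given cycle of $G$ avoids $s$ and hence avoids $e$, and then observe that it survives both deletion and contraction of $e$. The only (harmless) difference is in the contraction case: the paper merely notes that the cycle becomes a nontrivial closed walk of $G/e$ and hence contains a cycle, whereas you check directly that the identification $s \sim s'$ does not collapse any two of its vertices (since $s$ does not occur on it), so the cycle itself remains a cycle of $G/e$ --- a slightly stronger conclusion obtained by a slightly more explicit check, but the underlying idea is identical.
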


\begin{proof}
We have assumed that $G$ has a cycle.
This cycle cannot contain $s$ (by Lemma~\ref{lem.G/e.basics} (a)), and thus cannot contain the arc $e$.
Hence, this cycle is a cycle of $G \backslash e$ and also a nontrivial closed walk of $G / e$.
Thus, both graphs $G \backslash e$ and $G / e$ have nontrivial closed walks,
and therefore have cycles.
This proves Lemma~\ref{lem.G/e.B}.
\end{proof}

\begin{lemma} \label{lem.G-e.C}
    Let $e \in E$ be an arc whose source is $s$.
	Let $s'$ be the target of $e$.
	Assume that the graph $G \backslash e$ has no cycles,
	but has an $s-s'$-path.
	Let $f$ be a useless arc of $G \backslash e$.
	Then, $f$ is also a useless arc of $G$.
\end{lemma}

\begin{proof}
Assume the contrary. Thus, $f$ is not useless as an arc of $G$.
In other words, $G$ has an $s-t$-path $p$ that uses this arc $f$.
Consider this path $p$.

But $f$ is useless as an arc of $G \backslash e$.
Hence, $f$ belongs to no $s-t$-path of $G \backslash e$.
Note that $f$ is distinct from $e$, since $f$ is an arc of
$G \backslash e$.

The $s-t$-path $p$ uses the arc $f$, and thus also uses the
arc $e$ (since otherwise, $p$ would be an
$s-t$-path of $G \backslash e$, which would contradict the fact
that $f$ belongs to no $s-t$-path of $G \backslash e$).

We assumed that $G \backslash e$ has an $s-s'$-path.
Let us denote this path by $q$.
Replacing the arc $e$ by the path $q$ in the $s-t$-path $p$,
we obtain an $s-t$-walk of $G \backslash e$.
This walk must be a path (since otherwise, it would contain a cycle,
but $G \backslash e$ has no cycles), and thus is an $s-t$-path;
furthermore, it contains $f$ (since $p$ contains $f$,
and $f$ survives the replacement of $e$ by $q$ because $f$ is
distinct from $e$).
Thus, it is an $s-t$-path of $G \backslash e$ that contains $f$.
This contradicts the fact that
$f$ belongs to no $s-t$-path of $G \backslash e$.
This contradiction completes our proof.
\end{proof}

\begin{lemma} \label{lem.G/e.C}
    Let $e \in E$ be an arc whose source is $s$.
    Assume that $G$ has no cycles and no useless arcs.
    Assume further that $G \backslash e$ has a useless arc.
    Then, the graph $G / e$ has no cycles and no useless arcs.
\end{lemma}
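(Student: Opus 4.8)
The argument will rest on a single claim: \emph{the vertex $s'$ (the target of $e$) is not reachable from $s$ by a directed path in $G \backslash e$.} I would first dispose of degenerate cases. We have $s' \neq s$ by Lemma~\ref{lem.G/e.basics}(b); we have $s \neq t$, for otherwise the only $s-t$-path is the trivial one, and then $e$ would be useless in $G$; and we have $s' \neq t$, for otherwise every $s-t$-path of $G$ using $e$ would consist of $e$ alone, so for each edge $g \neq e$ the $s-t$-path of $G$ through $g$ (which exists since $g$ is not useless in $G$) would avoid $e$ and hence be an $s-t$-path of $G \backslash e$, making $G \backslash e$ have no useless edge --- contradicting the hypothesis. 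I would also record one consequence of $G$ having no cycle, to be used repeatedly: for an edge $(x,y)$ of $G$ or of $G \backslash e$, lying on an $s-t$-path is equivalent to there being a directed path from $s$ to $x$ and a directed path from $y$ to $t$. (The non-obvious implication is the backward one; but two such paths, prepended and appended to the edge $(x,y)$, automatically form a single path, since a repeated vertex would close up into a cycle.)

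Now I would prove the claim by contradiction. Suppose there is a directed path from $s$ to $s'$ in $G \backslash e$; I will show that $G \backslash e$ then has no useless edge at all. Let $g = (x,y) \in E \setminus \{e\}$ be arbitrary. Since $g$ is not useless in $G$, there is an $s-t$-path of $G$ through $g$, so in $G$ there is a directed path from $s$ to $x$ and one from $y$ to $t$. A path from $s$ to $x$ that uses $e$ must begin with $e$ (since $e$ has source $s$ and a path does not revisit $s$), then run from $s'$ to $x$ while avoiding $s$, hence avoiding $e$; concatenated with the assumed path from $s$ to $s'$ in $G \backslash e$, this shows $x$ is reachable from $s$ in $G \backslash e$ --- and if the original path from $s$ to $x$ did not use $e$, the same conclusion is immediate. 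A path from $y$ to $t$ cannot use $e$: if it did, it would pass through $s$; when $y \neq s$ this gives a nontrivial path from $y$ to $s$, so $y$ has an outgoing edge, which is not useless and hence lies on an $s-t$-path, so $s$ reaches $y$, which together with $y$ reaching $s$ contradicts acyclicity; and when $y = s$, the edge $g$ has target $s$, hence is useless in $G$ by Lemma~\ref{lem:tgt-s-useless} --- again a contradiction. So there is a path from $y$ to $t$ in $G \backslash e$. By the recorded consequence (applied in $G \backslash e$), $g$ lies on an $s-t$-path of $G \backslash e$. As $g$ was arbitrary, $G \backslash e$ has no useless edge, contradicting the hypothesis; this proves the claim.

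Granting the claim, I would finish by checking the two assertions about $G / e$; write $\sigma$ for the identified vertex $s \sim s'$. For \emph{no cycles}: a cycle of $G / e$ avoiding $\sigma$ would be a cycle of $G$, which is impossible; and a cycle of $G / e$ through $\sigma$, with its edges read in $G$, becomes a walk from a vertex in $\{s,s'\}$ to a vertex in $\{s,s'\}$ that uses only edges of $E \setminus \{e\}$ and whose intermediate vertices are distinct and lie outside $\{s, s'\}$. Such a walk cannot be closed (that would be a cycle of $G$) and cannot run from $s'$ to $s$ (following it with the edge $e$ would give a cycle of $G$); so it runs from $s$ to $s'$, i.e.\ it is an $s-s'$-path of $G \backslash e$ --- contradicting the claim. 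For \emph{no useless edges}: given $g = (x,y) \in E \setminus \{e\}$, choose an $s-t$-path $p$ of $G$ through $g$. If $p$ does not use $e$, then $p$ avoids $s'$ (a prefix of $p$ ending at $s'$ would be an $s-s'$-path of $G \backslash e$), so under contraction $p$ becomes an $s-t$-path of $G / e$ through $g$. If $p$ uses $e$, then $e$ is the first edge of $p$ and the next vertex is $s'$, so deleting $e$ from $p$ leaves an $s'-t$-path of $G$ that avoids $s$, uses only edges of $E \setminus \{e\}$, and still contains $g$ (as $g \neq e$); under contraction this becomes an $s-t$-path of $G / e$ through $g$. Either way, $g$ is not useless in $G / e$.

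The main obstacle is the claim: it is the one place where the hypothesis ``$G \backslash e$ has a useless edge'' enters, and it requires a careful interplay of that hypothesis with the acyclicity of $G$ and with Lemma~\ref{lem:tgt-s-useless}. Once the claim is in hand, the two statements about $G / e$ are routine bookkeeping about which directed paths are forced to use, or to avoid, the edge $e$.
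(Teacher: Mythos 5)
Your proof is correct, and it reorganizes the argument around a cleaner intermediate invariant than the paper uses. The paper's proof works directly: it picks a useless edge $f$ of $G \backslash e$ and a (necessarily $e$-containing) $s-t$-path $p$ of $G$ through $f$; to rule out a cycle of $G/e$, it lifts the cycle to an $s-s'$-path of $G \backslash e$ (just as you do) and then \emph{splices} that $s-s'$-path into $p$ in place of $e$, obtaining an $s-t$-path of $G \backslash e$ through $f$ and thereby contradicting $f$'s uselessness. You instead isolate the statement ``$s'$ is not reachable from $s$ in $G \backslash e$'' as a self-contained claim, deriving it from the hypothesis via the DAG characterization of non-useless edges (an edge $(x,y)$ lies on an $s-t$-path iff $s$ reaches $x$ and $y$ reaches $t$); once the claim is in hand, both the no-cycles and no-useless-edges parts fall out by routine checks. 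One difference downstream: the paper's no-useless-edges argument invokes the just-proved acyclicity of $G/e$ to conclude that a contracted walk is a path, whereas your version uses the claim directly to show the relevant path avoids $s'$, so the two parts of your finish are logically independent of each other. Your proof of the claim is somewhat more intricate than the paper's splicing step (and your detour through ``$y$ has an outgoing edge, which lies on an $s-t$-path'' could be shortened to simply ``$s$ reaches $x$ and $x \to y$ via $g$, so $s$ reaches $y$''), but the payoff is a tidier split of the final verification.
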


\begin{proof}
Let $s'$ be the target of $e$.
Then, $s' \neq s$ (by Lemma~\ref{lem.G/e.basics} (b)).

The graph $G \backslash e$ is a subgraph of $G$, and thus has
no cycles (since $G$ has no cycles).

We have assumed that $G \backslash e$ has a useless arc.
Fix such an arc, and call it $f$.

First, we shall show that $G / e$ has no cycles.

Indeed, assume the contrary. Thus, $G / e$ has a cycle $c$.
Consider each arc of $c$ as an arc of $G$.
Then, $c$ is either a cycle of $G$, or an $s'-s$-path of $G$,
or an $s-s'$-path of $G$
(because the only way in which the cycle could break when
lifted from $G / e$ to $G$ is if the cycle passes through
the identified vertex $s \sim s'$).
The first of these three cases is impossible (since $G$ has no cycles),
and so is the second (by Lemma~\ref{lem:tgt-s-useless}, since $G$ has no useless arcs).
Thus, the third case must hold.
In other words, $c$ is an $s-s'$-path of $G$.
Since $c$ does not contain $e$ (because $c$ was originally a cycle of $G / e$),
this shows that $c$ is an $s-s'$-path of $G \backslash e$.
Thus, $G \backslash e$ has an $s-s'$-path.
Hence, Lemma~\ref{lem.G-e.C} shows that
$f$ is a useless arc of $G$.
But this contradicts the fact that $G$ has no useless arcs.
This contradiction shows that our assumption was false.
Hence, we have proven that $G / e$ has no cycles.

Next, we shall show that $G / e$ has no useless arcs.


Let $h$ be any arc of $G / e$; we will show that $h$ is not useless.  
Consider $h$ as an arc of $G$.
Then $h$ is distinct from $e$, and cannot be a useless arc of $G$ (since $G$ has no useless arcs).
Hence, there exists an $s-t$-path of $G$ containing $h$.
Let $r$ be such an $s-t$-path.
By removing the arc $e$ from $r$ (if it is contained in $r$),
we obtain an $s-t$-walk of $G / e$.
This latter $s-t$-walk must actually be an $s-t$-path (since $G / e$
has no cycles), and furthermore contains $h$ (because $r$ contains
$h$, and because $h$ is distinct from $e$ and thus could not
have been removed).
Hence, there exists an $s-t$-path of $G / e$ that contains $h$.
Thus $h$ is not useless.
Since $h$ was an arbitrary arc of $G / e$, we have proven that $G / e$ has no useless arcs.

Altogether, we now know that $G / e$ has no cycles and no useless arcs.
This proves Lemma~\ref{lem.G/e.C}.
\end{proof}

\begin{lemma} \label{lem.G/e.Dnew}
    Let $e \in E$ be an arc whose source is $s$.
	Assume that $\# E > 1$.
    Assume further that $G \backslash e$ has no useless arcs.
    
    
    Then, the graph $G \backslash e$ has the same nonsinks as the graph $G$.
    
    
\end{lemma}

\begin{proof}
The set of arcs of the graph $G \backslash e$ is
$E \setminus \set{e}$, and thus is nonempty
(since $\# E > 1$).
Thus, the graph $G \backslash e$ has at least one
arc $f$. This arc $f$ is not useless (since
$G \backslash e$ has no useless arcs), and thus
there exists an $s-t$-path of $G \backslash e$
that contains $f$. This $s-t$-path has at least
one arc (since it contains $f$), and thus must
have a first arc. This first arc has source $s$.

Hence, the graph $G \backslash e$ has an arc
with source $s$ (namely, this first arc).
In other words, $s$ is a nonsink of
$G \backslash e$. Consequently, $s$ is a nonsink
of $G$ as well.

For any vertex $v \in V \setminus \set{s}$, it is clear that
$v$ is a nonsink of $G \backslash e$ if and only if $v$ is a nonsink of $G$
(since $G \backslash e$ differs from $G$ only in the arc $e$,
whose source is $s$).
This also holds for $v = s$ (since $s$ is a nonsink of $G$ and of $G \backslash e$ both).
Thus, it holds for all $v \in V$.
In other words, the graph $G \backslash e$ has the same nonsinks as the graph $G$.
This proves Lemma~\ref{lem.G/e.Dnew}. 
\end{proof}

\section{\label{sect.fpol-proof1}Proof of the f-polynomial}

Let us first take aim at Theorem~\ref{thm.fpols.divis}.

We begin with a recurrence for f-polynomials in terms of
deletions and links
(see Subsection~\ref{subsect.main.fpol} and
Subsection~\ref{subsect.delcont.dl-lk} for the relevant definitions).

\begin{lemma} \label{lem.fpol.rec}
Let $\Delta$ be a simplicial complex on the ground set $W$.
Let $w \in W$. Then,
\[
f_{\Delta}\tup{x} = f_{\dl_\Delta(w)}\tup{x} + x f_{\lk_\Delta(w)}\tup{x} .
\]
\end{lemma}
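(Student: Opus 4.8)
The plan is to partition the face set of $\Delta$ according to whether a face contains $w$ or not, and to recognize each of the two resulting pieces as (a copy of) the faces of $\dl_\Delta(w)$ and $\lk_\Delta(w)$ respectively, while tracking cardinalities. Concretely, I would start from the defining formula $f_\Delta(x) = \sum_{I \in \Delta} x^{\# I}$ and split the sum as
\[
f_\Delta(x) = \sum_{\substack{I \in \Delta \\ w \notin I}} x^{\# I} + \sum_{\substack{I \in \Delta \\ w \in I}} x^{\# I}.
\]

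For the first sum, Proposition~\ref{prop.star.basics}~(c) tells us that $\set{I \in \Delta \with w \notin I} = \dl_\Delta(w)$, so this sum is exactly $\sum_{I \in \dl_\Delta(w)} x^{\# I} = f_{\dl_\Delta(w)}(x)$. For the second sum, Proposition~\ref{prop.star.basics}~(e) provides the bijection $\set{I \in \Delta \with w \in I} \to \lk_\Delta(w)$, $I \mapsto I \setminus \set{w}$; substituting $J = I \setminus \set{w}$ (so that $\# I = \# J + 1$ since $w \in I$) turns the second sum into $\sum_{J \in \lk_\Delta(w)} x^{\# J + 1} = x \sum_{J \in \lk_\Delta(w)} x^{\# J} = x\, f_{\lk_\Delta(w)}(x)$. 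Adding the two pieces gives the claimed identity.

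There is essentially no obstacle here: the only thing to be careful about is the bookkeeping of sizes in the second sum (the factor of $x$ comes precisely from the fact that every face in $\set{I \in \Delta \with w \in I}$ has one more element than its image under the link bijection), and the fact that the splitting of $\sum_{I \in \Delta}$ into the $w \in I$ and $w \notin I$ parts is exhaustive and disjoint. Both are immediate. If one prefers to avoid citing Proposition~\ref{prop.star.basics}, one can instead observe directly that $w \notin I$ and $I \in \Delta$ together say exactly $I \in \dl_\Delta(w)$, and that for $I \in \Delta$ with $w \in I$ the set $I \setminus \set{w}$ lies in $\lk_\Delta(w)$ by definition of the link, with the map $I \mapsto I \setminus \set{w}$ invertible via $J \mapsto J \cup \set{w}$.
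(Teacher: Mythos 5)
Your proposal is correct and matches the paper's own proof essentially line for line: both split the defining sum over $\Delta$ according to whether $w$ lies in the face, identify the $w\notin I$ part with $f_{\dl_\Delta(w)}$, and use the size-shifting bijection $I\mapsto I\setminus\set{w}$ from Proposition~\ref{prop.star.basics}~(e) to recognize the $w\in I$ part as $x\,f_{\lk_\Delta(w)}$. Nothing to add.
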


\begin{proof}
This is a folklore result
(see \cite[the equality $\mathcal{F}_\Delta
= x \mathcal{F}_{\tup{\Delta:x}} + \mathcal{F}_{\tup{\Delta,x}}$
in the proof of
Lemma 2.4]{MariettiTesta08} for a generalization
in which $w$ is replaced by a monomial\footnote{This
generalization is stated in the language of monomial
ideals. To translate between the two languages,
identify each simplicial complex $\Delta$ on the ground
set $W$ with the monomial ideal $I$ of the polynomial ring
$\mathbf{k}\left[x_w \mid w \in W\right]$ that is
spanned by all monomials that do \textbf{not} have the
form $\prod_{w \in I} x_w$ for $I \subseteq \Delta$.}).
But for the reader's convenience,
let us give an easy elementary proof:

The definition of $f_{\Delta}\tup{x}$ yields
\begin{align}
f_{\Delta}\tup{x}
= \sum_{I \in \Delta} x^{\# I}
= \sum_{\substack{I \in \Delta; \\ w \notin I}} x^{\# I} + \sum_{\substack{I \in \Delta; \\ w \in I}} x^{\# I} .
\label{pf.lem.fpol.rec.1}
\end{align}
But the definition of $f_{\dl_\Delta(w)}\tup{x}$ shows that
\begin{align}
f_{\dl_\Delta(w)}\tup{x}
= \sum_{I \in \dl_\Delta(w)} x^{\# I}
= \sum_{\substack{I \in \Delta; \\ w \notin I}} x^{\# I}
\label{pf.lem.fpol.rec.2}
\end{align}
(since the $I \in \dl_\Delta(w)$ are precisely the $I \in \Delta$ satisfying $w \notin I$).
The definition of $f_{\lk_\Delta(w)}\tup{x}$ shows that
\begin{align*}
f_{\lk_\Delta(w)}\tup{x}
&= \sum_{I \in \lk_\Delta(w)} x^{\# I}
= \sum_{\substack{I \in \Delta; \\ w \in I}} x^{\# \tup{I \setminus \set{w}}} 
\end{align*}
(since the map $\set{I \in \Delta \with w \in I} \to \lk_\Delta(w),\ I \mapsto I \setminus \set{w}$ is a bijection).
Multiplying the latter equality by $x$, we obtain
\begin{align}
x f_{\lk_\Delta(w)}\tup{x}
&= \sum_{\substack{I \in \Delta; \\ w \in I}} x^{1 + \# \tup{I \setminus \set{w}}} 
= \sum_{\substack{I \in \Delta; \\ w \in I}} x^{ \# I }
\label{pf.lem.fpol.rec.3}
\end{align}
(since $1 + \# \tup{I \setminus \set{w}} = \# I$ for any $I \in \Delta$ satisfying $w \in I$).
Adding  \eqref{pf.lem.fpol.rec.2} and \eqref{pf.lem.fpol.rec.3} together,
and comparing the result with \eqref{pf.lem.fpol.rec.1},
we obtain Lemma~\ref{lem.fpol.rec}.
\end{proof}

As a consequence of Lemma~\ref{lem.fpol.rec}, we get the following lemma.

\begin{lemma} \label{lem.fpol.cone}
Let $\Delta$ be a simplicial complex on the ground set $W$. Let $w \in W$.
Assume that $\Delta$ is a cone with apex $w$. Then:

\begin{enumerate}

\item[(a)] We have $\dl_\Delta(w) = \lk_\Delta(w)$.

\item[(b)] We have $f_{\Delta}\tup{x} = \tup{1+x} f_{\lk_\Delta(w)}\tup{x}$.

\end{enumerate}
\end{lemma}

\begin{proof}
(a) Let $I \in \dl_\Delta(w)$. Then, $I \in \Delta$ and $w \notin I$
(by the definition of $\dl_\Delta(w)$).
But $\Delta$ is a cone with apex $w$; hence, from $I \in \Delta$,
we obtain $I \cup \set{w} \in \Delta$ (by the definition of a cone).
Thus, $I$ is an $F \in \dl_\Delta(w)$ satisfying $F \cup \set{w} \in \Delta$.
In other words, $I \in \lk_\Delta(w)$ (by the definition of $\lk_\Delta(w)$).

Forget that we fixed $I$.
We thus have shown that $I \in \lk_\Delta(w)$ for each $I \in \dl_\Delta(w)$.
In other words, $\dl_\Delta(w) \subseteq \lk_\Delta(w)$.
Combining this with the obvious inclusion $\lk_\Delta(w) \subseteq \dl_\Delta(w)$,
we obtain $\dl_\Delta(w) = \lk_\Delta(w)$.
This proves Lemma~\ref{lem.fpol.cone} (a).

(b) Lemma~\ref{lem.fpol.rec} yields
\begin{align*}
f_{\Delta}\tup{x}
&= f_{\dl_\Delta(w)}\tup{x} + x f_{\lk_\Delta(w)}\tup{x} \\
&= f_{\lk_\Delta(w)}\tup{x} + x f_{\lk_\Delta(w)}\tup{x}
\qquad \tup{\text{since part (a) yields } \dl_\Delta(w) = \lk_\Delta(w) } \\
&= \tup{1+x} f_{\lk_\Delta(w)}\tup{x} . \qedhere
\end{align*}
\end{proof}

\begin{verlong}

Lemma~\ref{lem.chitil.dual}, too, can be generalized to f-polynomials,
resulting in the following lemma.

\begin{lemma} \label{lem.fpol.dual}
Let $\Delta$ be a simplicial complex on a nonempty ground set $W$. Then,
\[
f_{\Delta^\vee}\tup{x} = \tup{1+x}^{\# W} - x^{\# W} f_{\Delta}\tup{1/x} .
\]
\end{lemma}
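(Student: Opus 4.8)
The plan is to obtain $f_{\Delta^\vee}\tup{x}$ from its defining sum by complementing every face inside $W$, and then to isolate the contribution of $\Delta$.

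First I would unwind the definitions. Since $\Delta^\vee = \set{F \subseteq W \with W \setminus F \notin \Delta}$, we have
\[
f_{\Delta^\vee}\tup{x} = \sum_{F \in \Delta^\vee} x^{\# F} = \sum_{\substack{F \subseteq W; \\ W \setminus F \notin \Delta}} x^{\# F} .
\]
Next I would substitute $G = W \setminus F$. The map $F \mapsto W \setminus F$ is an involution of $2^W$ satisfying $\# F = \# W - \# G$, so the above sum rewrites as $\sum_{G \subseteq W,\ G \notin \Delta} x^{\# W - \# G}$. Splitting off the part indexed by $\Delta$ (legitimate since $\Delta \subseteq 2^W$), this equals
\[
\sum_{G \subseteq W} x^{\# W - \# G} - \sum_{G \in \Delta} x^{\# W - \# G} .
\]

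It then remains to evaluate the two sums, which I would do in the Laurent polynomial ring $\ZZ\ive{x, x^{-1}}$ (in which $f_\Delta\tup{1/x}$ is a legitimate element): pulling out the factor $x^{\# W}$ and grouping the subsets of $W$ by size, the binomial theorem gives $\sum_{G \subseteq W} x^{\# W - \# G} = x^{\# W} \tup{1 + 1/x}^{\# W} = \tup{1+x}^{\# W}$, while by the definition of $f_\Delta$ we have $\sum_{G \in \Delta} x^{\# W - \# G} = x^{\# W} f_\Delta\tup{1/x}$. Combining the displays yields exactly $f_{\Delta^\vee}\tup{x} = \tup{1+x}^{\# W} - x^{\# W} f_\Delta\tup{1/x}$, as claimed; and since the right-hand side is visibly a polynomial in $x$, the identity in fact lives in $\ZZ\ive{x}$.

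There is no real obstacle here beyond bookkeeping. The only point that warrants a line of care is the manipulation of $f_\Delta\tup{1/x}$, which is why I would work over $\ZZ\ive{x, x^{-1}}$ from the start — or, equivalently, multiply the target identity through by $x^{\# W}$ at the outset and prove the resulting polynomial identity in $\ZZ\ive{x}$ directly, a formulation in which the hypothesis $W \neq \varnothing$ is not even needed.
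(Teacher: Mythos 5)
Your argument is correct and is essentially the same as the paper's: both complement each face of $\Delta^\vee$ inside $W$, split the resulting sum over $2^W \setminus \Delta$ into the full sum over $2^W$ (evaluated by the binomial theorem) minus the sum over $\Delta$, and recognize the latter as $x^{\# W} f_\Delta\tup{1/x}$. Your closing remark that the hypothesis $W \neq \varnothing$ is superfluous once the identity is cleared of denominators is a nice observation, but it does not change the substance of the argument.
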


\begin{proof}
We have $\sum\limits_{I \in 2^W} x^{\# I}
= \sum\limits_{k=0}^{\# W} \dbinom{\# W}{k} x^k = \tup{1+x}^{\# W}$
(by the binomial theorem).
Substituting $1/x$ for $x$ in this equality, we find
\[
\sum_{I \in 2^W} \tup{1/x}^{\# I}
= \tup{\underbrace{1+1/x}_{= \tup{1/x} \tup{1+x}}}^{\# W}
= \tup{1/x}^{\# W} \tup{1+x}^{\# W} .
\]

But the definition of $\Delta^\vee$ shows that the faces of $\Delta^\vee$
are the complements (in $W$) of the subsets of $W$ that do not belong to $\Delta$.
Hence, the map $2^W \setminus \Delta \to \Delta^\vee, \  I \mapsto W \setminus I$
is a bijection.
Thus, we can substitute $W \setminus I$ for $I$ in the sum
$\sum\limits_{I \in \Delta^\vee} x^{\# I}$.
Hence, we obtain
\begin{align}
\sum_{I \in \Delta^\vee} x^{\# I}
&= \sum_{I \in 2^W \setminus \Delta} \underbrace{x^{\# \tup{W \setminus I}}}_{\substack{= x^{\# W - \# I} \\ = x^{\# W} \tup{1/x}^{\# I}}}
= x^{\# W} \underbrace{\sum_{I \in 2^W \setminus \Delta} \tup{1/x}^{\# I}}_{= \sum\limits_{I \in 2^W} \tup{1/x}^{\# I}
 - \sum\limits_{I \in \Delta} \tup{1/x}^{\# I}} \nonumber\\
&= x^{\# W} \underbrace{\sum_{I \in 2^W} \tup{1/x}^{\# I}}_{= \tup{1/x}^{\# W} \tup{1+x}^{\# W}}
 - x^{\# W} \sum_{I \in \Delta} \tup{1/x}^{\# I} \nonumber\\
&= \underbrace{x^{\# W}\tup{1/x}^{\# W}}_{= 1} \tup{1+x}^{\# W} - x^{\# W} \sum_{I \in \Delta} \tup{1/x}^{\# I}
\nonumber \\
& = \tup{1+x}^{\# W} - x^{\# W} \sum_{I \in \Delta} \tup{1/x}^{\# I} .
\label{pf.lem.fpol.dual.1}
\end{align}
But the definition of the f-polynomial of a simplicial complex shows that
\[
f_{\Delta}\tup{1/x}
= \sum_{I \in \Delta} \tup{1/x}^{\# I}
\qquad \text{ and } \qquad
f_{\Delta^\vee}\tup{x}
= \sum_{I \in \Delta^\vee} x^{\# I} .
\]
In light of this, the equality \eqref{pf.lem.fpol.dual.1} rewrites as $f_{\Delta^\vee}\tup{x} = \tup{1+x}^{\# W} - x^{\# W} f_{\Delta}\tup{1/x}$. This proves Lemma~\ref{lem.fpol.dual}.
\end{proof}

\end{verlong}

Next, we start analyzing quasi-cycles of deletions and contractions
(see Subsection~\ref{subsect.main.fpol} for the definition of a quasi-cycle).

\begin{lemma} \label{lem.G/e-cyc}
	Let $e \in E$ be any arc with source $s$.
	Then, any cycle of $G$ that does not contain $s$
	is also a cycle of $G \backslash e$ and a cycle of $G / e$.
\end{lemma}

\begin{proof}
Consider a cycle of $G$ that does not contain $s$.
Then, this cycle cannot use the arc $e$ (since $s$ is
the source of $e$), and thus is also a cycle of
$G \backslash e$.

It remains to show that it is also a cycle of $G / e$.
It is certainly a closed walk of $G / e$, since it does
not use the arc $e$.
It remains to show that its vertices remain distinct
when the arc $e$ is contracted.
But this is clear, since our cycle does not contain $s$.
\end{proof}

\begin{lemma} \label{lem.G/e.quasi1}
    Let $e \in E$ be a useless arc.
    Let $k$ be a positive integer such that $G$ has
    at least $k$ disjoint quasi-cycles.
    Then, $G \backslash e$ has at least $k-1$ disjoint quasi-cycles.
\end{lemma}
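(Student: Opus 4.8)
The idea is that deleting one edge can only "destroy" one of the given quasi-cycles, since the quasi-cycles are pairwise disjoint. So I would fix $k$ pairwise disjoint quasi-cycles $Q_1, Q_2, \ldots, Q_k$ of $G$, observe that at most one of them contains the edge $e$ (as they are pairwise disjoint subsets of $E$), and then show that each $Q_i$ not containing $e$ is a quasi-cycle of $G \backslash e$. Since the $Q_i$ not containing $e$ number at least $k-1$ and are still pairwise disjoint (and are subsets of $E \setminus \set{e}$), this gives the desired conclusion. The hypothesis that $e$ is useless and that $k$ is positive will only be needed insofar as useless edges form $1$-element quasi-cycles and $k - 1 \geq 0$ makes the statement meaningful; the real content is the preservation of quasi-cycles under deletion of an unrelated edge.

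\textbf{Key steps.} First, I would record the combinatorial observation: among pairwise disjoint sets $Q_1, \ldots, Q_k$, the element $e$ lies in at most one, so after relabeling we may assume $e \notin Q_i$ for all $i \in \set{1, 2, \ldots, k-1}$. Second, I would prove the claim that whenever $Q$ is a quasi-cycle of $G$ with $e \notin Q$, the set $Q$ is a quasi-cycle of $G \backslash e$. This splits into two cases according to the definition of a quasi-cycle. If $Q$ is the edge set of a cycle $c$ of $G$, then since $e \notin Q$, the cycle $c$ uses only edges of $E \setminus \set{e}$, hence is a cycle of $G \backslash e$; thus $Q$ is a quasi-cycle of $G \backslash e$. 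If instead $Q = \set{f}$ for a useless edge $f$ of $G$, then $f \neq e$ (because $e \notin Q = \set{f}$), and every $s-t$-path of $G \backslash e$ is in particular an $s-t$-path of $G$, so $f$, belonging to no $s-t$-path of $G$, belongs to no $s-t$-path of $G \backslash e$ either; hence $f$ is a useless edge of $G \backslash e$ and $Q = \set{f}$ is a quasi-cycle of $G \backslash e$. Third, applying this claim to $Q_1, \ldots, Q_{k-1}$ yields $k-1$ quasi-cycles of $G \backslash e$ which remain pairwise disjoint, completing the proof.

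\textbf{Main obstacle.} There is essentially no serious obstacle here; the only thing requiring a line of care is the verification in the second case that a useless edge of $G$ distinct from $e$ stays useless in $G \backslash e$, which reduces to the trivial remark that deleting an edge cannot create new $s-t$-paths. Everything else is bookkeeping with the disjointness condition.
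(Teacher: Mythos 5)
Your proof is correct and essentially identical to the paper's: both observe that at most one of the $k$ disjoint quasi-cycles can contain $e$, and that a quasi-cycle of $G$ avoiding $e$ remains a quasi-cycle of $G \backslash e$ (because cycles avoiding $e$ survive deletion, and because $G \backslash e$ has no $s$-$t$-paths that $G$ lacks, so useless edges stay useless). Your parenthetical remark that the uselessness of $e$ is never really used is also accurate and matches the paper's argument.
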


\begin{proof}
Any path of $G \backslash e$ is a path of $G$.
Thus, any useless arc of $G$ other than $e$ is also a useless arc of $G \backslash e$.
Furthermore, any cycle of $G$ that does not contain $e$ is also
a cycle of $G \backslash e$.
Combining these two statements, we conclude that any quasi-cycle
of $G$ that does not contain $e$ is also a quasi-cycle of $G \backslash e$.

But $G$ has at least $k$ disjoint quasi-cycles.
Clearly, at least $k-1$ of them do not contain $e$,
and thus are quasi-cycles of $G \backslash e$ as well.
Hence, $G \backslash e$ has at least $k-1$ disjoint quasi-cycles.
\end{proof}

\begin{lemma} \label{lem.G/e.quasiB}
    Let $e \in E$ be an arc whose source is $s$.
    Let $k$ be a nonnegative integer.
    Assume that $G$ has at least $k$ disjoint quasi-cycles but has no useless arcs.%
	\footnote{Thus, of course, a quasi-cycle of $G$ is just the set of the arcs of some cycle.}
    Then, each of the two graphs $G \backslash e$ and $G / e$
    has at least $k$ disjoint quasi-cycles.
\end{lemma}

\begin{proof}
Let $c$ be a quasi-cycle of $G$.
Then, $c$ must be the set of the arcs of a cycle of $G$ (since
$G$ has no useless arcs).
This cycle cannot contain $s$ (by Lemma~\ref{lem.G/e.basics} (a)),
and thus Lemma~\ref{lem.G/e-cyc} shows that this cycle
is also a cycle of $G \backslash e$ and a cycle of $G / e$.
Hence, the quasi-cycle $c$ is a quasi-cycle of $G \backslash e$
and a quasi-cycle of $G / e$.

Now, forget that we have fixed $c$.
We thus have shown that each quasi-cycle $c$ of $G$ is a quasi-cycle
of $G \backslash e$ and a quasi-cycle of $G / e$.
Therefore, since $G$ has at least $k$ disjoint quasi-cycles,
we conclude that each of the two graphs $G \backslash e$ and $G / e$
has at least $k$ disjoint quasi-cycles.
\end{proof}

We are now able to prove Theorem~\ref{thm.fpols.divis}, which we recall for convenience:

\begin{statement} \textbf{Theorem~\ref{thm.fpols.divis}:}
Let $k$ be a nonnegative integer.
Assume that $G$ has at least $k$ disjoint quasi-cycles.
Then the polynomials $f_{\Pm{G}}\tup{x}$ and $f_{\Pf{G}}\tup{x}$ are divisible by $\tup{1+x}^k$.
\end{statement}

\begin{proof}[Proof of Theorem~\ref{thm.fpols.divis}.]
We proceed in multiple steps.

\textit{Step 1:}
We claim that Theorem~\ref{thm.fpols.divis} is true when $k = 0$.

Indeed, if $k = 0$, then any polynomial is divisible by $\tup{1+x}^k$ (since $\tup{1+x}^k = \tup{1+x}^0 = 1$); thus, Theorem~\ref{thm.fpols.divis} is proven in this case.


\textit{Step 2:}
We claim that Theorem~\ref{thm.fpols.divis} is true when $\# E = 0$.

Indeed, if $\# E = 0$, then $G$ has no arcs and
therefore no quasi-cycles;
but this entails that $k = 0$
(since $G$ has at least $k$ disjoint quasi-cycles),
and therefore Theorem~\ref{thm.fpols.divis} is true (according to Step 1).
Hence, Theorem~\ref{thm.fpols.divis} is proven in the case when $\# E = 0$.

\textit{Step 3:}
We shall now prove Theorem~\ref{thm.fpols.divis} by induction on $\# E$.

The base case ($\# E = 0$) follows from Step 2.

Thus, we proceed to the induction step.
We fix a directed graph $G = \tup{V, E, s, t}$ with $\# E > 0$,
and we assume (as induction hypothesis) that Theorem~\ref{thm.fpols.divis} is true for all graphs with exactly $\# E - 1$ arcs.
Thus, in particular, Theorem~\ref{thm.fpols.divis} is true for $G \backslash e$ and for $G / e$ whenever $e$ is an arc of $G$.
We must now prove Theorem~\ref{thm.fpols.divis} for our graph $G$.

If $k = 0$, then Theorem~\ref{thm.fpols.divis} is true by Step 1; thus, we WLOG assume that $k$ is positive.
Hence, $k-1$ is a nonnegative integer.



We are in one of the following two cases:

\textit{Case 1:} The graph $G$ has no useless arcs.

\textit{Case 2:} The graph $G$ has a useless arc.

Let us first consider Case 1. In this case, the graph $G$ has no useless arcs.
But $G$ has at least one arc (because $\# E > 0$).
Let $f$ be such an arc.
The arc $f$ cannot be useless (since $G$ has no useless arcs),
and thus belongs to some $s-t$-path.
This $s-t$-path must have at least one arc (namely, $f$);
let $e$ be its first arc.
Thus, $e$ is an arc with source $s$.

Lemma~\ref{lem.fpol.rec} (applied to $W = E$, $\Delta = \Pm{G}$
and $w = e$) yields
\begin{align}
f_{\Pm{G}}\tup{x}
& = f_{\dl_{\Pm{G}}(e)}\tup{x} + x f_{\lk_{\Pm{G}}(e)}\tup{x} \nonumber\\
& = f_{\Pm{G/e}}\tup{x} + x f_{\Pm{G \backslash e}}\tup{x}
\label{pf.thm.fpols.divis.rec}
\end{align}
(since $\dl_{\Pm{G}}(e) = \Pm{G/e}$ by Lemma~\ref{lem:delcon} (b),
and since $\lk_{\Pm{G}}(e) = \Pm{G \backslash e}$ by Lemma~\ref{lem:delcon} (a)).

Lemma~\ref{lem.fpol.rec} (applied to $W = E$, $\Delta = \Pf{G}$
and $w = e$) yields
\begin{align}
f_{\Pf{G}}\tup{x}
& = f_{\dl_{\Pf{G}}(e)}\tup{x} + x f_{\lk_{\Pf{G}}(e)}\tup{x} \nonumber\\
& = f_{\Pf{G \backslash e}}\tup{x} + x f_{\Pf{G / e}}\tup{x}
\label{pf.thm.fpols.divis.rec-Pf}
\end{align}
(since $\dl_{\Pf{G}}(e) = \Pf{G \backslash e}$ by Lemma~\ref{lem:delcon_pf} (a),
and since $\lk_{\Pf{G}}(e) = \Pf{G/e}$ by Lemma~\ref{lem:delcon_pf} (b)).

But Lemma~\ref{lem.G/e.quasiB} shows that
each of the two graphs $G \backslash e$ and $G / e$
has at least $k$ disjoint quasi-cycles.
By the induction hypothesis, we can thus apply Theorem~\ref{thm.fpols.divis} to each of these two graphs,
and conclude that
the polynomials $f_{\Pm{G \backslash e}}\tup{x}$
and $f_{\Pf{G \backslash e}}\tup{x}$ as well as
the polynomials $f_{\Pm{G/e}}\tup{x}$
and $f_{\Pf{G/e}}\tup{x}$
are divisible by $\tup{1+x}^k$.
Hence, all four addends on the right hand sides of
\eqref{pf.thm.fpols.divis.rec} and
\eqref{pf.thm.fpols.divis.rec-Pf} are divisible by $\tup{1+x}^k$.
Hence, so are the left hand sides.
In other words, both
polynomials $f_{\Pm{G}}\tup{x}$ and $f_{\Pf{G}}\tup{x}$ are divisible by $\tup{1+x}^k$.
Hence, we have proven Theorem~\ref{thm.fpols.divis} for our graph $G$
in Case 1.

Let us next consider Case 2.
In this case, the graph $G$ has a useless arc.
Let $e$ be such an arc.
Lemma~\ref{lem:dumbarc} shows that $\Pm{G}$ is a cone with apex $e$.
Hence, Lemma~\ref{lem.fpol.cone} (b)
(applied to $W = E$, $\Delta = \Pm{G}$ and $w = e$) shows that
$f_{\Pm{G}}\tup{x} = \tup{1+x} f_{\lk_{\Pm{G}}(e)}\tup{x}$.
In view of Lemma~\ref{lem:delcon} (a), this rewrites as
$f_{\Pm{G}}\tup{x} = \tup{1+x} f_{\Pm{G \backslash e}}\tup{x}$.

But Lemma~\ref{lem.G/e.quasi1} shows that the graph
$G \backslash e$ has at least $k-1$ disjoint quasi-cycles.
By the induction hypothesis, we can thus apply Theorem~\ref{thm.fpols.divis}
to this graph with $k$ replaced by $k-1$,
and conclude that 
the polynomials $f_{\Pm{G \backslash e}}\tup{x}$ and $f_{\Pf{G \backslash e}}\tup{x}$
are divisible by $\tup{1+x}^{k-1}$.

Now, the equality
$f_{\Pm{G}}\tup{x} = \tup{1+x} f_{\Pm{G \backslash e}}\tup{x}$
shows that the polynomial $f_{\Pm{G}}\tup{x}$ is divisible
by $\tup{1+x}^k$ (since $f_{\Pm{G \backslash e}}\tup{x}$
is divisible by $\tup{1+x}^{k-1}$).

Furthermore,
Lemma~\ref{lem:dumbarc} shows that $\Pf{G}$ is a cone with apex $e$.
Hence, Lemma~\ref{lem.fpol.cone} (a)
(applied to $W = E$, $\Delta = \Pf{G}$ and $w = e$) shows that
$\dl_{\Pf{G}}(e) = \lk_{\Pf{G}}(e)$.
Also, Lemma~\ref{lem.fpol.cone} (b)
(applied to $W = E$, $\Delta = \Pf{G}$ and $w = e$) shows that
$f_{\Pf{G}}\tup{x} = \tup{1+x} f_{\lk_{\Pf{G}}(e)}\tup{x}$.
In view of $\dl_{\Pf{G}}(e) = \lk_{\Pf{G}}(e)$, this rewrites as
$f_{\Pf{G}}\tup{x} = \tup{1+x} f_{\dl_{\Pf{G}}(e)}\tup{x}$.
In view of Lemma~\ref{lem:delcon_pf} (a), this rewrites as
$f_{\Pf{G}}\tup{x} = \tup{1+x} f_{\Pf{G \backslash e}}\tup{x}$.
Thus, the polynomial $f_{\Pf{G}}\tup{x}$ is divisible
by $\tup{1+x}^k$ (since $f_{\Pf{G \backslash e}}\tup{x}$
is divisible by $\tup{1+x}^{k-1}$).

We have now shown that both
polynomials $f_{\Pm{G}}\tup{x}$ and $f_{\Pf{G}}\tup{x}$ are divisible by $\tup{1+x}^k$.
Hence, we have proven Theorem~\ref{thm.fpols.divis} for our graph $G$
in Case 2.

We have now proven Theorem~\ref{thm.fpols.divis}
in both Cases 1 and 2; hence, Theorem~\ref{thm.fpols.divis} always
holds for our graph $G$.
This completes the induction step, and with it
the proof of Theorem~\ref{thm.fpols.divis}.
\end{proof}

\section{\label{sect.cpol}The c-polynomial}

\subsection{Definition}

Our following analysis of $\Pm{G}$ and $\Pf{G}$ will rely on certain
simple invariants of $G$ that are most
conveniently recorded under the umbrella of a polynomial. To define it, we
need the following simple lemma.%
\footnote{Recall that $\NS$ denotes the set of all nonsinks of $G$.}

\begin{lemma}
\label{lem.cpol.wd}
Assume that $E\neq\varnothing$.
Then, $\#\NS -1$ and $\#E-\#\NS$ are nonnegative integers.
\end{lemma}


\begin{proof}
The source of any arc $e\in E$ is a nonsink of $G$ (since it is the source of
an arc), and thus belongs to $\NS$. Hence, the map%
\begin{align*}
E  &  \rightarrow \NS,\\
e  &  \mapsto\left(  \text{the source of }e\right)
\end{align*}
is well-defined. This map is furthermore surjective (since each $v\in
\NS$ is a nonsink of $G$, and thus (by definition) the source of some
arc $e\in E$). Hence, we have found a surjective map from $E$ to $\NS$.
Thus, $\#E\geq\#\NS$. Therefore, $\#E-\#\NS$ is a nonnegative
integer. It remains to prove that so is $\#\NS -1$.

There exists at least one arc $e\in E$ (since $E\neq\varnothing$). Pick such
an arc $e$. Its source must be a nonsink of $G$ (since it is the source of an
arc), i.e., an element of $\NS$. Hence, the set $\NS$ has at
least one element. In other words, $\#\NS\geq1$. Hence, $\#\NS -1$
is a nonnegative integer. This completes the proof of Lemma \ref{lem.cpol.wd}.
\end{proof}

\begin{definition}
\label{def.cpol.cpol}Assume that $E\neq\varnothing$.
Then, we define the \emph{c-polynomial} of $G$ to be the polynomial
\[
c_{G}\left(  x,y\right)  :=%
\begin{cases}
0, & \text{if }G\text{ has a useless arc or a cycle};\\
x^{\#\NS -1}y^{\#E-\#\NS}, & \text{otherwise}%
\end{cases}
\]
in $\mathbb{Z}\left[  x,y\right]  $. (This polynomial is well-defined, since
Lemma \ref{lem.cpol.wd} shows that both exponents $\#\NS -1$ and
$\#E-\#\NS$ are nonnegative integers.)
\end{definition}

Note that $c_{G}\left(  x,y\right)  $ depends not only on the underlying
digraph $\left(  V,E\right)  $ but also on the vertices $s$ and $t$.

\subsection{The recursion for c-polynomials}

The most useful feature of the c-polynomial (to us) will be the following
recursive formula.

\begin{lemma}
\label{lem.cpol.rec}Assume that $G$ has no useless arcs, and that $\#E>1$.
Let $e\in E$ be an arc whose source is $s$. Then,%
\[
c_{G}\left(  x,y\right)  =xc_{G/e}\left(  x,y\right)  +yc_{G\backslash
e}\left(  x,y\right)  .
\]

\end{lemma}

\begin{proof}
From $\#E>1$, we obtain 
$E\setminus\left\{  e\right\}  \neq\varnothing$.
Hence, $c_{G/e}\tup{x,y}$ and $c_{G\backslash e}\tup{x,y}$ are defined.

If $G$ has a cycle, then so do $G/e$ and $G\backslash e$ (by Lemma
\ref{lem.G/e.B}), and thus all three polynomials $c_{G}\left(  x,y\right)  $
and $c_{G/e}\left(  x,y\right)  $ and $c_{G\backslash e}\left(  x,y\right)  $
equal $0$ (by Definition \ref{def.cpol.cpol}). Hence, in this case, the claim
of Lemma \ref{lem.cpol.rec} boils down to $0=x0+y0$, which is obvious.

Thus, we WLOG assume that $G$ has no cycles. Hence, $G$ has no useless arcs
and no cycles. Definition \ref{def.cpol.cpol} thus yields%
\begin{equation}
c_{G}\left(  x,y\right)  =x^{\#\NS -1}y^{\#E-\#\NS}.
\label{pf.lem.cpol.rec.cG=}
\end{equation}

Let $s^{\prime}$ be the target of the arc $e$. We are in one of the following
two cases:

\textit{Case 1:} The arc $e$ is not the only arc of $G$ with target
$s^{\prime}$.

\textit{Case 2:} The arc $e$ is the only arc of $G$ with target $s^{\prime}$.

Let us first consider Case 1. In this case, the arc $e$ is not the only arc
of $G$ with target $s^{\prime}$. Hence, Lemma \ref{lem:induction_step_1} shows
that $G/e$ has a useless arc. Hence, Definition \ref{def.cpol.cpol} yields
\begin{align}
c_{G/e}\left(  x,y\right)  =0.
\label{pf.lem.cpol.rec.c1.0}
\end{align}
Furthermore, the graph $G\backslash e$ is a subgraph of $G$, and thus has no
cycles (since $G$ has no cycles). If $G\backslash e$ had a useless arc, then
Lemma \ref{lem.G/e.C} would yield that the graph $G/e$ has no cycles and no
useless arcs; but this would contradict the fact that $G/e$ has a useless
arc. Thus, $G\backslash e$ has no useless arcs.

Therefore, Lemma \ref{lem.G/e.Dnew} shows that the graph $G\backslash e$
has the same nonsinks as $G$. In other words, the set of nonsinks of
$G\backslash e$ is $\NS$ (since the set of nonsinks of $G$ is
$\NS$).

We now know that the graph $G\backslash e$ has no useless arcs and no cycles,
and its set of nonsinks is $\NS$, whereas its arc set is
$E\setminus\left\{  e\right\}  $ (by its definition). Thus, Definition
\ref{def.cpol.cpol} yields
\begin{align*}
c_{G\backslash e}\left(  x,y\right)   &  =x^{\#\NS -1}y^{\#\left(
E\setminus\left\{  e\right\}  \right)  -\#\NS}\\
&  =x^{\#\NS -1}y^{\#E-1-\#\NS}\ \ \ \ \ \ \ \ \ \ \left(
\text{since }\#\left(  E\setminus\left\{  e\right\}  \right)  =\#E-1\right)  .
\end{align*}
Using this equality and using \eqref{pf.lem.cpol.rec.c1.0}, we have
\begin{align*}
x c_{G/e}\left(  x,y\right) + y c_{G\backslash e} \tup{x,y}
&  =x0+yx^{\#\NS -1}y^{\#E-1-\#\NS} \\
& =yx^{\#\NS -1}y^{\#E-1-\#\NS}=x^{\#\NS -1}y^{\#E-\#\NS}.
\end{align*}
Comparing this with \eqref{pf.lem.cpol.rec.cG=}, we obtain $c_{G}\left(
x,y\right)  =xc_{G/e}\left(  x,y\right)  +yc_{G\backslash e}\left(
x,y\right)  $. Hence, Lemma \ref{lem.cpol.rec} is proved in Case 1.

Let us now consider Case 2. In this case, the arc $e$ is the only arc of $G$
with target $s^{\prime}$. Moreover, the arc $e$ of $G$ is not useless (since
$G$ has no useless arcs). Hence, Lemma \ref{lem:induction_step_2b} yields
that $G\backslash e$ has a useless arc. Therefore,
Definition \ref{def.cpol.cpol} yields
\begin{align}
c_{G\backslash e}\left(  x,y\right)  = 0 .
\label{pf.lem.cpol.rec.c2.0}
\end{align}

Furthermore, Lemma \ref{lem.G/e.C} shows that the graph $G/e$ has no cycles
and no useless arcs.

Let $V^{\prime\prime}$ be the set of all nonsinks of $G/e$. Lemma
\ref{lem.G/e.basics} (e) yields that $s^{\prime}\neq t$. Thus, Lemma
\ref{lem.G/e.basics} (d) shows that the graph $G/e$ has exactly one fewer
nonsink than $G$. In other words, $\#V^{\prime\prime}=\#\NS -1$.
Next, recall that $G/e$ has no cycles
and no useless arcs, and the set of all nonsinks of $G/e$ is $V^{\prime
\prime}$, whereas the arc set of $G/e$ is $E\setminus\left\{  e\right\}  $.
Thus, Definition \ref{def.cpol.cpol} yields%
\begin{align*}
c_{G/e}\left(  x,y\right)   &  =x^{\#V^{\prime\prime}-1}y^{\#\left(
E\setminus\left\{  e\right\}  \right)  -\#V^{\prime\prime}}\\
&  =x^{\left(  \#\NS -1\right)  -1}y^{\left(  \#E-1\right)  -\left(
\#\NS -1\right)  }\ \ \ \ \ \ \ \ \ \ \left(
\begin{array}
[c]{c}%
\text{since }\#V^{\prime\prime}=\#\NS -1\\
\text{and }\#\left(  E\setminus\left\{  e\right\}  \right)  =\#E-1
\end{array}
\right) \\
&  =x^{\#\NS -2}y^{\#E-\#\NS}.
\end{align*}
Using this equality and using \eqref{pf.lem.cpol.rec.c2.0}, we obtain
\begin{align*}
x c_{G/e}\left(  x,y\right)
+ y c_{G\backslash e}\left(  x,y\right)
&  =xx^{\#\NS -2}y^{\#E-\#\NS}+y0 \\
&= xx^{\#\NS -2} y^{\#E-\#\NS}
=x^{\#\NS -1}y^{\#E-\#\NS}.
\end{align*}
Comparing this with \eqref{pf.lem.cpol.rec.cG=}, we obtain $c_{G}\left(
x,y\right)  =xc_{G/e}\left(  x,y\right)  +yc_{G\backslash e}\left(
x,y\right)  $. Hence, Lemma \ref{lem.cpol.rec} is proved in Case 2.

We have now proved Lemma \ref{lem.cpol.rec} in all cases.
\end{proof}

\section{\label{sect.euler-proof}Proof of the Euler characteristic}

We shall now work towards computing the Euler characteristics of our complexes.

\subsection{General facts about Euler characteristics}

We begin with some general properties of reduced Euler characteristics.
Recall that these were defined in Subsection~\ref{subsect.main.euler}.

\begin{lemma} \label{lem.chitil.rec}
Let $\Delta$ be a simplicial complex on the ground set $W$. Let $w \in W$. Then,
\[
\chitil\tup{\Delta} = \chitil\tup{\dl_\Delta(w)} - \chitil\tup{\lk_\Delta(w)} .
\]
\end{lemma}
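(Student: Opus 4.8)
The plan is to reduce this to the already-established recurrence for f-polynomials, namely Lemma~\ref{lem.fpol.rec}, together with the relation \eqref{eq.fDelta.-1} that $f_\Delta(-1) = -\chitil(\Delta)$. This is the cleanest route since all the combinatorial bookkeeping has already been done.

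First I would recall Lemma~\ref{lem.fpol.rec}, which states that
\[
f_{\Delta}(x) = f_{\dl_\Delta(w)}(x) + x\, f_{\lk_\Delta(w)}(x) .
\]
Then I would simply substitute $x = -1$ into this polynomial identity. On the left we get $f_\Delta(-1)$, and on the right we get $f_{\dl_\Delta(w)}(-1) + (-1) f_{\lk_\Delta(w)}(-1) = f_{\dl_\Delta(w)}(-1) - f_{\lk_\Delta(w)}(-1)$. Applying \eqref{eq.fDelta.-1} to each of the three terms converts this into $-\chitil(\Delta) = -\chitil(\dl_\Delta(w)) - \bigl(-\chitil(\lk_\Delta(w))\bigr)$, and multiplying through by $-1$ gives exactly the claimed identity $\chitil(\Delta) = \chitil(\dl_\Delta(w)) - \chitil(\lk_\Delta(w))$.

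One small subtlety to watch: \eqref{eq.fDelta.-1} is an unconditional identity $f_\Delta(-1) = -\chitil(\Delta)$ for every simplicial complex (including the empty complex and the irrelevant complex $\{\varnothing\}$), so there is no case distinction needed even when $\dl_\Delta(w)$ or $\lk_\Delta(w)$ happens to be empty. Alternatively, if one preferred a self-contained argument not routing through f-polynomials, one could argue directly from the definition \eqref{eq.chitil.def}: split the sum $\sum_{I \in \Delta} (-1)^{\#I-1}$ according to whether $w \in I$, use Proposition~\ref{prop.star.basics}(c) to identify the $w \notin I$ part with $\chitil(\dl_\Delta(w))$, and use the bijection $I \mapsto I \setminus \{w\}$ from Proposition~\ref{prop.star.basics}(e) to rewrite the $w \in I$ part, noting that this bijection shifts cardinality by one and hence introduces a sign flip, yielding $-\chitil(\lk_\Delta(w))$.

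There is no real obstacle here — the statement is essentially a formal consequence of results already in hand, and the only thing to be careful about is the sign conventions in the reduced Euler characteristic and the direction of the subtraction. I would present the short proof via Lemma~\ref{lem.fpol.rec} and \eqref{eq.fDelta.-1}, as it is the least error-prone and makes the logical dependence transparent.
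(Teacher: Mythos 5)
Your proposal is correct and takes essentially the same route as the paper: substitute $x=-1$ into Lemma~\ref{lem.fpol.rec} and invoke \eqref{eq.fDelta.-1} on each of the three terms. The alternative direct argument you sketch (splitting $\sum_{I \in \Delta}(-1)^{\#I-1}$ by whether $w \in I$) is also sound, but the paper does not use it.
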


\begin{proof}
Lemma~\ref{lem.fpol.rec} yields
$f_{\Delta}\tup{x} = f_{\dl_\Delta(w)}\tup{x} + x f_{\lk_\Delta(w)}\tup{x}$.
Substituting $-1$ for $x$ on both sides of this equality,
we find
$f_{\Delta}\tup{-1} = f_{\dl_\Delta(w)}\tup{-1} - f_{\lk_\Delta(w)}\tup{-1}$.
In view of \eqref{eq.fDelta.-1}, this rewrites as
$-\chitil\tup{\Delta} = -\chitil\tup{\dl_\Delta(w)} - \tup{-\chitil\tup{\lk_\Delta(w)}}$.
Multiplying both sides of this equality by $-1$, we obtain precisely
the claim of Lemma~\ref{lem.chitil.rec}.
\end{proof}

\begin{lemma} \label{lem.chitil.dual}
Let $\Delta$ be a simplicial complex on a nonempty ground set $W$. Then,
\[
\chitil\tup{\Delta^\vee} = \tup{-1}^{\# W - 1} \chitil\tup{\Delta} .
\]
\end{lemma}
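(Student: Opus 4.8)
The plan is to deduce this from the f-polynomial version, Lemma~\ref{lem.fpol.dual}, which is already proved in the excerpt (in the \texttt{verlong} block): it states that $f_{\Delta^\vee}(x) = (1+x)^{\# W} - x^{\# W} f_\Delta(1/x)$ for any simplicial complex $\Delta$ on a nonempty ground set $W$. Since $\chitil(\Delta) = -f_\Delta(-1)$ by \eqref{eq.fDelta.-1}, I would substitute $x = -1$ into this identity, carefully tracking the signs.

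Concretely, set $n = \# W \geq 1$. Evaluating the right-hand side at $x = -1$ gives $(1+(-1))^n - (-1)^n f_\Delta(1/(-1)) = 0^n - (-1)^n f_\Delta(-1)$. Here $0^n = 0$ because $n \geq 1$ (this is precisely where the nonemptiness hypothesis is used). Hence $f_{\Delta^\vee}(-1) = -(-1)^n f_\Delta(-1)$. Now rewrite both sides using \eqref{eq.fDelta.-1}: the left side is $-\chitil(\Delta^\vee)$ and the right side is $-(-1)^n \cdot (-\chitil(\Delta)) = (-1)^n \chitil(\Delta)$. Therefore $-\chitil(\Delta^\vee) = (-1)^n \chitil(\Delta)$, and multiplying through by $-1$ yields $\chitil(\Delta^\vee) = -(-1)^n \chitil(\Delta) = (-1)^{n-1} \chitil(\Delta)$, which is exactly the claim with $n = \# W$.

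Since the paper is written in both a short and a long version (via the \texttt{verlong}/\texttt{vershort} toggle), and Lemma~\ref{lem.fpol.dual} sits inside a \texttt{verlong} block, I should also provide a self-contained argument that does not route through the f-polynomial, for the short version. The direct route is a double-counting argument on reduced Euler characteristics: using \eqref{eq.chitil.def}, write $\chitil(\Delta^\vee) = \sum_{I \in \Delta^\vee} (-1)^{\# I - 1}$, then use that $I \mapsto W \setminus I$ is a bijection from $2^W \setminus \Delta$ onto $\Delta^\vee$ to get $\chitil(\Delta^\vee) = \sum_{I \in 2^W \setminus \Delta} (-1)^{\# W - \# I - 1}$. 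Splitting $\sum_{I \in 2^W}$ as $\sum_{I \in \Delta} + \sum_{I \in 2^W \setminus \Delta}$ and using $\sum_{I \in 2^W} (-1)^{\# W - \# I - 1} = (-1)^{\# W - 1} \sum_{k=0}^{\# W} \binom{\# W}{k}(-1)^k = (-1)^{\# W-1}(1-1)^{\# W} = 0$ (again needing $\# W \geq 1$), one obtains $\chitil(\Delta^\vee) = -\sum_{I \in \Delta} (-1)^{\# W - \# I - 1} = (-1)^{\# W} \sum_{I \in \Delta} (-1)^{-\# I - 1} \cdot(-1)^{-1}\cdot(-1) = (-1)^{\# W - 1}\sum_{I\in\Delta}(-1)^{\#I-1} = (-1)^{\# W - 1}\chitil(\Delta)$, after tidying the sign bookkeeping.

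I expect the only real obstacle to be getting the signs exactly right — there is nothing deep here, just a careful application of the binomial theorem at $x = -1$ and the relation $\chitil = -f_\Delta(-1)$, with the nonemptiness of $W$ invoked to ensure $0^{\# W} = 0$ so that the binomial sum vanishes. I would present the f-polynomial deduction as the main proof inside a \texttt{verlong} block (right after Lemma~\ref{lem.fpol.dual}) and the direct double-counting argument as the proof used in the \texttt{vershort} version, so that both versions of the paper are complete.
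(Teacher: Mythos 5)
Your second argument (the direct double-counting) is essentially the paper's own proof: same bijection $I \mapsto W \setminus I$ from $2^W \setminus \Delta$ to $\Delta^\vee$, same use of the vanishing $\sum_{I \in 2^W} (-1)^{\# I} = 0$ for nonempty $W$, same final sign comparison. (The chain of intermediate equalities in your writeup is garbled in the middle --- the expression $(-1)^{\# W} \sum_{I} (-1)^{-\# I - 1} \cdot (-1)^{-1} \cdot (-1)$ is not a transparent rewriting of $-\sum_I (-1)^{\# W - \# I - 1}$ --- but the two endpoints agree, so the calculation is correct once tidied.) Your first argument, deducing the lemma from Lemma~\ref{lem.fpol.dual} by evaluating at $x = -1$, is a valid alternative route; you correctly note that it is only available in the long version, since Lemma~\ref{lem.fpol.dual} sits in a \texttt{verlong} block that the compiled short version excludes --- and this is presumably exactly why the paper gives the self-contained double-counting proof instead. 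If you do use the f-polynomial route, it is worth spelling out that $x^{\# W} f_\Delta(1/x) = \sum_{I \in \Delta} x^{\# W - \# I}$ is a genuine polynomial (since every face $I$ satisfies $\# I \leq \# W$), so both sides of the identity in Lemma~\ref{lem.fpol.dual} are polynomials and evaluation at $x = -1$ is legitimate.
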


\begin{proof}
By definition, the faces of $\Delta^\vee$
are the complements (in $W$) of the subsets of $W$ that do not belong to $\Delta$.
Hence, the map $2^W \setminus \Delta \to \Delta^\vee, \  I \mapsto W \setminus I$
is a bijection.
Thus, 
\begin{align}
\chitil\tup{\Delta^\vee} &= \sum_{I \in \Delta^\vee} \tup{-1}^{\# I - 1} =  \sum_{I \in 2^W \setminus \Delta} \tup{-1}^{\# \tup{W \setminus I} - 1} \nonumber\\
&=  \tup{-1}^{\# W - 1} \sum_{I \in 2^W \setminus \Delta} \tup{-1}^{\# I }.
\label{pf.lem.chitil.dual.1}
\end{align}
Since $W$ is nonempty, we have $\sum_{I \in 2^W} \tup{-1}^{\# I} = 0$, and therefore 
\[
\sum_{I \in 2^W \setminus \Delta} \tup{-1}^{\# I } = - \sum_{I \in \Delta} \tup{-1}^{\# I } = \sum_{I \in \Delta} \tup{-1}^{\# I - 1 } = \chitil\tup{\Delta}.
\]
Substituting this into \eqref{pf.lem.chitil.dual.1}, we obtain Lemma~\ref{lem.chitil.dual}.
\end{proof}

\begin{verlong}
\begin{lemma} \label{lem.chitil.sphere}
Let $W$ be a nonempty finite set.
Let $\Delta$ be the simplicial complex on the ground set $W$ that consists of all proper subsets of $W$.
Then, $\chitil\tup{\Delta} = \tup{-1}^{\# W}$.
\end{lemma}

\begin{proof}
Let $\Gamma$ be the simplicial complex on the ground set $W$ whose only face is the empty set $\varnothing$.
Then, it is easy to see that $\chitil\tup{\Gamma} = -1$ and that $\Gamma = \Delta^\vee$. Hence, $\chitil\tup{\Gamma} = \chitil\tup{\Delta^\vee} = \tup{-1}^{\# W - 1} \chitil\tup{\Delta}$ (by Lemma~\ref{lem.chitil.dual}).
Comparing this with $\chitil\tup{\Gamma} = -1$, we find $\tup{-1}^{\# W - 1} \chitil\tup{\Delta} = -1$ and thus
$\chitil\tup{\Delta} = \tup{-1}^{\# W}$.
This proves Lemma~\ref{lem.chitil.sphere}.
\end{proof}

\begin{lemma} \label{lem.chitil.ball}
Let $W$ be a finite set.
Let $\Delta$ be the simplicial complex on the ground set $W$ that consists of all subsets of $W$.
Then, $\chitil\tup{\Delta} = 0$ if $W$ is nonempty, and $\chitil\tup{\Delta} = -1$ otherwise.
\end{lemma}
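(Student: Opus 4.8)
The plan is to compute $\chitil\tup{\Delta}$ directly from the definition \eqref{eq.chitil.def}, using that $\Delta$ here is the full power set $2^W$. Summing over all faces and pulling out a sign, we get
\[
\chitil\tup{\Delta}
= \sum_{I \in \Delta} \tup{-1}^{\# I - 1}
= \sum_{I \subseteq W} \tup{-1}^{\# I - 1}
= - \sum_{I \subseteq W} \tup{-1}^{\# I}.
\]
Grouping the subsets of $W$ by their cardinality and invoking the binomial theorem gives $\sum\limits_{I \subseteq W} \tup{-1}^{\# I} = \sum\limits_{k=0}^{\# W} \dbinom{\# W}{k} \tup{-1}^{k} = \tup{1 + \tup{-1}}^{\# W}$, which equals $0$ when $\# W \geq 1$ and equals $1$ when $\# W = 0$ (with the convention $0^0 = 1$). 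Hence $\chitil\tup{\Delta} = 0$ if $W$ is nonempty and $\chitil\tup{\Delta} = -1$ otherwise, which is exactly the claim.

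Alternatively, I would reuse the bookkeeping already set up: by \eqref{eq.fDelta.-1} we have $\chitil\tup{\Delta} = - f_\Delta\tup{-1}$, and here $f_\Delta\tup{x} = \sum\limits_{I \subseteq W} x^{\# I} = \tup{1+x}^{\# W}$, so $\chitil\tup{\Delta} = - \tup{1 + \tup{-1}}^{\# W}$ and the same case analysis finishes it. In the nonempty case there is also a slicker route via Lemma~\ref{lem.chitil.dual}: the Alexander dual of $\Delta = 2^W$ is the empty complex $\varnothing$ (there is no $F \subseteq W$ with $W \setminus F \notin 2^W$), whose reduced Euler characteristic is the empty sum $0$, so $\chitil\tup{\Delta} = \tup{-1}^{\# W - 1} \chitil\tup{\varnothing} = 0$.

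There is essentially no obstacle here; the only thing worth stating carefully is the degenerate case $W = \varnothing$, where $\Delta = \set{\varnothing}$ has the single face $\varnothing$ of size $0$, contributing $\tup{-1}^{0-1} = -1$ to the sum — consistent with the binomial computation under $0^0 = 1$. I would therefore present the direct binomial argument as the main line and flag the boundary case $W = \varnothing$ explicitly.
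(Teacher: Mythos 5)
Your proof is correct, but it takes a different primary route from the paper's. You compute $\chitil(\Delta)$ directly from the definition \eqref{eq.chitil.def}, grouping subsets of $W$ by cardinality and invoking the binomial theorem to obtain $-\tup{1+(-1)}^{\#W}$, which handles the cases $W \neq \varnothing$ and $W = \varnothing$ uniformly (under the convention $0^0 = 1$). The paper instead disposes of the case $W = \varnothing$ as ``easy'' and, for nonempty $W$, observes that the Alexander dual of $\Delta = 2^W$ is the complex $\Gamma$ with no faces at all, whose reduced Euler characteristic is the empty sum $0$; Lemma~\ref{lem.chitil.dual} then gives $\chitil(\Delta) = \tup{-1}^{\#W-1}\cdot 0 = 0$. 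This is precisely the ``slicker route via Lemma~\ref{lem.chitil.dual}'' you mention as an alternative, so you have in fact recovered the paper's argument as well. The paper's choice is motivated by symmetry with the proof of the neighboring Lemma~\ref{lem.chitil.sphere}, which uses the same duality trick with $\Gamma = \set{\varnothing}$ in place of $\Gamma = \varnothing$; your direct computation is more elementary and self-contained, at the small cost of leaning on the $0^0$ convention to absorb the boundary case, which you rightly flag.
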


\begin{proof}
We WLOG assume that $W$ is nonempty (since the other case is easy).
Let $\Gamma$ be the simplicial complex on the ground set $W$ that has no faces at all.
Then, it is easy to see that $\chitil\tup{\Gamma} = 0$ and that $\Gamma = \Delta^\vee$. Hence, $\chitil\tup{\Gamma} = \chitil\tup{\Delta^\vee} = \tup{-1}^{\# W - 1} \chitil\tup{\Delta}$ (by Lemma~\ref{lem.chitil.dual}).
Comparing this with $\chitil\tup{\Gamma} = 0$, we find $\tup{-1}^{\# W - 1} \chitil\tup{\Delta} = 0$ and thus
$\chitil\tup{\Delta} = 0$.
This proves Lemma~\ref{lem.chitil.ball}.
\end{proof}
\end{verlong}

\begin{lemma} \label{lem.chitil.cone}
Let $\Delta$ be a simplicial complex on a ground set $W$. Let $w \in W$. Assume that $\Delta$ is a cone with apex $w$.
Then, $\chitil\tup{\Delta} = 0$.
\end{lemma}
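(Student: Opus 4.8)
The plan is to deduce this immediately from the f-polynomial identity for cones that has already been established. First I would apply Lemma~\ref{lem.fpol.cone}~(b) to the same data $W$, $w$, $\Delta$; this gives $f_{\Delta}(x) = (1+x)\, f_{\lk_\Delta(w)}(x)$ as an identity of polynomials. Substituting $x = -1$ into both sides yields $f_{\Delta}(-1) = 0$. On the other hand, \eqref{eq.fDelta.-1} says $f_{\Delta}(-1) = -\chitil(\Delta)$. Comparing the two gives $\chitil(\Delta) = 0$, which is exactly the claim.

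An equally short alternative avoids f-polynomials entirely: by Lemma~\ref{lem.fpol.cone}~(a), the cone hypothesis forces $\dl_\Delta(w) = \lk_\Delta(w)$, and then the recurrence of Lemma~\ref{lem.chitil.rec}, namely $\chitil(\Delta) = \chitil(\dl_\Delta(w)) - \chitil(\lk_\Delta(w))$, collapses to $0$. Either route is a two-line argument once the cited lemmas are in place.

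I do not anticipate any real obstacle here, since all the substantive work sits inside Lemma~\ref{lem.fpol.rec} and Lemma~\ref{lem.fpol.cone}. The only point worth a passing remark is the degenerate case $\Delta = \varnothing$: it is (vacuously) a cone with apex $w$, and $\chitil(\varnothing) = 0$ directly from the empty sum in \eqref{eq.chitil.def}, so the statement holds there too; and in fact the cited lemmas were stated without a nonemptiness hypothesis, so the main argument already covers this case without comment.
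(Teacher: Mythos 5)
Your main argument is exactly the paper's proof: apply Lemma~\ref{lem.fpol.cone}~(b), evaluate at $x = -1$, and translate via \eqref{eq.fDelta.-1}. The alternative via Lemma~\ref{lem.fpol.cone}~(a) and Lemma~\ref{lem.chitil.rec} and the remark on $\Delta = \varnothing$ are both correct but not needed; the proposal is sound and matches the paper's approach.
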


\begin{proof}
Lemma~\ref{lem.fpol.cone} (b) yields
$f_{\Delta}\tup{x} = \tup{1+x} f_{\lk_\Delta(w)}\tup{x}$.
Substituting $-1$ for $x$ on both sides of this equality, we obtain
$f_{\Delta}\tup{-1} = \underbrace{\tup{1+\tup{-1}}}_{=0} f_{\lk_\Delta(w)}\tup{-1} = 0$.
In view of \eqref{eq.fDelta.-1}, this rewrites as
$- \chitil\tup{\Delta} = 0$. Thus,
$\chitil\tup{\Delta} = 0$.
This proves Lemma~\ref{lem.chitil.cone}.
\end{proof}

\subsection{The Euler characteristics of $\Pm{G}$ and $\Pf{G}$}

Recall the c-polynomial $c_{G}\left(  x,y\right)  $ introduced in Definition
\ref{def.cpol.cpol}. We shall now restate Theorem \ref{thm.PM.chitil} in the
form that is most convenient for our proof.

\begin{lemma}
\label{lem.PM.chitil-c}
Assume that $E \neq \varnothing$. Then,
\[
\widetilde{\chi}\left(  \Pm{G}\right)
= - c_G \left( 1,-1\right)  .
\]

\end{lemma}

This lemma will quickly yield the original form of Theorem \ref{thm.PM.chitil}
(once we compute $c_{G}\left(  1,-1\right)  $ and handle the $E=\varnothing$
case by hand).

\begin{proof}[Proof of Lemma \ref{lem.PM.chitil-c}.]
We proceed in two steps:

\textit{Step 1:} We claim that Lemma \ref{lem.PM.chitil-c} is true when $G$
has a useless arc.

Indeed, assume that $G$ has a useless arc $e$. Then, Lemma~\ref{lem:dumbarc}
shows that $\Pm{G}$ is a cone with apex $e$, and therefore
Lemma~\ref{lem.chitil.cone} (applied to $W=E$, $\Delta=\Pm{G}$ and
$w=e$) shows that $\widetilde{\chi}\left(  \Pm{G}\right)  =0$.
Meanwhile, Definition \ref{def.cpol.cpol} yields $c_{G}\left(  x,y\right)  =0$
(since $G$ has a useless arc) and therefore $c_{G}\left(  1,-1\right)  =0$.
Therefore, $-c_{G}\left(  1,-1\right)  =-0=0$. Comparing this with
$\widetilde{\chi}\left(  \Pm{G}\right)  =0$, we see that
$\widetilde{\chi}\left(  \Pm{G}\right)  =-c_{G}\left(
1,-1\right)  $. Hence, we have proved Lemma \ref{lem.PM.chitil-c} in the case
when $G$ has a useless arc.

\textit{Step 2:} Let us now prove Lemma \ref{lem.PM.chitil-c} in general.

We proceed by induction on the positive integer $\#E$ (this is a positive
integer, since $E\neq\varnothing$).

\textit{Base case:} We must show that Lemma \ref{lem.PM.chitil-c} holds when
$\#E=1$.

Indeed, assume that $\#E=1$. Thus, $E=\left\{  e\right\}  $ for some arc $e$
of $G$. Consider this arc $e$. If $G$ has a useless arc, then we already
know (from Step 1) that Lemma \ref{lem.PM.chitil-c} is true. Thus, we WLOG
assume that $G$ has no useless arcs. Hence, the arc $e$ is not useless. In
other words, $e$ is contained in an $s-t$-path of $G$. This $s-t$-path cannot
have any other arcs beyond $e$ (since $E=\left\{  e\right\}  $), and thus
must consist of the arc $e$ alone. Thus, the source of $e$ is $s$, whereas
the target of $e$ is $t$. Moreover, the arc $e$ cannot be a self-loop (since
it is contained in a path), and thus we have $s\neq t$. Thus, the digraph $G$
consists of the single arc $s\overset{e}{\longrightarrow}t$ and a (possibly
empty) set of other vertices but no other arcs (since $E=\left\{  e\right\}
$). Thus, we can describe $\Pm{G}$ explicitly: The
only subsets of $E$ are $\varnothing$ and $\left\{  e\right\}  $ (since
$E=\left\{  e\right\}  $), and we have $\varnothing\in\Pm{G}$
(since $E\setminus\varnothing=E$ contains an $s-t$-path) but
$\left\{  e\right\}  \notin\Pm{G}$ (since
$E\setminus\left\{  e\right\}  =\varnothing$ contains no $s-t$-path (because
$s\neq t$)). Hence, $\Pm{G}=\left\{  \varnothing
\right\}  $, so that $\widetilde{\chi}\left(  \Pm{G}
\right)  =\widetilde{\chi}\left(  \left\{  \varnothing\right\}  \right)  =-1$.
On the other hand, our description of $G$ shows that $G$ has no useless arcs
and no cycles, and has exactly $1$ arc (namely, $e$) and exactly $1$ nonsink
(namely, $s$). Definition \ref{def.cpol.cpol} thus shows that $c_{G}\left(
x,y\right)  =x^{1-1}y^{1-1}=x^{0}y^{0}=1$. Hence, $c_{G}\left(  1,-1\right)
=1$, so that $-c_{G}\left(  1,-1\right)  =-1$. Comparing this with
$\widetilde{\chi}\left(  \Pm{G}\right)  =-1$, we
obtain $\widetilde{\chi}\left(  \Pm{G}\right)
=-c_{G}\left(  1,-1\right)  $. This shows that Lemma \ref{lem.PM.chitil-c}
holds for our $G$. Thus, Lemma \ref{lem.PM.chitil-c} is proved when $\#E=1$.
This completes the base case.

\textit{Induction step:} We fix a directed graph $G=\left(  V,E,s,t\right)  $
with $\#E>1$, and we assume (as induction hypothesis) that Lemma
\ref{lem.PM.chitil-c} is true for all graphs with exactly $\#E-1$ arcs.
We must now prove Lemma \ref{lem.PM.chitil-c} for our graph $G$.

If $G$ has a useless arc, then we already know (from Step 1) that Lemma
\ref{lem.PM.chitil-c} is true. Thus, we WLOG assume that $G$ has no useless
arcs. However, $E\neq\varnothing$. Thus, there exists some $f\in E$. Consider
this $f$. The arc $f$ cannot be useless (since $G$ has no useless arcs), and
thus is contained in an $s-t$-path of $G$. This path has at least one arc
(since it contains $f$), and thus has a first arc. Let $e$ be this first
arc. Then, the arc $e\in E$ has source $s$ (since it is the first arc of an
$s-t$-path). Lemma \ref{lem.cpol.rec} thus yields%
\[
c_{G}\left(  x,y\right)  =xc_{G/e}\left(  x,y\right)  +yc_{G\backslash
e}\left(  x,y\right)  .
\]
Substituting $1$ and $-1$ for $x$ and $y$ in this equality, we find%
\begin{align}
c_{G}\left(  1,-1\right)    & =1c_{G/e}\left(  1,-1\right)  +\left(
-1\right)  c_{G\backslash e}\left(  1,-1\right)  \nonumber\\
& =c_{G/e}\left(  1,-1\right)  -c_{G\backslash e}\left(  1,-1\right)
.\label{pf.lem.PM.chitil-c.c-rec}%
\end{align}

However, $\#E>1$ entails $E\not \subseteq \left\{  e\right\}  $, thus
$E\setminus\left\{  e\right\}  \neq\varnothing$. The graph $G/e$ has arc set
$E\setminus\left\{  e\right\}  $, whose size is $\#\left(  E\setminus\left\{
e\right\}  \right)  =\#E-1$. Thus, by our induction hypothesis, Lemma
\ref{lem.PM.chitil-c} is true for $G/e$ instead of $G$. In other words, we
have%
\begin{equation}
\widetilde{\chi}\left(  \Pm{G/e}\right)  =-c_{G/e}\left(
1,-1\right)  .\label{pf.lem.PM.chitil-c.IH1}%
\end{equation}
The same argument (applied to $G\backslash e$ instead of $G/e$) shows that%
\begin{equation}
\widetilde{\chi}\left(  \Pm{G\backslash e}\right)  =-c_{G\backslash
e}\left(  1,-1\right)  .\label{pf.lem.PM.chitil-c.IH2}%
\end{equation}

But Lemma~\ref{lem.chitil.rec} (applied to $W=E$, $\Delta=\Pm{G}$
and $w=e$) yields
\begin{align*}
\widetilde{\chi}\left(  \Pm{G}\right)   &  =\widetilde{\chi}\left(
\dl_{\Pm{G}}\left(  e\right)  \right)  -\widetilde{\chi}\left(
\lk_{\Pm{G}}\left(  e\right)  \right)  \\
&  =\widetilde{\chi}\left(  \Pm{G/e}  \right)
-\widetilde{\chi}\left(  \Pm{G \backslash e}  \right)
\end{align*}
(since $\dl_{\Pm{G}}\left(  e\right)  = \Pm{G/e}$
by Lemma~\ref{lem:delcon} (b), and since
$\lk_{\Pm{G}}\left(  e\right)  = \Pm{G \backslash e}  $ by
Lemma~\ref{lem:delcon} (a)). In view of \eqref{pf.lem.PM.chitil-c.IH1} and
\eqref{pf.lem.PM.chitil-c.IH2}, we can rewrite this as
\begin{align*}
\widetilde{\chi}\left(  \Pm{G}\right)    & =\left(  -c_{G/e}\left(
1,-1\right)  \right)  -\left(  -c_{G\backslash e}\left(  1,-1\right)  \right)
\\
& =-\underbrace{\left(  c_{G/e}\left(  1,-1\right)  -c_{G\backslash e}\left(
1,-1\right)  \right)  }_{=c_{G}\left(  1,-1\right)  \quad \text{(by
\eqref{pf.lem.PM.chitil-c.c-rec})}}=-c_{G}\left(  1,-1\right)  .
\end{align*}
This shows that Lemma \ref{lem.PM.chitil-c} holds for our $G$. This completes
the induction step. Thus, Lemma \ref{lem.PM.chitil-c} is proved by induction.
\end{proof}

We can now prove Theorem~\ref{thm.PM.chitil} in its original form:

\begin{statement} \textbf{Theorem~\ref{thm.PM.chitil}:}
\begin{enumerate}
    \item[(a)] If $G$ contains a useless arc or a cycle or satisfies $\tup{E = \varnothing \text{ and } s \neq t}$, then $\chitil\tup{\Pm{G}} = 0$.
    \item[(b)] Otherwise, $\chitil\tup{\Pm{G}} = \tup{-1}^{\# E - \# \NS + 1}$.
\end{enumerate}
\end{statement}

\begin{proof}[Proof of Theorem \ref{thm.PM.chitil}.]
We are in one of the following four cases:

\textit{Case 1:} We have $E=\varnothing$ and $s=t$.

\textit{Case 2:} We have $E=\varnothing$ and $s\neq t$.

\textit{Case 3:} We have $E\neq\varnothing$, and the graph $G$ has a useless
arc or a cycle.

\textit{Case 4:} We have $E\neq\varnothing$, and the graph $G$ has no useless
arcs and no cycles.

Let us first consider Case 1. In this case, we have $E=\varnothing$ and $s=t$.
Thus, the graph $G$ has no arcs (since $E=\varnothing$). Moreover, $s=t$
shows that the set $\varnothing\setminus\varnothing$ contains an $s-t$-path
(namely, the trivial path, with no arcs at all). Hence, $\varnothing
\in\Pm{G}$, so that $\Pm{G}
=\left\{  \varnothing\right\}  $. Therefore, $\widetilde{\chi}\left(
\Pm{G}\right)  =\widetilde{\chi}\left(  \left\{
\varnothing\right\}  \right)  =-1$. On the other hand, $G$ has no nonsinks
(since $G$ has no arcs); thus, $\NS=\varnothing$ and therefore
$\#\NS=0$. Combined with $\#E=0$ (since $E=\varnothing$), this yields
$\left(  -1\right)  ^{\#E-\#\NS+1}=\left(  -1\right)  ^{0-0+1}=-1$.
Comparing this with $\widetilde{\chi}\left(  \Pm{G}
\right)  =-1$, we obtain $\widetilde{\chi}\left(  \Pm{G}  \right)
=\left(  -1\right)  ^{\#E-\#\NS+1}$, which is
precisely the value that Theorem \ref{thm.PM.chitil} (b) predicts for
$\widetilde{\chi}\left(  \Pm{G} \right)  $. Thus,
Theorem \ref{thm.PM.chitil} is proved in Case 1.

Let us next consider Case 2. In this case, we have $E=\varnothing$ and $s\neq
t$. Thus, the graph $G$ has no arcs (since $E=\varnothing$). Moreover, $s\neq
t$ shows that any $s-t$-path must contain at least one arc. Hence, $G$ has no
$s-t$-path (since $G$ has no arcs). Therefore, $\Pm{G}
=\varnothing$, so that $\widetilde{\chi}\left(  \Pm{G}
\right)  =\widetilde{\chi}\left(  \varnothing\right)  =0$. But this is
precisely the value that Theorem \ref{thm.PM.chitil} (a) predicts for
$\widetilde{\chi}\left(  \Pm{G} \right)  $ (since $E=\varnothing$ and $s\neq
t$). Thus, Theorem \ref{thm.PM.chitil} is proved in Case 2.

Next, let us consider Case 3. In this case, we have $E\neq\varnothing$, and
the graph $G$ has a useless arc or a cycle. Hence, Definition
\ref{def.cpol.cpol} yields $c_{G}\left(  x,y\right)  =0$. Substituting $1$ and
$-1$ for $x$ and $y$ in this equality, we find $c_{G}\left(  1,-1\right)  =0$.
However, Lemma \ref{lem.PM.chitil-c} yields%
\[
\widetilde{\chi}\left(  \Pm{G}\right)
= - c_{G}\left(  1,-1\right)
=0
\qquad \left(\text{since }c_{G}\left(  1,-1\right) = 0\right).
\]
But this is precisely the value that Theorem \ref{thm.PM.chitil} (a)
predicts for $\widetilde{\chi}\left(  \Pm{G}\right)  $. Thus, Theorem
\ref{thm.PM.chitil} is proved in Case 3.

Finally, let us consider Case 4. In this case, we have $E\neq\varnothing$, and
the graph $G$ has no useless arcs and no cycles. Hence, Definition
\ref{def.cpol.cpol} yields $c_{G}\left(  x,y\right)  =x^{\#\NS%
-1}y^{\#E-\#\NS}$. Substituting $1$ and $-1$ for $x$ and $y$ in this
equality, we find $c_{G}\left(  1,-1\right)
=\left(  -1\right)  ^{\#E-\#\NS}$.
However, Lemma \ref{lem.PM.chitil-c} yields%
\[
\widetilde{\chi}\left(  \Pm{G}\right)
=-\underbrace{c_{G}\left(  1,-1\right)  }_{=\left(  -1\right)
^{\#E-\#\NS}}=-\left(  -1\right)  ^{\#E-\#\NS}=\left(
-1\right)  ^{\#E-\#\NS -1}.
\]
But this is precisely the value that Theorem \ref{thm.PM.chitil} (b)
predicts for $\widetilde{\chi}\left(  \Pm{G}\right)  $. Thus, Theorem
\ref{thm.PM.chitil} is proved in Case 4.

We have now proved Theorem \ref{thm.PM.chitil} in all four cases.
\end{proof}

It is even easier to prove Theorem~\ref{thm.PF.chitil}:

\begin{statement} \textbf{Theorem~\ref{thm.PF.chitil}:}
If $E \neq \varnothing$, then:
\begin{enumerate}
    \item[(a)] If $G$ contains a useless arc or a cycle, then $\chitil\tup{\Pf{G}} = 0$.
    \item[(b)] Otherwise, $\chitil\tup{\Pf{G}} = \tup{-1}^{\# \NS}$.
\end{enumerate}
If $E = \varnothing$, then $\chitil\tup{\Pf{G}}$ equals $0$ if $s = t$ and $-1$ otherwise.
\end{statement}

\begin{proof}[Proof of Theorem~\ref{thm.PF.chitil}.]
The case $E = \varnothing$ is straightforward and left to the reader;
so we WLOG assume that $E \neq \varnothing$.
Lemma~\ref{lem.duals} yields
$\Pf{G} = \left(  \Pm{G} \right)  ^{\vee}$. Hence,
$\widetilde{\chi}\left(  \Pf{G} \right)  = \widetilde{\chi}\left(
\left(  \Pm{G} \right)  ^{\vee}\right)  = \left(  -1 \right)  ^{\# E
- 1} \widetilde{\chi}\left(  \Pm{G} \right)  $ (by
Lemma~\ref{lem.chitil.dual}, applied to $W = E$ and $\Delta= \Pm{G}$).
Substituting the expression for $\widetilde{\chi}\left( \Pm{G}
\right)  $ given in Theorem~\ref{thm.PM.chitil} into this equation, we
find an expression for $\widetilde{\chi}\left(  \Pf{G} \right)  $.
This proves Theorem~\ref{thm.PF.chitil}.
\end{proof}

The two corollaries about the parities of the sizes of our two complexes are now easily obtained:

\begin{statement} \textbf{Corollary~\ref{cor.PM.parity}:}
\begin{enumerate}
    \item[(a)] If $G$ contains a useless arc or a cycle or satisfies $\tup{E = \varnothing \text{ and } s \neq t}$, then $\# \tup{\Pm{G}}$ is even.
    \item[(b)] Otherwise, $\# \tup{\Pm{G}}$ is odd.
\end{enumerate}
\end{statement}

\begin{proof}[Proof of Corollary~\ref{cor.PM.parity}.]
If $\Delta$ is any simplicial complex, then
\begin{align}
\# \Delta \equiv \chitil\tup{\Delta} \mod 2 ,
\label{pf.cor.PM.parity.1}
\end{align}
because the definition of the reduced Euler characteristic (specifically, the right hand side of \eqref{eq.chitil.def}) shows that
\[
\chitil\tup{\Delta}
= \sum_{I \in \Delta} \underbrace{\tup{-1}^{\# I - 1}}_{\equiv 1 \mod 2}
\equiv \sum_{I \in \Delta} 1 = \# \Delta \mod 2 .
\]
Applying this to $\Delta = \Pm{G}$, we obtain
$\# \tup{\Pm{G}} \equiv \chitil\tup{\Pm{G}} \mod 2$.
Hence, Corollary~\ref{cor.PM.parity} follows from
Theorem~\ref{thm.PM.chitil}.
\end{proof}

\begin{statement} \textbf{Corollary~\ref{cor.PF.parity}:}
\begin{enumerate}
    \item[(a)] If $G$ contains a useless arc or a cycle or satisfies $\tup{E = \varnothing \text{ and } s = t}$, then $\# \tup{\Pf{G}}$ is even.
    \item[(b)] Otherwise, $\# \tup{\Pf{G}}$ is odd.
\end{enumerate}
\end{statement}

\begin{proof}[Proof of Corollary~\ref{cor.PF.parity}.]
This is analogous to the proof of Corollary~\ref{cor.PM.parity}
(but relies on Theorem~\ref{thm.PF.chitil} instead of
Theorem~\ref{thm.PM.chitil}).
\end{proof}

\section{\label{sect.dmt}Discrete Morse theory}

\subsection{Definitions and topological meaning}

In this section, we shall recall the basics of Forman's \emph{discrete Morse
theory} (foreshadowed by Brown's \cite{Brown92}). We refer to
\cite{Forman-user} and \cite{Kozlov20} for deeper-going expositions of this
subject. Here we shall only recall the basics that we need. We will follow the
modern terminology of \textquotedblleft acyclic matchings\textquotedblright,
as in Kozlov's \cite{Kozlov20}.

First, we introduce a basic set-theoretic notation.

\begin{definition}
\label{def.succ}
Let $A$ and $B$ be two sets. Then, we write $A\prec B$ if
there exists some $b\in B\setminus A$ such that $B=A\cup\left\{  b\right\}  $.
Equivalently, we write $B\succ A$ in this case.
\end{definition}

Clearly, if $A$ and $B$ are two finite sets, then we have the equivalences
\begin{align*}
\left(  A\prec B\right)  \  &  \Longleftrightarrow\ \left(  B\succ A\right)
\ \Longleftrightarrow\ \left(  A\subseteq B\text{ and }\#\left(  B\setminus
A\right)  =1\right) \\
&  \Longleftrightarrow\ \left(  A\subseteq B\text{ and }\#B=\#A+1\right)  .
\end{align*}
For instance, $\left\{  2,5\right\}  \prec\left\{  2,3,5\right\}  $ but not
$\left\{  2,5\right\}  \prec\left\{  2,3,4,5\right\}  $. The binary relations
$\prec$ and $\succ$ are called \textquotedblleft is covered
by\textquotedblright\ and \textquotedblleft covers\textquotedblright.

Next, we define the notion of a matching (following \cite[Definition
10.6]{Kozlov20}\footnote{In \cite[Definition 10.6]{Kozlov20}, Kozlov works in
a more general setting, replacing a simplicial complex $\Delta$ by an
arbitrary poset. We do not need this generality here; the only posets we will
be using are simplicial complexes $\Delta$, ordered by inclusion (so that the
relation $\prec$ introduced in Definition \ref{def.succ} is precisely the
covering relation of this poset).}).

\begin{definition}
\label{def.parmat}
Let $\Delta$ be a simplicial complex with ground set $W$. A
\emph{partial matching} (or \emph{matching} for short) on $\Delta$ shall
mean a pair $\left(  M,\mu\right)  $, where $M$ is a subset of $\Delta$ (that
is, a set of faces of $\Delta$), and where $\mu:M\rightarrow M$ is an
involution (that is, a map satisfying $\mu\circ\mu=\operatorname*{id}$) with
the property that each $A\in M$ satisfies%
\[
\text{either }\mu\left(  A\right)  \prec A\text{ or }\mu\left(  A\right)
\succ A.
\]

Note that $M$ is uniquely determined by $\mu$ (namely, as the domain of $\mu
$), so that we will refer to $\mu$ alone as a matching.

Given a matching $\left(  M,\mu\right)  $, we shall refer to the faces $A\in M
$ as the \emph{matched} faces of this matching, while the faces $A\in
\Delta\setminus M$ will be called the \emph{unmatched} faces of this matching.
\end{definition}

\begin{example}
\label{exa.parmat.0}
Let $W=\left\{  1,2,3\right\}  $. Let $\Delta$ be the
simplicial complex with ground set $W$ that contains all $8$ subsets of $W$ as
faces. Consider the matching $\left(  M,\mu\right)  $ given by $M=\Delta
\setminus\left\{  \varnothing,W\right\}  $ and%
\begin{align*}
\mu\left(  \left\{  1\right\}  \right)   &  =\left\{  1,2\right\}  ,\qquad
\mu\left(  \left\{  2\right\}  \right)  =\left\{  2,3\right\}  ,\qquad
\mu\left(  \left\{  3\right\}  \right)  =\left\{  3,1\right\}  ,\\
\mu\left(  \left\{  1,2\right\}  \right)   &  =\left\{  1\right\}  ,\qquad
\mu\left(  \left\{  2,3\right\}  \right)  =\left\{  2\right\}  ,\qquad
\mu\left(  \left\{  3,1\right\}  \right)  =\left\{  3\right\}  .
\end{align*}
The unmatched faces of this matching are $\varnothing$ and $W$.
\end{example}

\begin{example}
\label{exa.parmat.1}Let $n$ and $k$ be two nonnegative integers. Let $W$ be an
$n$-element set. Let $\Delta$ be the simplicial complex consisting of all
subsets $A$ of $W$ having size $\#A\leq k$. (This is called the $\left(
k-1\right)  $\emph{-skeleton} of the simplex on $W$.) Pick any element $w\in
W$. Let $M\subseteq\Delta$ be the set of all subsets $A$ of $W$ that satisfy
$\#\left(  A\setminus\left\{  w\right\}  \right)  <k$ (or, equivalently, that
satisfy $\#A<k$ or $\left(  \#A=k\text{ and }w\in A\right)  $). We can then
define a map $\mu:M\rightarrow M$ by setting%
\[
\mu\left(  A\right)  =%
\begin{cases}
A\cup\left\{  w\right\}  , & \text{if }w\notin A;\\
A\setminus\left\{  w\right\}  , & \text{if }w\in A
\end{cases}
\qquad\text{for each }A\in M.
\]
(That is, the map $\mu$ inserts the element $w$ into any face that does not
contain $w$, and removes it from any face that does.) It is easy to see that
this map $\mu$ is well-defined\footnote{Indeed, each $A\in M$ satisfies
$\left(  A\cup\left\{  w\right\}  \right)  \setminus\left\{  w\right\}
=A\setminus\left\{  w\right\}  $, so that $\#\left(  \left(  A\cup\left\{
w\right\}  \right)  \setminus\left\{  w\right\}  \right)  =\#\left(
A\setminus\left\{  w\right\}  \right)  <k$ and therefore $A\cup\left\{
w\right\}  \in M$, and similarly $A\setminus\left\{  w\right\}  \in M$.} and
is a matching on $\Delta$ (or, more precisely, the pair $\left(  M,\mu\right)
$ is).

The unmatched faces of this matching are precisely the $k$-element subsets of
$W$ that do not contain $w$. Their number is $\dbinom{n-1}{k}$.
\end{example}

Discrete Morse theory is interested in matchings with a special property,
defined in terms of cycles (\cite[Definition 10.7]{Kozlov20}),
that we describe now.

\begin{definition}
Let $\Delta$ be a simplicial complex with ground set $W$. Let $\left(
M,\mu\right)  $ be a matching on $\Delta$.

\begin{enumerate}
\item[(a)] A \emph{cycle} of $\mu$ means an $n$-tuple $\left(  F_{1}%
,F_{2},\ldots,F_{n}\right)  $ of distinct faces in $M$ such that $n\geq2$ and%
\[
F_{1}\succ\mu\left(  F_{1}\right)  \prec F_{2}\succ\mu\left(  F_{2}\right)
\prec F_{3}\succ\cdots\prec F_{n}\succ\mu\left(  F_{n}\right)  \prec F_{1}%
\]
(that is, such that each $i\in\left\{  1,2,\ldots,n\right\}  $ satisfies
$F_{i}\succ\mu\left(  F_{i}\right)  \prec F_{i+1}$, where $F_{n+1}:=F_{1}$).

\item[(b)] The matching $\mu$ is said to be \emph{acyclic} if it has no cycle.
\end{enumerate}
\end{definition}

\begin{example}
\ \ 

\begin{enumerate}
\item[(a)] The matching $\mu$ constructed in Example \ref{exa.parmat.0} is not
acyclic. Indeed, the $3$-tuple $\left(  \left\{  1,2\right\}  ,\ \left\{
3,1\right\}  ,\ \left\{  2,3\right\}  \right)  $ is a cycle of $\mu$, since%
\[
\left\{  1,2\right\}  \succ\underbrace{\left\{  1\right\}  }_{=\mu\left(
\left\{  1,2\right\}  \right)  }\prec\left\{  3,1\right\}  \succ
\underbrace{\left\{  3\right\}  }_{=\mu\left(  \left\{  3,1\right\}  \right)
}\prec\left\{  2,3\right\}  \succ\underbrace{\left\{  2\right\}  }%
_{=\mu\left(  \left\{  2,3\right\}  \right)  }\prec\left\{  1,2\right\}  .
\]

\item[(b)] The matching $\mu$ in Example \ref{exa.parmat.1} is acyclic.
Indeed, the faces $F\in M$ satisfying $F\succ\mu\left(  F\right)  $ are
precisely the faces $F\in M$ that contain $w$; therefore, a cycle $\left(
F_{1},F_{2},\ldots,F_{n}\right)  $ of $\mu$ would have to satisfy $w\in F_{1}$
and $w\in F_{2}$, which would easily yield $F_{1}=\mu\left(  F_{1}\right)
\cup\left\{  w\right\}  =F_{2}$, contradicting the distinctness of
$F_{1},F_{2},\ldots,F_{n}$.
\end{enumerate}
\end{example}

Acyclic matchings are the main objects of discrete Morse theory, although
different texts give different definitions whose equivalence is not always
immediate\footnote{The closest notion in Forman's original work is that of a
gradient vector field in \cite[\S 3]{Forman-user}. Indeed, our partial
matchings $\mu$ correspond to Forman's \textquotedblleft discrete vector
fields\textquotedblright\ as defined in \cite[Definition 3.3]{Forman-user}
(specifically, if $\left(  M,\mu\right)  $ is a partial matching, then the set
$\left\{  \left(  A,\mu\left(  A\right)  \right)  \ \mid\ A\in M\text{ and
}A\prec\mu\left(  A\right)  \right\}  $ is a discrete vector field); our
cycles are more or less Forman's \textquotedblleft closed $V$%
-paths\textquotedblright\ (at least those that cannot be broken up into
shorter ones); thus, our acyclic partial matchings correspond to Forman's
\textquotedblleft gradient vector fields of discrete Morse
functions\textquotedblright\ (according to \cite[Theorem 3.5]{Forman-user},
which is proved in \cite[Proposition 14.11]{Kozlov20}). The notion of a
discrete Morse function was regarded as fundamental when Forman originally
conceived discrete Morse theory in 1995, but is nowadays considered as a
refinement whose use is entirely optional.
\par
We note that unmatched faces of a partial matching are called
\textquotedblleft critical simplices\textquotedblright\ in \cite{Forman-user}%
.
\par
There is a subtle but confusing notational disagreement in
the literature. Our conventions follow \cite{Kozlov20}. Meanwhile, more
topologically inclined sources such as \cite{Forman-user} and
\cite{Bjo} have their complexes consist of \textbf{nonempty} subsets of
their ground sets $W$. Accordingly, their matchings slightly differ from ours:
Where our matchings $\mu$ will often match the empty face $\varnothing$
with a singleton face $\set{v}$, their matchings would instead leave
$\set{v}$ unmatched.
}. They give highly useful information on the homotopy type of a complex, as
the following theorem (\cite[Theorem 2.5]{Forman-user}, \cite[Theorem
11.2]{Kozlov20}, actually a particular case of \cite[Proposition 1]{Brown92})
shows.\footnote{Note the requirement that the empty face $\varnothing
\in \Delta$ is unmatched in $\mu$. This is to make our setting compatible
with the notion of CW complexes, which have no $\left(-1\right)$-cells.
Of course, if $\varnothing$ is matched in $\mu$, then we can always make
$\varnothing$ unmatched by removing $\varnothing$ and its partner
(which is a $0$-dimensional face) from the domain (and target) of $\mu$;
so this requirement does not significantly restrict the applicability of
the theorem.}

\begin{theorem}
\label{thm.dmt.cw}
Let $\mu$ be an acyclic matching on a simplicial complex $\Delta$.
Assume that the empty face $\varnothing \in \Delta$ is unmatched in $\mu$.
Then, $\Delta$ is homotopy-equivalent to a CW complex $X$ with the property
that for each $d \geq 0$, the number of $d$-cells in $X$ equals the number
of unmatched $d$-dimensional faces of $\Delta$.
\end{theorem}

Thus, acyclic matchings can be used as a combinatorial proxy for (certain
kinds of) homotopy equivalences, particularly when they have few unmatched
faces. It is via these proxies that we will prove Theorems
\ref{thm.PM.homotopy} and \ref{thm.PF.homotopy}.

\subsection{The unmatched f-polynomial}

First, we shall show some basic properties of acyclic matchings. We will use
the following polynomial fingerprint of matchings.

\begin{definition}
Let $\left(  M,\mu\right)  $ be a matching on a simplicial complex $\Delta$.
Then, we define the \emph{unmatched f-polynomial} of $\mu$ to be the
polynomial
\[
u_{\mu}\left(  x\right)
:= \sum_{I\in\Delta\setminus M}x^{\#I}
\in \mathbb{Z}\left[  x\right]  .
\]
Note that the sum here ranges over all unmatched faces of $\mu$.
\end{definition}

For instance, the matching $\mu$ in Example \ref{exa.parmat.0} has unmatched
f-polynomial $u_{\mu}\left(  x\right)  =x^{0}+x^{3}$, whereas the one in
Example \ref{exa.parmat.1} has unmatched f-polynomial $u_{\mu}\left(
x\right)  =\dbinom{n-1}{k}x^{k}$. A trivial example is the empty matching
$\left(  \varnothing,\varnothing\right)  $, which exists for every simplicial
complex $\Delta$, and whose unmatched f-polynomial $u_{\varnothing}\left(
x\right)  $ is just the usual f-polynomial $f_{\Delta}\left(  x\right)  $
(since $\Delta\setminus\varnothing=\Delta$).

A matching $\mu$ whose unmatched f-polynomial $u_{\mu}\left(  x\right)  $ is a
single monomial $x^{m}$ is one that has only one unmatched face (which has
size $m$, that is, dimension $m-1$). For such matchings, Theorem
\ref{thm.dmt.cw} has the following consequence.\footnote{To recover
Corollary~\ref{cor.dmt.hteq} from Theorem~\ref{thm.dmt.cw}, make sure to
first delete the empty face $\varnothing$ and its partner from the matching
$\mu$, so that $\varnothing$ becomes unmatched and Theorem~\ref{thm.dmt.cw}
applies.}

\begin{corollary}
\label{cor.dmt.hteq}Let $\mu$ be an acyclic matching on a simplicial complex
$\Delta$.

\begin{enumerate}
\item[(a)] If $u_{\mu}\left(  x\right)  =0$, then $\Delta$ is contractible.

\item[(b)] If $u_{\mu}\left(  x\right)  =x^{m}$ for some $m\in\mathbb{N}$,
then $\Delta$ is homotopy-equivalent to a sphere of dimension $m-1$.
\end{enumerate}
\end{corollary}

We note that the condition $u_{\mu}\left(  x\right)  =0$ in Corollary
\ref{cor.dmt.hteq} (a) means that all faces of $\Delta$ are matched; a
simplicial complex $\Delta$ with such a matching $\mu$ is said to be
\emph{collapsible} (see \cite[Theorem 6.4]{Forman-user}).

\subsection{Two reduction lemmas}

To construct acyclic matchings with simple unmatched f-polynomials, we shall
use two basic lemmas. The first one guarantees the collapsibility of any cone
(\cite[Proposition 10.11]{Kozlov20}).

\begin{lemma}
\label{lem.dmt.cone}
Let $\Delta$ be a simplicial complex that is a cone. Then,
$\Delta$ has an acyclic matching $\left(  M,\mu\right)  $ satisfying $u_{\mu
}\left(  x\right)  =0$.
\end{lemma}

To keep this paper self-contained, we shall give a proof of this lemma
in Appendix~\ref{sect.apx-morse}.


The second lemma allows the recursive construction of acyclic matchings, based
on deletions and links. In its essence, it appears to go back to Forman, and
similar facts are found across the literature (e.g., \cite[Proposition
10.13]{Kozlov20}, \cite[Lemma 2.4]{Engstrom08}, \cite[Theorem 11.10]%
{Kozlov-CAT}), but we have not been able to locate the following
version.\footnote{Actually, Lemma \ref{lem.dmt.rec} can be derived from
\cite[Theorem 11.10]{Kozlov-CAT} (= \cite[Theorem 16.8]{Kozlov20})
by taking $P=\Delta$ and $Q=\left\{
0<1\right\}  $ and letting $\varphi:P\rightarrow Q$ be the map that sends each
face $F\in\Delta$ to $1$ if $w\in F$ and to $0$ if $w\notin F$. The acyclic
matching $\left(  M_-,\mu_-\right)  $ on $\dl_{\Delta}\left(  w\right)  $
then becomes an acyclic matching of
$\varphi^{-1}\left(  0\right)  $, whereas the acyclic matching $\left(
M_+,\mu_+\right)  $ on $\lk_{\Delta}\left(
w\right)  $ can be converted into an acyclic matching of $\varphi^{-1}\left(
1\right)  $ (by inserting $w$ into each face).}

\begin{lemma}
\label{lem.dmt.rec}
Let $\Delta$ be a simplicial complex with ground set $W$.
Let $w\in W$.

Let $\left(  M_-,\mu_-\right)  $ be an acyclic matching on
$\dl_{\Delta}\left(  w\right)  $. Let $\left(
M_+,\mu_+\right)  $ be an acyclic matching on
$\lk_{\Delta}\left(  w\right)  $. Then, $\Delta$ has an acyclic matching
$\left(  M,\mu\right)  $ satisfying%
\[
u_{\mu}\left(  x\right)  =u_{\mu_-}\left(  x\right)  +xu_{\mu_+}\left(
x\right)  .
\]

\end{lemma}

This lemma, too, will be proved in Appendix~\ref{sect.apx-morse}.


\section{\label{sect.homtype-proof}Proof of the homotopy types}

Our final goal is to establish the homotopy types of $\Pf{G}$ and
$\Pm{G}$ (Theorems \ref{thm.PM.homotopy} and \ref{thm.PF.homotopy}).
In view of Corollary \ref{cor.dmt.hteq}, it will suffice to prove the
following two Morse-theoretic results.

\begin{theorem}
\label{thm.PM.morse} Assume that $E\neq\varnothing$. Then:

\begin{enumerate}
\item[(a)] If $G$ contains a useless arc or a cycle, then the complex
$\Pm{G}$ has an acyclic matching $\left(  M,\mu\right)  $ satisfying
$u_{\mu}\left(  x\right)  =0$.

\item[(b)] Otherwise, $\Pm{G}$ has an acyclic matching $\left(
M,\mu\right)  $ satisfying $u_{\mu}\left(  x\right)  =x^{\#E-\#\NS}$.
\end{enumerate}
\end{theorem}

\begin{theorem}
\label{thm.PF.morse} Assume that $E\neq\varnothing$. Then:

\begin{enumerate}
\item[(a)] If $G$ contains a useless arc or a cycle, then the complex
$\Pf{G}$ has an acyclic matching $\left(  M,\mu\right)  $ satisfying
$u_{\mu}\left(  x\right)  =0$.

\item[(b)] Otherwise, $\Pf{G}$ has an acyclic matching $\left(
M,\mu\right)  $ satisfying $u_{\mu}\left(  x\right)  =x^{\#\NS -1}$.
\end{enumerate}
\end{theorem}

We will derive both of these from the following lemmas (stated in terms of the
c-polynomial $c_{G}\left(  x,y\right)  $ from Definition \ref{def.cpol.cpol}).

\begin{lemma}
\label{lem.PM.morse-c}Assume that $E\neq\varnothing$. Then, the complex
$\Pm{G}$ has an acyclic matching $\left(
M,\mu\right)  $ satisfying $u_{\mu}\left(  x\right)  =c_{G}\left(  1,x\right)
$.
\end{lemma}

\begin{lemma}
\label{lem.PF.morse-c}Assume that $E\neq\varnothing$. Then, the complex
$\Pf{G}$ has an acyclic matching $\left(
M,\mu\right)  $ satisfying $u_{\mu}\left(  x\right)  =c_{G}\left(  x,1\right)
$.
\end{lemma}

\begin{proof}
[Proof of Lemma \ref{lem.PM.morse-c}.]Our proof is structurally
similar to the proof of Lemma \ref{lem.PM.chitil-c}.
We proceed in two steps:

\textit{Step 1:} We claim that Lemma \ref{lem.PM.morse-c} is true when $G$ has
a useless arc.

Indeed, assume that $G$ has a useless arc. Then, Definition
\ref{def.cpol.cpol} yields $c_{G}\left(  x,y\right)  =0$. By specializing $x$
and $y$ to $1$ and $x$ here, we obtain $c_{G}\left(  1,x\right)  =0$.
Meanwhile,
Lemma~\ref{lem:dumbarc} shows that $\Pm{G}$ is a cone (since $G$ has
a useless arc). Hence, Lemma \ref{lem.dmt.cone} (applied to $W=E$ and
$\Delta=\Pm{G}$) shows that $\Pm{G}$ has an acyclic matching
$\left(  M,\mu\right)  $ satisfying
$u_{\mu}\left(  x\right)  =0$. In other words, $\Pm{G}
$ has an acyclic matching $\left(  M,\mu\right)  $ satisfying $u_{\mu}\left(
x\right)  =c_{G}\left(  1,x\right)  $ (since $c_{G}\left(  1,x\right)  =0$).
Hence, we have proved Lemma \ref{lem.PM.morse-c} in the case when $G$ has a
useless arc.

\textit{Step 2:} Let us now prove Lemma \ref{lem.PM.morse-c} in general.

We proceed by induction on the positive integer $\#E$ (this is a positive
integer, since $E\neq\varnothing$).

\textit{Base case:} We must show that Lemma \ref{lem.PM.morse-c} holds when
$\#E=1$.

Indeed, assume that $\#E=1$. Thus, $E=\left\{  e\right\}  $ for some arc $e$
of $G$. Consider this arc $e$. If $G$ has a useless arc, then we already
know (from Step 1) that Lemma \ref{lem.PM.morse-c} is true. Thus, we WLOG
assume that $G$ has no useless arcs. As in the proof of
Lemma~\ref{lem.PM.chitil-c}, we can now see that
$\Pm{G}=\left\{  \varnothing\right\}  $ and $c_{G}\left(  x,y\right)  =1$.
Substituting $1$ and $x$ for $x$ and $y$ in the latter equality, we obtain
$c_{G}\left(  1,x\right)  =1$. However, $\Pm{G}
=\left\{  \varnothing\right\}  $ shows that the complex $\Pm{G}$ has an acyclic matching $\left(  M,\mu\right)  $ satisfying
$u_{\mu}\left(  x\right)  =1$ (namely, the empty matching $\left(
\varnothing,\varnothing\right)  $). In other words, $\Pm{G}$ has an acyclic matching $\left(  M,\mu\right)  $ satisfying
$u_{\mu}\left(  x\right)  =c_{G}\left(  1,x\right)  $ (since $c_{G}\left(
1,x\right)  =1$). This shows that Lemma \ref{lem.PM.morse-c} holds for our
$G$. Thus, Lemma \ref{lem.PM.morse-c} is proved when $\#E=1$. This completes
the base case.

\textit{Induction step:} We fix a directed graph $G=\left(  V,E,s,t\right)  $
with $\#E>1$, and we assume (as induction hypothesis) that Lemma
\ref{lem.PM.morse-c} is true for all graphs with exactly $\#E-1$ arcs.
We must now prove Lemma \ref{lem.PM.morse-c} for our graph $G$.

If $G$ has a useless arc, then we already know (from Step 1) that Lemma
\ref{lem.PM.morse-c} is true. Thus, we WLOG assume that $G$ has no useless
arcs. As in our above proof of Lemma \ref{lem.PM.chitil-c}, we can thus find
an arc $e\in E$ with source $s$. Consider this arc $e$. Lemma
\ref{lem.cpol.rec} yields%
\[
c_{G}\left(  x,y\right)  =xc_{G/e}\left(  x,y\right)  +yc_{G\backslash
e}\left(  x,y\right)  .
\]
Substituting $1$ and $x$ for $x$ and $y$ in this equality, we find%
\begin{align}
c_{G}\left(  1,x\right)
& =c_{G/e}\left(  1,x\right)  +xc_{G\backslash e}\left(  1,x\right)
.\label{pf.lem.PM.morse-c.c-rec}%
\end{align}

However, $\#E>1$ entails $E\not \subseteq \left\{  e\right\}  $, thus
$E\setminus\left\{  e\right\}  \neq\varnothing$. The graph $G/e$ has arc set
$E\setminus\left\{  e\right\}  $, whose size is $\#\left(  E\setminus\left\{
e\right\}  \right)  =\#E-1$. Thus, by our induction hypothesis, Lemma
\ref{lem.PM.morse-c} is true for $G/e$ instead of $G$. In other words, the
complex $\Pm{G/e}  $ has an acyclic matching $\left(
M_{/},\mu_{/}\right)  $ satisfying $u_{\mu_{/}}\left(  x\right)
=c_{G/e}\left(  1,x\right)  $. The same argument (applied to $G\backslash e$
instead of $G/e$) shows that the complex $\Pm{G\backslash e}$ has an
acyclic matching $\left(  M_{\backslash},\mu_{\backslash
}\right)  $ satisfying $u_{\mu_{\backslash}}\left(  x\right)  =c_{G\backslash
e}\left(  1,x\right)  $. Consider these two matchings $\left(  M_{/},\mu
_{/}\right)  $ and $\left(  M_{\backslash},\mu_{\backslash}\right)  $. Thus,
$\left(  M_{/},\mu_{/}\right)  $ is an acyclic matching of the complex
$\Pm{G/e}  =\dl_{\Pm{G}}\left(  e\right)  $ (by
Lemma~\ref{lem:delcon} (b)), whereas $\left(  M_{\backslash},\mu_{\backslash
}\right)  $ is an acyclic matching of the complex $\Pm{G\backslash e}
=\lk_{\Pm{G}}\left(  e\right)  $ (by
Lemma~\ref{lem:delcon} (a)). Hence, Lemma \ref{lem.dmt.rec} (applied to $W=E$
and $\Delta=\Pm{G}$ and $w=e$ and $\left(  M_-%
,\mu_-\right)  =\left(  M_{/},\mu_{/}\right)  $ and $\left(  M_+,\mu
_+\right)  =\left(  M_{\backslash},\mu_{\backslash}\right)  $) shows that
$\Pm{G}$ has an acyclic matching $\left(
M,\mu\right)  $ satisfying%
\[
u_{\mu}\left(  x\right)  =\underbrace{u_{\mu_{/}}\left(  x\right)  }%
_{=c_{G/e}\left(  1,x\right)  }+\,x\underbrace{u_{\mu_{\backslash}}\left(
x\right)  }_{=c_{G\backslash e}\left(  1,x\right)  }=c_{G/e}\left(
1,x\right)  +xc_{G\backslash e}\left(  1,x\right)  =c_{G}\left(  1,x\right)
\]
(by \eqref{pf.lem.PM.morse-c.c-rec}). This shows that Lemma
\ref{lem.PM.morse-c} holds for our $G$. This completes the induction step.
Thus, Lemma \ref{lem.PM.morse-c} is proved by induction.
\end{proof}

\begin{proof}
[Proof of Lemma \ref{lem.PF.morse-c}.]This is analogous to our above proof of
Lemma \ref{lem.PM.morse-c}. The main difference is in the induction step: The
matching $\left(  M_{/},\mu_{/}\right)  $ is now an acyclic matching of the
complex
$\Pf{G/e}  =\lk_{\Pf{G}}\left(  e\right)  $
(by Lemma~\ref{lem:delcon_pf}
(b)), whereas the matching $\left(  M_{\backslash},\mu_{\backslash}\right)  $
is now an acyclic matching of the complex $\Pf{G\backslash e}
=\dl_{\Pf{G}}\left(  e\right)  $
(by Lemma~\ref{lem:delcon_pf} (a)). Hence, Lemma \ref{lem.dmt.rec} (applied to
$W=E$ and $\Delta=\Pf{G}$ and $w=e$ and $\left(
M_-,\mu_-\right)  =\left(  M_{\backslash},\mu_{\backslash}\right)  $ and
$\left(  M_+,\mu_+\right)  =\left(  M_{/},\mu_{/}\right)  $) shows that
$\Pf{G}$ has an acyclic matching $\left(
M,\mu\right)  $ satisfying%
\begin{align*}
u_{\mu}\left(  x\right)
=\underbrace{u_{\mu_{\backslash}}\left(
x\right)  }_{=c_{G\backslash e}\left(  x,1\right)  }+\,x\underbrace{u_{\mu
_{/}}\left(  x\right)  }_{=c_{G/e}\left(  x,1\right)  }
&= c_{G\backslash e}\left(  x,1\right)  +xc_{G/e}\left(  x,1\right)  \\
& =xc_{G/e}\left(  x,1\right)  +c_{G\backslash e}\left(  x,1\right)
=c_{G}\left(  x,1\right)
\end{align*}
(again by a specialization of Lemma \ref{lem.cpol.rec}).
\end{proof}

\begin{proof}
[Proof of Theorem \ref{thm.PM.morse}.] (a) Assume that $G$ has a useless arc
or a cycle. Thus, Definition \ref{def.cpol.cpol} yields $c_{G}\left(
x,y\right)  =0$. Hence, $c_{G}\left(  1,x\right)  =0$.

Also, Lemma \ref{lem.PM.morse-c} shows that the complex
$\Pm{G}$ has an acyclic matching $\left(
M,\mu\right)  $ satisfying $u_{\mu}\left(  x\right)  =c_{G}\left(  1,x\right)
$. In other words, $\Pm{G}$ has an acyclic matching
$\left(  M,\mu\right)  $ satisfying $u_{\mu}\left(  x\right)  =0$ (since
$c_{G}\left(  1,x\right)  =0$). This proves Theorem \ref{thm.PM.morse} (a).

(b) Assume that $G$ has no useless arcs and no cycles.



Hence, Definition \ref{def.cpol.cpol} yields
$c_{G}\left(  x,y\right)  =x^{\#\NS -1}y^{\#E-\#\NS}$ (since
$E \neq \varnothing$). Substituting $1$ and $x$ for $x$ and $y$
in this equality, we find $c_{G}\left(  1,x\right)  =1^{\#\NS%
-1}x^{\#E-\#\NS}=x^{\#E-\#\NS}$.

But Lemma \ref{lem.PM.morse-c} shows that the complex $\Pm{G}$ has an acyclic matching $\left(  M,\mu\right)  $ satisfying
$u_{\mu}\left(  x\right)  =c_{G}\left(  1,x\right)  $. In other words, the
complex $\Pm{G}$ has an acyclic matching $\left(
M,\mu\right)  $ satisfying $u_{\mu}\left(  x\right)  =x^{\#E-\#\NS}$
(since $c_{G}\left(  1,x\right)  =x^{\#E-\#\NS}$). This proves Theorem
\ref{thm.PM.morse} (b).
\end{proof}

\begin{proof}
[Proof of Theorem \ref{thm.PF.morse}.] This follows from Lemma
\ref{lem.PF.morse-c} in the same way as Theorem \ref{thm.PM.morse} was derived
from Lemma \ref{lem.PM.morse-c}.
\end{proof}

As explained above, Theorem \ref{thm.PM.homotopy} and Theorem
\ref{thm.PF.homotopy} follow (respectively) from Theorem \ref{thm.PM.morse}
and Theorem \ref{thm.PF.morse} using Corollary \ref{cor.dmt.hteq}.
(In the proof of Theorem \ref{thm.PM.homotopy}, we need to handle
the $E = \varnothing$ cases separately, but this is easy%
\footnote{Namely, let us assume that $E = \varnothing$.
Then, $\Pm{G} = \varnothing$ if $s \neq t$, and
$\Pm{G} = \set{\varnothing}$ if $s = t$.}.)


\section{\label{sect.further}Further directions}

\subsection{Combinatorial grapes}

Marietti and Testa, in \cite[Definition 3.2]{MariettiTesta}, introduced a certain well-behaved class of simplicial complexes: the \emph{combinatorial grapes}.
We recall their definition (rewritten using our notations) next.

\begin{definition}
The \emph{combinatorial grapes} are a class of simplicial complexes defined recursively:
\begin{itemize}
\item Any simplicial complex $\tup{W, \Delta}$ with $\# W \leq 1$ is a combinatorial grape.
\item Let $\tup{W, \Delta}$ be a simplicial complex. If there exists an $a \in W$ such that both $\lk_\Delta (a)$ and $\dl_\Delta (a)$ are combinatorial grapes, and such that there is a cone $\tup{W \setminus \set{a}, \Gamma}$ satisfying $\lk_\Delta (a) \subseteq \Gamma \subseteq \dl_\Delta (a)$, then $\tup{W, \Delta}$ is a combinatorial grape.\footnote{Here, the statement $\lk_\Delta (a) \subseteq \Gamma \subseteq \dl_\Delta (a)$ means that each face of $\lk_\Delta (a)$ is a face of $\Gamma$, and that each face of $\Gamma$ is a face of $\dl_\Delta (a)$.}
\end{itemize}
\end{definition}

It is shown in \cite[Proposition 3.3]{MariettiTesta} that combinatorial grapes have a rather simple homotopy type (viz., they are disjoint unions of points or wedges of spheres).

An even more restrictive notion is that of a \emph{strong grape}, which we define as follows.

\begin{definition}
The \emph{strong grapes} are a class of simplicial complexes defined recursively:
\begin{itemize}
\item Any simplicial complex $\tup{W, \Delta}$ with $\# W \leq 1$ is a strong grape.
\item Let $\tup{W, \Delta}$ be a simplicial complex. If there exists an $a \in W$ such that both $\lk_\Delta (a)$ and $\dl_\Delta (a)$ are strong grapes, and such that at least one of $\lk_\Delta (a)$ and $\dl_\Delta (a)$ is a cone, then $\tup{W, \Delta}$ is a strong grape.
\end{itemize}
\end{definition}

Clearly, every strong grape is a combinatorial grape.
We shall now show that both complexes $\Pf{G}$ and $\Pm{G}$ for a graph $G$ are strong grapes.

\begin{proposition}
\label{prop.strong-grapes.PfPm}
Both $\Pf{G}$ and $\Pm{G}$ are strong grapes.
\end{proposition}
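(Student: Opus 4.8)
The plan is to induct on $\# E$, in the same spirit as the proofs in Sections~\ref{sect.fpol-proof1} and~\ref{sect.euler-proof}, after first recording an auxiliary fact: \textbf{every cone is a strong grape}. I would prove this by induction on the size of the ground set. A complex on a ground set of size $\le 1$ is a strong grape by definition, so let $\tup{W, \Delta}$ be a cone with apex $w$ and $\# W \ge 2$; pick any $a \in W \setminus \set{w}$. A direct check from the definitions shows that both $\lk_\Delta(a)$ and $\dl_\Delta(a)$ are again cones with apex $w$, now on the smaller ground set $W \setminus \set{a}$; by the inductive hypothesis they are strong grapes, and in particular at least one of them is a cone, so $\tup{W, \Delta}$ is a strong grape.

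For the main induction, the base case $\# E \le 1$ holds because any complex on a ground set of size $\le 1$ is a strong grape. So assume $\# E \ge 2$ and that the proposition holds for all graphs with fewer edges --- in particular for $G \backslash e$ and $G / e$ for every edge $e$. If $G$ has a useless edge, then Lemma~\ref{lem:dumbedge} tells us that $\Pf{G}$ and $\Pm{G}$ are both cones, hence strong grapes by the auxiliary fact, and we are done. So assume $G$ has no useless edge. Then $G$ has an edge whose source is $s$: pick any edge $f$ (possible since $\# E \ge 2$); as $f$ is not useless, it lies on some $s-t$-path, which is therefore nontrivial, and the first edge $e$ of that path has source $s$.

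Fix such an edge $e$, and let $s'$ be its target. Since $e$ has source $s$, Lemmas~\ref{lem:delcon} and~\ref{lem:delcon_pf} yield
\begin{align*}
\lk_{\Pm{G}}(e) &= \Pm{G \backslash e}, & \dl_{\Pm{G}}(e) &= \Pm{G / e}, \\
\dl_{\Pf{G}}(e) &= \Pf{G \backslash e}, & \lk_{\Pf{G}}(e) &= \Pf{G / e}.
\end{align*}
Both $G \backslash e$ and $G / e$ have $\# E - 1$ edges, so by the induction hypothesis all four complexes on the right are strong grapes. Hence, to see that $\Pm{G}$ and $\Pf{G}$ are strong grapes (each with $a = e$), it remains only to exhibit, for each of $\Delta \in \set{\Pm{G}, \Pf{G}}$, a cone among $\set{\lk_\Delta(e), \dl_\Delta(e)}$. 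For this I would use the complementary Lemmas~\ref{lem:induction_step_1} and~\ref{lem:induction_step_2b}: if $e$ is not the only edge with target $s'$, then $G / e$ has a useless edge (Lemma~\ref{lem:induction_step_1}); and if $e$ \emph{is} the only edge with target $s'$, then --- since $e$ is not useless and $\# E > 1$ --- the graph $G \backslash e$ has a useless edge (Lemma~\ref{lem:induction_step_2b}). In either case one of $G \backslash e$, $G / e$ carries a useless edge, so by Lemma~\ref{lem:dumbedge} both its path-missing and its path-free complex are cones, and the four identities above turn this into the required cone among $\set{\lk_\Delta(e), \dl_\Delta(e)}$ for $\Delta = \Pm{G}$ and for $\Delta = \Pf{G}$ at once. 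This completes the induction.

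Most of this is bookkeeping with the deletion/link identities already established; the one point requiring care is that a \emph{single} choice of $e$ must force \emph{one} of $G \backslash e$ and $G / e$ to acquire a useless edge, and I expect verifying that Lemmas~\ref{lem:induction_step_1} and~\ref{lem:induction_step_2b} genuinely exhaust all cases (according to whether $e$ is the unique edge entering its target) --- and that the resulting cone serves both $\Pm{G}$ and $\Pf{G}$ --- to be the only substantive part of the argument.
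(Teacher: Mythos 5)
Your proof is correct and follows essentially the same route as the paper's: strong induction on $\# E$, take the first edge $e$ of an $s$--$t$-path, apply the deletion/link identities (Lemmas~\ref{lem:delcon} and~\ref{lem:delcon_pf}), and use Lemmas~\ref{lem:induction_step_1} and~\ref{lem:induction_step_2b} to force a useless edge in $G/e$ or $G\backslash e$ according to whether $e$ is the unique edge into its target, whence Lemma~\ref{lem:dumbedge} produces the required cone. The only real difference is in how the degenerate cases are dispatched: the paper treats $s = t$ and ``no $s$--$t$-path'' in footnotes, verifying directly (by a quick induction) that $\tup{E, 2^E}$ and $\tup{E, \varnothing}$ are strong grapes; you instead prove the clean auxiliary fact that every cone is a strong grape, observe that in those degenerate situations (with $\# E \geq 2$) every edge is useless, and hence Lemma~\ref{lem:dumbedge} makes $\Pf{G}$ and $\Pm{G}$ cones directly. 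This is a mild streamlining --- it replaces the paper's two ad hoc footnote inductions with one slightly more general lemma of the same difficulty --- and your check that the link and deletion of a cone are again cones with the same apex is exactly the ingredient needed. Your observation that, with $\# E \geq 2$, the hypothesis ``$G$ has no useless edge'' already forces $s \neq t$ and the existence of an $s$--$t$-path (so those degenerate cases never reach the main argument) is correct and is the point that makes your reorganization work.
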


\begin{proof}
We proceed by strong induction on $\# E$.
Thus, we assume (as induction hypothesis) that both $\Pf{H}$ and $\Pm{H}$ are strong grapes whenever $H$ is a graph with fewer arcs than $G$.
We must now show that both $\Pf{G}$ and $\Pm{G}$ are strong grapes.
This is obvious if $\# E \leq 1$, so we WLOG assume that $\# E > 1$.

The case when $s = t$ is easy\footnote{%
Indeed, in this case, it is clear that any subset of $E$
contains an $s-t$-path, and thus we have
$\Pf{G} = \varnothing$ and $\Pm{G} = 2^E$.
Thus, all that needs to be proved is that both $\tup{E, 2^E}$
and $\tup{E, \varnothing}$ are strong grapes.
But this is straightforward to do by induction on $\# E$
(taking any $a \in E$ in the induction step).}.
Thus, we WLOG assume that $s \neq t$.

The case when $G$ has no $s-t$-path is easy\footnote{%
Indeed, in this case, it can easily be checked that
$\Pf{G} = 2^E$ and $\Pm{G} = \varnothing$.
Thus, all that needs to be proved is that both $\tup{E, 2^E}$
and $\tup{E, \varnothing}$ are strong grapes.
But this was done in the previous footnote.}.
Thus, we WLOG assume that $G$ has at least one $s-t$-path.
This $s-t$-path has at least one arc (since $s \neq t$).
Thus, its first arc is well-defined.
This first arc must have source $s$ and is not useless (since it
belongs to an $s-t$-path).
Hence, there exists an arc $e$ with source $s$ that is not useless.
Consider this arc $e$.
Let $s'$ be its target.

The graph $G / e$ has fewer arcs than $G$.
Hence, by the induction hypothesis, both
$\Pf{G / e}$ and $\Pm{G / e}$ are strong grapes.
Likewise, both
$\Pf{G \backslash e}$ and $\Pm{G \backslash e}$ are strong grapes.

Lemma~\ref{lem:delcon_pf} (a) yields
that $\dl_{\Pf{G}}(e) = \Pf{G \backslash e}$,
whereas
Lemma~\ref{lem:delcon_pf} (b) yields
that $\lk_{\Pf{G}}(e) = \Pf{G / e}$.
Likewise, Lemma~\ref{lem:delcon} (a) yields
that $\lk_{\Pm{G}}(e) = \Pm{G \backslash e}$,
whereas
Lemma~\ref{lem:delcon} (b) yields
that $\dl_{\Pm{G}}(e) = \Pm{G / e}$.

Recall that $\Pf{G / e}$ and $\Pf{G \backslash e}$ are
strong grapes.
In other words, $\lk_{\Pf{G}}(e)$ and $\dl_{\Pf{G}}(e)$
are strong grapes
(since $\dl_{\Pf{G}}(e) = \Pf{G \backslash e}$ and
$\lk_{\Pf{G}}(e) = \Pf{G / e}$).
Thus, if we can show that at least one of
$\lk_{\Pf{G}}(e)$ and $\dl_{\Pf{G}}(e)$ is a cone,
then we can conclude that $\Pf{G}$ is a strong grape
(by the definition of a strong grape).

Likewise, if we can show that at least one of
$\lk_{\Pm{G}}(e)$ and $\dl_{\Pm{G}}(e)$ is a cone,
then we can conclude that $\Pm{G}$ is a strong grape
(since $\lk_{\Pm{G}}(e) = \Pm{G \backslash e}$ and
$\dl_{\Pm{G}}(e) = \Pm{G / e}$ are strong grapes).

We are in one of the following two cases:

\textit{Case 1:} The arc $e$ is not the only arc with target $s'$.

\textit{Case 2:} The arc $e$ is the only arc with target $s'$.

Let us first consider Case 1.
In this case, the arc $e$ is not the only arc with target $s'$.
Hence, Lemma~\ref{lem:induction_step_1} shows that
$G / e$ has a useless arc.
Let $e'$ be this useless arc.
Thus, Lemma~\ref{lem:dumbarc} (applied to $G / e$ and $e'$ instead
of $G$ and $e$) shows that
both $\Pf{G / e}$ and $\Pm{G / e}$ are cones with apex $e'$.
In other words,
both $\lk_{\Pf{G}}(e)$ and $\dl_{\Pm{G}}(e)$ are cones
(since $\lk_{\Pf{G}}(e) = \Pf{G / e}$
and $\dl_{\Pm{G}}(e) = \Pm{G / e}$).
In particular, at least one of
$\lk_{\Pf{G}}(e)$ and $\dl_{\Pf{G}}(e)$ is a cone
(namely, $\lk_{\Pf{G}}(e)$).
As we have seen above, this entails that $\Pf{G}$ is a strong grape.
Furthermore, at least one of
$\lk_{\Pm{G}}(e)$ and $\dl_{\Pm{G}}(e)$ is a cone
(namely, $\dl_{\Pm{G}}(e)$).
As we have seen above, this entails that $\Pm{G}$ is a strong grape.

We thus have shown that
both $\Pf{G}$ and $\Pm{G}$ are strong grapes.
This completes the induction step in Case 1.

Let us now consider Case 2.
In this case, the arc $e$ is the only arc with target $s'$.
Hence, Lemma~\ref{lem:induction_step_2b} yields that
$G \backslash e$ has a useless arc.
Let $e'$ be this useless arc.
Thus, Lemma~\ref{lem:dumbarc} (applied to $G \backslash e$ and $e'$ instead
of $G$ and $e$) shows that
both $\Pf{G \backslash e}$ and $\Pm{G \backslash e}$ are cones with apex $e'$.
In other words,
both $\dl_{\Pf{G}}(e)$ and $\lk_{\Pm{G}}(e)$ are cones
(since $\lk_{\Pm{G}}(e) = \Pm{G \backslash e}$
and $\dl_{\Pf{G}}(e) = \Pf{G \backslash e}$).
In particular, at least one of
$\lk_{\Pf{G}}(e)$ and $\dl_{\Pf{G}}(e)$ is a cone
(namely, $\dl_{\Pf{G}}(e)$).
As we have seen above, this entails that $\Pf{G}$ is a strong grape.
Furthermore, at least one of
$\lk_{\Pm{G}}(e)$ and $\dl_{\Pm{G}}(e)$ is a cone
(namely, $\lk_{\Pm{G}}(e)$).
As we have seen above, this entails that $\Pm{G}$ is a strong grape.

We thus have shown that
both $\Pf{G}$ and $\Pm{G}$ are strong grapes.
This completes the induction step in Case 2.

We have now completed the induction step in each of
the two Cases 1 and 2.
Hence, the induction step is complete,
and Proposition~\ref{prop.strong-grapes.PfPm} is proved.
\end{proof}

\begin{remark}
It is easy to see (recursively, using Lemma~\ref{lem.dmt.rec})
that any strong grape 
has an acyclic matching $\tup{M, \mu}$ satisfying
$u_\mu\tup{x} = 0$ or $u_\mu\tup{x} = x^m$ for some
$m \in \NN$.
However, it does not seem easy to describe which of
these two alternatives holds, nor what $m$ is.
Thus, this does not yield alternative proofs of
Theorems~\ref{thm.PM.morse} and~\ref{thm.PF.morse}.
\end{remark}

\subsection{Open questions}

While the homotopy types of $\Pm{G}$ and $\Pf{G}$ answer
many natural questions about these complexes, some further
questions remain.

\begin{question}
Let $k$ be a nonnegative integer, and assume that $G$ has at least $k$ disjoint quasi-cycles.
What can we say about the remainders of the polynomials $f_{\Pm{G}}\tup{x}$ and $f_{\Pf{G}}\tup{x}$ modulo $\tup{1+x}^{k+1}$ ?
\end{question}

\begin{question}
We proved our claims about $\Pm{G}$ and $\Pf{G}$ by induction.
Can they be proved more directly? In particular, can the
acyclic matchings we recursively constructed on $\Pm{G}$ and
$\Pf{G}$ be described directly?
\end{question}

\begin{question}
We can generalize $\Pm{G}$ and $\Pf{G}$ as follows:
Fix a positive integer $r$.
Define two complexes
    \begin{align*}
        \Pf{G, r} &:= \set{ F \subseteq E \with F \text{ contains no $r$ arc-disjoint $s-t$-paths}} , \\
        \Pm{G, r} &:= \set{ F \subseteq E \with E\setminus F \text{ contains $r$ arc-disjoint $s-t$-paths}}
    \end{align*}
on the ground set $E$.
What can we say about these two complexes?
The Euler characteristics no longer restrict themselves to the values $0, 1, -1$
(for example, if $G$ consists of two vertices $s$ and $t$ and $k$ arcs from $s$ to $t$,
then $\chitil\tup{\Pf{G, r}} = \tup{-1}^r \dbinom{k-1}{r-1}$
and  $\chitil\tup{\Pm{G, r}} = \tup{-1}^{k+r-1} \dbinom{k-1}{r-1}$).
Still, can anything be said about these complexes?  E.g., are they bouquets of spheres? combinatorial grapes?
\end{question}

The latter question is of particular interest in that
any interesting homotopical properties of the complexes
$\Pf{G, r}$ and $\Pm{G, r}$ would suggest a topological
undercurrent in the theory of network flows.
(Indeed, the existence of $r$ arc-disjoint $s-t$-paths
is equivalent to the existence of an $s-t$-flow of
value $r$; see \cite[Theorem 4.2 and paragraph thereafter]{ForFul62}.)

\appendix


\section{\label{sect.apx-morse}Proofs of Morse theory basics}

In this appendix, we shall give proofs for two lemmas left unproved
in Section~\ref{sect.dmt}.

\begin{proof}[Proof of Lemma \ref{lem.dmt.cone}.]
Let $W$ be the ground set of $\Delta$. Let $w\in W$ be such that $\Delta$ is a
cone with apex $w$. (Such a $w$ exists by assumption.)

For each subset $A$ of $W$ that contains $w$, we set $A^-:=A\setminus
\left\{  w\right\}  $. If $A\in\Delta$ contains $w$, then $A^-\in\Delta$
(since $A^-=A\setminus\left\{  w\right\}  \subseteq A$) and $w\notin A^-$
(by definition).

For each subset $A$ of $W$ that does not contain $w$, we set $A^+%
:=A\cup\left\{  w\right\}  $. If $A\in\Delta$ does not contain $w$, then
$A^+\in\Delta$ (since $\Delta$ is a cone with apex $w$, so that $A\in\Delta$
entails $A\cup\left\{  w\right\}  \in\Delta$) and $w\in A^+$ (by definition).

Define a map $\mu:\Delta\rightarrow\Delta$ by setting%
\[
\mu\left(  A\right)  =%
\begin{cases}
A^+, & \text{if }w\notin A;\\
A^-, & \text{if }w\in A
\end{cases}
\qquad\text{for each }A\in\Delta.
\]
This map $\mu$ is well-defined (by the previous two paragraphs) and is an
involution (since each subset $A$ of $W$ satisfies $\left(  A^-\right)
^+=A$ if it contains $w$, and $\left(  A^+\right)  ^-=A$ if it does
not). Moreover, each $A\in\Delta$ satisfies either $\mu\left(  A\right)  \prec
A$ or $\mu\left(  A\right)  \succ A$ (indeed, if $w\in A$, then $\mu\left(
A\right)  =A^-=A\setminus\left\{  w\right\}  \prec A$, whereas otherwise we
have $\mu\left(  A\right)  =A^+=A\cup\left\{  w\right\}  \succ A$). Thus,
$\left(  \Delta,\mu\right)  $ is a matching on $\Delta$. It has no unmatched
faces. Thus, by the definition of $u_{\mu}\left(  x\right)  $, we have%
\[
u_{\mu}\left(  x\right)  =\sum_{I\in\Delta\setminus\Delta}x^{\#I}=\left(
\text{empty sum}\right)  =0.
\]

It remains to show that this matching $\left(  \Delta,\mu\right)  $ is acyclic.

To show this, we let $\left(  F_{1},F_{2},\ldots,F_{n}\right)  $ be a cycle of
$\mu$. Thus, $F_{1},F_{2},\ldots,F_{n}$ are distinct faces of $\Delta$,
satisfying $n\geq2$ and%
\[
F_{1}\succ\mu\left(  F_{1}\right)  \prec F_{2}\succ\mu\left(  F_{2}\right)
\prec F_{3}\succ\cdots\prec F_{n}\succ\mu\left(  F_{n}\right)  \prec F_{1}.
\]
Our matching $\mu$ has the property that the only faces $A\in\Delta$ that
satisfy $A\succ\mu\left(  A\right)  $ are the faces that contain $w$. Thus,
from $F_{1}\succ\mu\left(  F_{1}\right)  $, we conclude that $F_{1}$ contains
$w$. Similarly, $F_{2}$ contains $w$. The definition of $\mu$ yields
$\mu\left(  F_{1}\right)  =F_{1}^-$ (since $F_{1}$ contains $w$), so that
$\mu\left(  F_{1}\right)  $ does not contain $w$.

However, $\mu\left(  F_{1}\right)  \prec F_{2}$, so that the set difference
$F_{2}\setminus\mu\left(  F_{1}\right)  $ has exactly $1$ element. This
element must be $w$ (since $F_{2}$ contains $w$, but $\mu\left(  F_{1}\right)
$ does not). Therefore, $F_{2}\setminus\mu\left(  F_{1}\right)  =\left\{
w\right\}  $. Since $\mu\left(  F_{1}\right)  \subseteq F_{2}$ (because
$\mu\left(  F_{1}\right)  \prec F_{2}$), this entails $F_{2}=\mu\left(
F_{1}\right)  \cup\left\{  w\right\}  $. On the other hand, $F_{1}=\mu\left(
F_{1}\right)  \cup\left\{  w\right\}  $ (since $\mu\left(  F_{1}\right)
=F_{1}^-=F_{1}\setminus\left\{  w\right\}  $). Comparing these two
equalities, we obtain $F_{1}=F_{2}$. This contradicts the fact that the faces
$F_{1},F_{2},\ldots,F_{n}$ are distinct.

Forget that we fixed $\left(  F_{1},F_{2},\ldots,F_{n}\right)  $. We thus have
found a contradiction for every cycle $\left(  F_{1},F_{2},\ldots
,F_{n}\right)  $ of $\mu$. Hence, the matching $\mu$ has no cycle, i.e., is
acyclic. This completes the proof of Lemma \ref{lem.dmt.cone}.
\end{proof}

\begin{proof}[Proof of Lemma \ref{lem.dmt.rec}.]
For each subset $A$ of $W$ that contains $w$, we set $A^-:=A\setminus
\left\{  w\right\}  $. This set $A^-$ always satisfies $w\notin A^-$ and
$A^-\prec A$ and $\#\left(  A^-\right)  =\#A-1$. Moreover, if $A\in\Delta$
contains $w$, then $A^-\in\Delta$ (since $A^-=A\setminus\left\{
w\right\}  \subseteq A$).

For each subset $A$ of $W$ that does not contain $w$, we set $A^+%
:=A\cup\left\{  w\right\}  $. This set $A^+$ always satisfies $w\in A^+$
and $A^+\succ A$ and $\#\left(  A^+\right)  =\#A+1$. However, if
$A\in\Delta$ does not contain $w$, then we don't always have $A^+\in\Delta$.

Note that each subset $A$ of $W$ satisfies
\begin{equation}
\left(  A^-\right)  ^+=A\ \ \ \ \ \ \ \ \ \ \text{if }w\in A,
\label{pf.lem.dmt.rec.-+}%
\end{equation}
and satisfies%
\begin{equation}
\left(  A^+\right)  ^-=A\ \ \ \ \ \ \ \ \ \ \text{if }w\notin A.
\label{pf.lem.dmt.rec.+-}%
\end{equation}

Moreover, the operations $A\mapsto A^-$ and $A\mapsto A^+$ preserve
covering relations: i.e.,

\begin{itemize}
\item If $A$ and $B$ are two subsets of $W$ that contain $w$, then%
\begin{equation}
A\prec B \quad \text{implies} \quad A^-\prec B^-. \label{pf.lem.dmt.rec.prec-}%
\end{equation}
[\textit{Proof:} Let $A$ and $B$ be two subsets of $W$ that contain $w$.
Assume that $A \prec B$. Now, $A\prec B$ means that
$B = A \cup \set{b}$ for some $b\in B\setminus A$. Consider
this $b$. From $b \in B \setminus A$, we obtain $b \notin A$ and thus
$b \neq w$ (because $b \notin A$ and $w \in A$).
Combining $b \in B$ with $b \neq w$, we obtain
$b \in B \setminus \set{w} = B^-$. Moreover, $b \notin A$ (since
$b \in B \setminus A$) and thus $b \notin A \setminus \set{w} = A^-$.
Combining $b \in B^-$ with $b \notin A^-$, we obtain
$b \in B^- \setminus A^-$.
Now, $B^- = B \setminus \set{w} = \tup{A \cup \set{b}} \setminus \set{w}$
(since $B = A \cup \set{b}$), so that
$B^- = \tup{A \cup \set{b}} \setminus \set{w} = \tup{A \setminus \set{w}}
\cup \set{b}$ (since $b \neq w$). In view of $A \setminus \set{w} = A^-$,
we can rewrite this as $B^- = A^- \cup \set{b}$.
Since $b \in B^- \setminus A^-$, this equality shows that
$A^- \prec B^-$. This proves \eqref{pf.lem.dmt.rec.prec-}.] \\

\item If $A$ and $B$ are two subsets of $W$ that don't contain $w$, then%
\begin{equation}
A\prec B \quad \text{implies} \quad A^+\prec B^+. \label{pf.lem.dmt.rec.prec+}%
\end{equation}
[\textit{Proof:} Let $A$ and $B$ be two subsets of $W$ that don't
contain $w$. Assume that $A \prec B$. Now, $A\prec B$ means that
$B = A \cup \set{b}$ for some $b\in B\setminus A$. Consider
this $b$. From $b \in B \setminus A$, we obtain $b \in B$ and thus
$b \neq w$ (because $b \in B$ and $w \notin B$).
Combining $b \notin A$ with $b \neq w$, we obtain
$b \notin A \cup \set{w} = A^+$. Moreover, $b \in B \setminus A
\subseteq B \subseteq B \cup \set{w} = B^+$.
Combining $b \in B^+$ with $b \notin A^+$, we obtain
$b \in B^+ \setminus A^+$.
Now, $B^+ = B \cup \set{w} = \tup{A \cup \set{b}} \cup \set{w}$
(since $B = A \cup \set{b}$), so that
$B^+ = \tup{A \cup \set{b}} \cup \set{w} = \tup{A \cup \set{w}}
\cup \set{b} = A^+ \cup \set{b}$
(since $A \cup \set{w} = A^+$).
Since $b \in B^+ \setminus A^+$, this equality shows that
$A^+ \prec B^+$. This proves \eqref{pf.lem.dmt.rec.prec+}.]
\end{itemize}


The set $\Delta\setminus\dl_{\Delta}\left(  w\right)
$ consists of all faces $A\in\Delta$ that contain $w$. Removing $w$ from such
a face yields a face of $\lk_{\Delta}\left(  w\right)
$. In other words, $A^-\in\lk_{\Delta}\left(
w\right)  $ for each $A\in\Delta\setminus\dl_{\Delta
}\left(  w\right)  $. Conversely, inserting $w$ into a face of
$\lk_{\Delta}\left(  w\right)  $ yields a face
$A\in\Delta$ that contains $w$, that is, a face in $\Delta\setminus
\dl_{\Delta}\left(  w\right)  $. In other words,
$A^+\in\Delta\setminus\dl_{\Delta}\left(  w\right)
$ for each $A\in\lk_{\Delta}\left(  w\right)  $. Thus,
we have two mutually inverse bijections%
\begin{align}
\Delta\setminus\dl_{\Delta}\left(  w\right)   &
\rightarrow\lk_{\Delta}\left(  w\right)  ,\nonumber\\
A  &  \mapsto A^-=A\setminus\left\{  w\right\}  \label{pf.lem.dmt.rec.bij-}%
\end{align}
and%
\begin{align}
\lk_{\Delta}\left(  w\right)   &  \rightarrow
\Delta\setminus\dl_{\Delta}\left(  w\right)
,\nonumber\\
A  &  \mapsto A^+=A\cup\left\{  w\right\}  . \label{pf.lem.dmt.rec.bij+}%
\end{align}

Recall that $M_+\subseteq\lk_{\Delta}\left(
w\right)  \subseteq 2^{W\setminus\left\{  w\right\}  }$, so that each face
$B\in M_+$ is a subset of $W\setminus\left\{  w\right\}  $. In other words,
each face $B\in M_+$ is a subset of $W$ that does not contain $w$. Hence, we
can set%
\[
M_+^+:=\left\{  B^+\ \mid\ B\in M_+\right\}  .
\]
Then, $M_+^+\subseteq\Delta$ (since each $B\in M_+$ satisfies $B\in
M_+\subseteq\lk_{\Delta}\left(  w\right)  $ and thus
$B\cup\left\{  w\right\}  \in\Delta$, so that $B^+=B\cup\left\{  w\right\}
\in\Delta$). Moreover, the set $M_+^+$ only contains faces that contain
$w$ (since $B^+$ contains $w$ for each $B\in M_+$). Thus, every face $A\in
M_+^+$ satisfies $w\in A$ and therefore
$A\notin \dl_{\Delta}\left(  w\right)  $ (since
$A\in\dl_{\Delta}\left(  w\right)  $ would mean that $w\notin A$), so that
$A\in\Delta\setminus\dl_{\Delta}\left(  w\right)  $
(since $A\in M_+^+\subseteq\Delta$). In other words, $M_+^+%
\subseteq\Delta\setminus\dl_{\Delta}\left(  w\right)
$.

Furthermore, $M_-\subseteq\dl_{\Delta}\left(
w\right)  \subseteq2^{W\setminus\left\{  w\right\}  }$. Hence, each face $B\in
M_-$ is a subset of $W\setminus\left\{  w\right\}  $, and thus satisfies
$w\notin B$. In other words, $M_-$ only contains faces that don't contain
$w$.

We now define a set
\begin{equation}
M:=M_-\cup M_+^+. \label{pf.lem.dmt.rec.M=}%
\end{equation}
Note that $M\subseteq\Delta$ (since
$M_-\subseteq\dl_{\Delta}\left(  w\right)  \subseteq\Delta$ and
$M_+^+ \subseteq\Delta$). Moreover, the union on the right hand side of
\eqref{pf.lem.dmt.rec.M=} is a disjoint union (since the set $M_-$ only
contains faces that don't contain $w$, whereas the set $M_+^+$ only
contains faces that contain $w$).

We observe the following:

\begin{statement}
\textit{Claim 1:} Let $A\in M$ be such that $w\notin A$. Then, $\mu_-\left(
A\right)  \in M$.
\end{statement}

\begin{proof}
[Proof of Claim 1.]The face $A$ does not contain $w$ (since $w\notin A$).
Hence, $A\notin M_+^+$ (since the set $M_+^+$ only contains faces that
contain $w$). Combining this with $A\in M$, we obtain $A\in M\setminus
M_+^+\subseteq M_-$ (since $M=M_-\cup M_+^+$). Thus, $\mu
_-\left(  A\right)  $ is well-defined and belongs to $M_-$ (since $\mu
_-$ is a map $M_-\rightarrow M_-$). Therefore, $\mu_-\left(  A\right)
\in M_-\subseteq M$ (since $M=M_-\cup M_+^+$). This proves Claim 1.
\end{proof}

\begin{statement}
\textit{Claim 2:} Let $A\in M$ be such that $w\in A$. Then, $\left(  \mu
_+\left(  A^-\right)  \right)  ^+\in M$.
\end{statement}

\begin{proof}
[Proof of Claim 2.]The face $A$ contains $w$ (since $w\in A$). Hence, $A\notin
M_-$ (since the set $M_-$ only contains faces that don't contain $w$).
Combining this with $A\in M$, we obtain $A\in M\setminus M_-\subseteq
M_+^+$ (since $M=M_-\cup M_+^+$). Thus, $A\in M_+^+=\left\{
B^+\ \mid\ B\in M_+\right\}  $. In other words, $A=B^+$ for some $B\in
M_+$. Consider this $B$. From $A=B^+$, we obtain $A^-=\left(
B^+\right)  ^-=B\in M_+$. Thus $\mu_+\left(  A^-\right)  $ is
well-defined and belongs to $M_+$ (since $\mu_+$ is a map $M_+%
\rightarrow M_+$). Hence, $\mu_+\left(  A^-\right)  \in M_+$, so that
$\left(  \mu_+\left(  A^-\right)  \right)  ^+\in M_+^+$ (by the
definition of $M_+^+$), and therefore $\left(  \mu_+\left(
A^-\right)  \right)  ^+\in M_+^+\subseteq M$ (since $M=M_-\cup
M_+^+$). This proves Claim 2.
\end{proof}

Combining Claim 1 with Claim 2, we see that
\[%
\begin{cases}
\mu_-\left(  A\right)  , & \text{if }w\notin A;\\
\left(  \mu_+\left(  A^-\right)  \right)  ^+, & \text{if }w\in A
\end{cases}
\ \ \in M\ \ \ \ \ \ \ \ \ \ \text{for each }A\in M.
\]
This lets us construct a map%
\begin{align*}
\mu:M  &  \rightarrow M,\\
A  &  \mapsto%
\begin{cases}
\mu_-\left(  A\right)  , & \text{if }w\notin A;\\
\left(  \mu_+\left(  A^-\right)  \right)  ^+, & \text{if }w\in A.
\end{cases}
\end{align*}
We shall show that this map $\mu$ is an acyclic matching on $\Delta$. First,
we argue that it is an involution:

\begin{statement}
\textit{Claim 3:} We have $\mu\circ\mu=\operatorname*{id}$.
\end{statement}

\begin{proof}
[Proof of Claim 3.]It is enough to show that $\mu\left(  \mu\left(  A\right)
\right)  =A$ for each $A\in M$. So let us fix $A\in M$. We are in one of the
following two cases:

\textit{Case 1:} We have $w\in A$.

\textit{Case 2:} We have $w\notin A$.

First consider Case 1. In this case, we have $w\in A$. The definition of $\mu$
thus yields $\mu\left(  A\right)  =\left(  \mu_+\left(  A^-\right)
\right)  ^+$. Set $C:=\mu\left(  A\right)  $. Thus, $C=\mu\left(  A\right)
=\left(  \mu_+\left(  A^-\right)  \right)  ^+$, so that $C^-=\left(
\left(  \mu_+\left(  A^-\right)  \right)  ^+\right)  ^-=\mu_+\left(
A^-\right)  $ (by \eqref{pf.lem.dmt.rec.+-}). Hence, $\mu_+\left(
C^-\right)  =\mu_+\left(  \mu_+\left(  A^-\right)  \right)  =A^-$
(since $\mu_+\circ\mu_+=\operatorname*{id}$ (because $\mu_+$ is a
matching)) and therefore $\left(  \mu_+\left(  C^-\right)  \right)
^+=\left(  A^-\right)  ^+=A$ (by \eqref{pf.lem.dmt.rec.-+}).

Obviously, $w\in\left(  \mu_+\left(  A^-\right)  \right)  ^+$ (since
$w\in B^+$ for any $B$). In other words, $w\in C$ (since $C=\left(  \mu
_+\left(  A^-\right)  \right)  ^+$). Hence, the definition of $\mu$
yields $\mu\left(  C\right)  =\left(  \mu_+\left(  C^-\right)  \right)
^+=A$. Since $\mu\left(  A\right)  =C$, we now have $\mu\left(  \mu\left(
A\right)  \right)  =\mu\left(  C\right)  =A$. Thus, $\mu\left(  \mu\left(
A\right)  \right)  =A$ is proved in Case 1.

Now, consider Case 2. Here, we have $w\notin A$. The definition of $\mu$ thus
yields $\mu\left(  A\right)  =\mu_-\left(  A\right)  \in M_-$ (since
$\mu_-$ is a map from $M_-$ to $M_-$). Thus, the face $\mu\left(
A\right)  $ does not contain $w$ (since the set $M_-$ only contains faces
that don't contain $w$). In other words, $w\notin\mu\left(  A\right)  $.
Hence, the definition of $\mu$ yields $\mu\left(  \mu\left(  A\right)
\right)  =\mu_-\left(  \mu\left(  A\right)  \right)  =\mu_-\left(  \mu
_-\left(  A\right)  \right)  $ (since $\mu\left(  A\right)  =\mu_-\left(
A\right)  $). But $\mu_-\left(  \mu_-\left(  A\right)  \right)  =A$ (since
$\mu_-\circ\mu_-=\operatorname*{id}$ (because $\mu_-$ is a matching)).
Thus, $\mu\left(  \mu\left(  A\right)  \right)  =\mu_-\left(  \mu_-\left(
A\right)  \right)  =A$. Hence, $\mu\left(  \mu\left(  A\right)  \right)  =A$
is proved in Case 2.

We have now proved $\mu\left(  \mu\left(  A\right)  \right)  =A$ in both Cases
1 and 2. Hence, $\mu\left(  \mu\left(  A\right)  \right)  =A$ always holds,
and Claim 3 is proved.
\end{proof}

Next, we argue that $\mu$ is a matching:

\begin{statement}
\textit{Claim 4:} Each $A\in M$ satisfies either $\mu\left(  A\right)  \prec
A$ or $\mu\left(  A\right)  \succ A$.
\end{statement}

\begin{proof}
[Proof of Claim 4.]Let $A\in M$. We are in one of the following two cases:

\textit{Case 1:} We have $w\in A$.

\textit{Case 2:} We have $w\notin A$.

First consider Case 1. In this case, we have $w\in A$. The definition of $\mu$
thus yields $\mu\left(  A\right)  =\left(  \mu_+\left(  A^-\right)
\right)  ^+$. However, since $\mu_+$ is a matching, we must have either
$\mu_+\left(  A^-\right)  \prec A^-$ or $\mu_+\left(  A^-\right)
\succ A^-$. Since both $\mu_+\left(  A^-\right)  $ and $A^-$ are
subsets of $W$ that don't contain $w$, we thus conclude using
\eqref{pf.lem.dmt.rec.prec+} that we have either $\left(  \mu_+\left(
A^-\right)  \right)  ^+\prec\left(  A^-\right)  ^+$ or $\left(
\mu_+\left(  A^-\right)  \right)  ^+\succ\left(  A^-\right)  ^+$. In
view of $\left(  \mu_+\left(  A^-\right)  \right)  ^+=\mu\left(
A\right)  $ and $\left(  A^-\right)  ^+=A$, we can rewrite this as
follows: We have either $\mu\left(  A\right)  \prec A$ or $\mu\left(
A\right)  \succ A$. Thus, Claim 4 is proved in Case 1.

Now, consider Case 2. In this case, we have $w\notin A$. The definition of
$\mu$ thus yields $\mu\left(  A\right)  =\mu_-\left(  A\right)  $. However,
since $\mu_-$ is a matching, we must have either $\mu_-\left(  A\right)
\prec A$ or $\mu_-\left(  A\right)  \succ A$. In view of $\mu_-\left(
A\right)  =\mu\left(  A\right)  $, we can rewrite this as follows: We have
either $\mu\left(  A\right)  \prec A$ or $\mu\left(  A\right)  \succ A$. Thus,
Claim 4 is proved in Case 2.

We have now proved Claim 4 in both Cases 1 and 2; thus, Claim 4 always holds.
\end{proof}

Claim 3 and Claim 4 reveal that $\left(  M,\mu\right)  $ is a matching on
$\Delta$. Next, we claim:

\begin{statement}
\textit{Claim 5:} This matching $\left(  M,\mu\right)  $ is acyclic.
\end{statement}

\begin{proof}
[Proof of Claim 5.]Let us first observe that any cycle of $\mu$ can be
rotated: If $\left(  F_{1},F_{2},\ldots,F_{n}\right)  $ is a cycle of $\mu$,
then $\left(  F_{i},F_{i+1},\ldots,F_{n},F_{1},F_{2},\ldots,F_{i-1}\right)  $
is also a cycle of $\mu$ for each $i\in\left\{  1,2,\ldots,n\right\}  $
(because the definition of a cycle has rotational symmetry).

Let $\left(  F_{1},F_{2},\ldots,F_{n}\right)  $ be a cycle of $\mu$. Thus,
$F_{1},F_{2},\ldots,F_{n}$ are distinct faces in $M$, satisfying $n\geq2$ and%
\begin{equation}
F_{1}\succ\mu\left(  F_{1}\right)  \prec F_{2}\succ\mu\left(  F_{2}\right)
\prec F_{3}\succ\cdots\prec F_{n}\succ\mu\left(  F_{n}\right)  \prec F_{1}.
\label{pf.lem.dmt.rec.c5.pf.chain}%
\end{equation}
We will derive a contradiction.

Let us set $F_{n+1}:=F_{1}$, so that each $j\in\left\{  1,2,\ldots,n\right\}
$ satisfies%
\begin{equation}
F_{j}\succ\mu\left(  F_{j}\right)  \prec F_{j+1}
\label{pf.lem.dmt.rec.c5.pf.1}%
\end{equation}
(by \eqref{pf.lem.dmt.rec.c5.pf.chain}). We distinguish between the following
two cases:

\textit{Case 1:} Some $i\in\left\{  1,2,\ldots,n\right\}  $ satisfies $w\in
F_{i}$.

\textit{Case 2:} No $i\in\left\{  1,2,\ldots,n\right\}  $ satisfies $w\in
F_{i}$.

We consider Case 1 first. In this case, some $i\in\left\{  1,2,\ldots
,n\right\}  $ satisfies $w\in F_{i}$. By rotating the cycle $\left(
F_{1},F_{2},\ldots,F_{n}\right)  $ to $\left(  F_{i},F_{i+1},\ldots
,F_{n},F_{1},F_{2},\ldots,F_{i-1}\right)  $, we can ensure that this $i$
becomes $1$. Thus, we WLOG assume that $i=1$.

Hence, $w\in F_{1}$ (since $w\in F_{i}$). Therefore, the definition of $\mu$
yields $\mu\left(  F_{1}\right)  =\left(  \mu_+\left(  F_{1}^-\right)
\right)  ^+$. But $w\in\left(  \mu_+\left(  F_{1}^-\right)  \right)
^+$ (since $w\in B^+$ for any set $B$). In other words, $w\in\mu\left(
F_{1}\right)  $ (since $\mu\left(  F_{1}\right)  =\left(  \mu_+\left(
F_{1}^-\right)  \right)  ^+$). However, \eqref{pf.lem.dmt.rec.c5.pf.chain}
also yields $\mu\left(  F_{1}\right)  \prec F_{2}$, so that $\mu\left(
F_{1}\right)  \subseteq F_{2}$ and therefore $w\in\mu\left(  F_{1}\right)
\subseteq F_{2}$.

We have thus proved $w\in F_{2}$ using \eqref{pf.lem.dmt.rec.c5.pf.chain} and
$w\in F_{1}$. Likewise, we can prove $w\in F_{3}$ (using
\eqref{pf.lem.dmt.rec.c5.pf.chain} and $w\in F_{2}$) and $w\in F_{4}$ (using
\eqref{pf.lem.dmt.rec.c5.pf.chain} and $w\in F_{3}$) and so on. This line of
reasoning shows eventually (i.e., by induction on $j$) that $w\in F_{j}$ for
each $j\in\left\{  1,2,\ldots,n\right\}  $.

Now, let $j\in\left\{  1,2,\ldots,n\right\}  $. As we just showed, we have
$w\in F_{j}$, so that $\mu\left(  F_{j}\right)  =\left(  \mu_+\left(
F_{j}^-\right)  \right)  ^+$ (by the definition of $\mu$). Thus, $\left(
\mu\left(  F_{j}\right)  \right)  ^-=\left(  \left(  \mu_+\left(
F_{j}^-\right)  \right)  ^+\right)  ^-=\mu_+\left(  F_{j}^-\right)
$ (by \eqref{pf.lem.dmt.rec.+-}).

But $w\in\left(  \mu_+\left(  F_{j}^-\right)  \right)  ^+$ (since $w\in
B^+$ for any set $B$). In other words, $w\in\mu\left(  F_{j}\right)  $
(since $\mu\left(  F_{j}\right)  =\left(  \mu_+\left(  F_{j}^-\right)
\right)  ^+$). Hence, $w\in\mu\left(  F_{j}\right)  \subseteq F_{j+1}$
(since \eqref{pf.lem.dmt.rec.c5.pf.1} says that $\mu\left(  F_{j}\right)
\prec F_{j+1}$). This shows that $F_{j+1}^-$ is well-defined.

From \eqref{pf.lem.dmt.rec.c5.pf.1}, we know that $F_{j}\succ\mu\left(
F_{j}\right)  \prec F_{j+1}$. Since $F_{j}$, $\mu\left(  F_{j}\right)  $ and
$F_{j+1}$ are subsets of $W$ that contain $w$ (because $w\in F_{j}$ and
$w\in\mu\left(  F_{j}\right)  $ and $w\in F_{j+1}$), we thus conclude using
\eqref{pf.lem.dmt.rec.prec-} that $F_{j}^-\succ\left(  \mu\left(
F_{j}\right)  \right)  ^-\prec F_{j+1}^-$. In other words, $F_{j}^-%
\succ\mu_+\left(  F_{j}^-\right)  \prec F_{j+1}^-$ (since $\left(
\mu\left(  F_{j}\right)  \right)  ^-=\mu_+\left(  F_{j}^-\right)  $).

Forget that we fixed $j$. We thus have shown that $F_{j}^-\succ\mu
_+\left(  F_{j}^-\right)  \prec F_{j+1}^-$ for each $j\in\left\{
1,2,\ldots,n\right\}  $. In other words,
\begin{equation}
      F_{1}^- \succ \mu_+\left(  F_{1}^-\right)
\prec F_{2}^- \succ \mu_+\left(  F_{2}^-\right)
\prec F_{3}^- \succ \cdots
\prec F_{n}^- \succ \mu_+\left(  F_{n}^-\right)
\prec F_{1}^-
\label{pf.lem.dmt.rec.c5.pf.c1.chain}
\end{equation}
(since $F_{n+1}=F_{1}$). Moreover, the $n$ sets $F_{1}^-,F_{2}^-,
\ldots,F_{n}^-$ are distinct\footnote{\textit{Proof.} Assume the contrary.
Thus, some $i<j$ in $\left\{  1,2,\ldots,n\right\}  $ satisfy $F_{i}^-
=F_{j}^-$. Consider these $i<j$. Then, \eqref{pf.lem.dmt.rec.-+} yields
$\left(  F_{i}^-\right)  ^+=F_{i}$ and $\left(  F_{j}^-\right)
^+=F_{j}$. Thus, $F_{i}=\left(  F_{i}^-\right)  ^+=\left(  F_{j}%
^-\right)  ^+$ (since $F_{i}^-=F_{j}^-$), so that $F_{i}=\left(  F_{j}%
^-\right)  ^+=F_{j}$. But this contradicts the distinctness of
$F_{1},F_{2},\ldots,F_{n}$. This contradiction shows that our assumption was
false.} and belong to $M_+$\ \ \ \ \footnote{\textit{Proof.} Let
$j\in\left\{  1,2,\ldots,n\right\}  $. We must show that $F_{j}^-\in M_+$.
\par
We have $F_{j}\in M=M_-\cup M_+^+$. However, the set $F_{j}$ contains
$w$ (since $F_{j}^-$ is defined in the first place), whereas the set $M_-$
only contains faces that don't contain $w$. Thus, $F_{j}\notin M_-$.
Combining this with $F_{j}\in M_-\cup M_+^+$, we obtain $F_{j}\in\left(
M_-\cup M_+^+\right)  \setminus M_-\subseteq M_+^+=\left\{
B^+\ \mid\ B\in M_+\right\}  $. In other words, $F_{j}=B^+$ for some
$B\in M_+$. Consider this $B$.
\par
From $F_{j}=B^+$, we obtain $F_{j}^-=\left(  B^+\right)  ^-=B$ (by
\eqref{pf.lem.dmt.rec.+-}), so that $F_{j}^-=B\in M_+$, qed.}. Thus, they
are $n$ distinct faces of $M_+$. Since they satisfy
\eqref{pf.lem.dmt.rec.c5.pf.c1.chain} (and since $n\geq2$), we thus conclude
that $\left(  F_{1}^-,F_{2}^-,\ldots,F_{n}^-\right)  $ is a cycle of the
matching $\left(  M_+,\mu_+\right)  $. Thus, $\left(  M_+,\mu
_+\right)  $ has a cycle, i.e., is not acyclic. But this contradicts the
assumption that $\left(  M_+,\mu_+\right)  $ is acyclic. Thus, we have
found a contradiction in Case 1.

Let us now consider Case 2. In this case, no $i\in\left\{  1,2,\ldots
,n\right\}  $ satisfies $w\in F_{i}$.

In other words, none of the $n$ faces $F_{1},F_{2},\ldots,F_{n}$ contains $w$.
Thus, these $n$ faces $F_{1},F_{2},\ldots,F_{n}$ belong to $M_-%
$\ \ \ \ \footnote{\textit{Proof.} Let $j\in\left\{  1,2,\ldots,n\right\}  $.
We must show that $F_{j}\in M_-$.
\par
We have $F_{j}\in M=M_-\cup M_+^+$. However, the set $F_{j}$ does not
contain $w$ (since none of the $n$ faces $F_{1},F_{2},\ldots,F_{n}$ contains
$w$), whereas the set $M_+^+$ only contains faces that contain $w$. Thus,
$F_{j}\notin M_+^+$. Combining this with $F_{j}\in M_-\cup M_+^+$,
we obtain $F_{j}\in\left(  M_-\cup M_+^+\right)  \setminus M_+%
^+\subseteq M_-$, qed.}.

For each $j\in\left\{  1,2,\ldots,n\right\}  $, we have $w\notin F_{j}$ (since
none of the $n$ faces $F_{1},F_{2},\ldots,F_{n}$ contains $w$) and therefore
$\mu\left(  F_{j}\right)  =\mu_-\left(  F_{j}\right)  $ (by the definition
of $\mu$). Hence, we can rewrite the chain \eqref{pf.lem.dmt.rec.c5.pf.chain}
as follows:%
\[
F_{1}\succ\mu_-\left(  F_{1}\right)  \prec F_{2}\succ\mu_-\left(
F_{2}\right)  \prec F_{3}\succ\cdots\prec F_{n}\succ\mu_-\left(
F_{n}\right)  \prec F_{1}.
\]
This chain (combined with the fact that $F_{1},F_{2},\ldots,F_{n}$ are $n$
distinct faces in $M_-$, and the fact that $n\geq2$) shows that $\left(
F_{1},F_{2},\ldots,F_{n}\right)  $ is a cycle of the matching $\left(
M_-,\mu_-\right)  $. Thus, $\left(  M_-,\mu_-\right)  $ has a cycle,
i.e., is not acyclic. But this contradicts the assumption that $\left(
M_-,\mu_-\right)  $ is acyclic. Thus, we have found a contradiction in
Case 2.

We have now found a contradiction in each case.

Forget that we fixed $\left(  F_{1},F_{2},\ldots,F_{n}\right)  $. We thus have
found a contradiction for each cycle $\left(  F_{1},F_{2},\ldots,F_{n}\right)
$ of $\mu$. Hence, $\mu$ has no cycle. In other words, $\mu$ is acyclic. Claim
5 is thus proved.
\end{proof}

In preparation for our next step, we prove two more simple claims:

\begin{statement}
\textit{Claim 6:} The map%
\begin{align*}
\lk_{\Delta}\left(  w\right)  \setminus M_+  &
\rightarrow\left\{  I\in\Delta\setminus M\ :\ w\in I\right\}  ,\\
A  &  \mapsto A^+%
\end{align*}
is well-defined and is a bijection.
\end{statement}

\begin{proof}
[Proof of Claim 6.]We shall prove that this map is well-defined, injective and surjective.

\begin{proof}
[Well-definedness:]Let us first prove that this map is well-defined. To this
purpose, we must show that $A^+\in\left\{  I\in\Delta\setminus M\ :\ w\in
I\right\}  $ for each $A\in\lk_{\Delta}\left(  w\right)  \setminus M_+$.

Let us fix $A\in\lk_{\Delta}\left(  w\right)
\setminus M_+$. Then, $A\in\lk_{\Delta}\left(
w\right)  $ and $A\notin M_+$.

From $A\in\lk_{\Delta}\left(  w\right)  $, we obtain
$A\subseteq W\setminus\left\{  w\right\}  $ and $A\cup\left\{  w\right\}
\in\Delta$ (by the definition of $\lk_{\Delta}\left(
w\right)  $). The definition of $A^+$ yields $A^+=A\cup\left\{  w\right\}
\in\Delta$ and $w\in A^+$. Moreover, it is easy to see that $A^+\notin
M_-$\ \ \ \ \footnote{\textit{Proof:} Assume the contrary. Thus, $A^+\in
M_-\subseteq\dl_{\Delta}\left(  w\right)
\subseteq2^{W\setminus\left\{  w\right\}  }$. Hence, $A^+\subseteq
W\setminus\left\{  w\right\}  $, so that $w\notin A^+$. But this contradicts
$w\in A^+$. This contradiction shows that our assumption was false.} and
$A^+\notin M_+^+$\ \ \ \ \footnote{\textit{Proof.} Assume the contrary.
Thus, $A^+\in M_+^+=\left\{  B^+\ \mid\ B\in M_+\right\}  $. In
other words $A^+=B^+$ for some $B\in M_+$. Consider this $B$. Now,
\eqref{pf.lem.dmt.rec.+-} yields $A=\left(  \underbrace{A^+}_{=B^+%
}\right)  ^-=\left(  B^+\right)  ^-=B$ (by \eqref{pf.lem.dmt.rec.+-}).
Thus, $A=B\in M_+$, but this contradicts $A\notin M_+$. This contradiction
shows that our assumption was false.}. Combining these two facts, we obtain
$A^+\notin M_-\cup M_+^+=M$ (by \eqref{pf.lem.dmt.rec.M=}). Combining
this with $A^+\in\Delta$, we find $A^+\in\Delta\setminus M$. Thus, $A^+$
is a face $I\in\Delta\setminus M$ satisfying $w\in I$ (since $w\in A^+$). In
other words, $A^+\in\left\{  I\in\Delta\setminus M\ :\ w\in I\right\}  $.
This completes the proof that our map is well-defined.
\end{proof}

\begin{proof}
[Injectivity:]Let us next prove that our map is injective.

For this purpose, we must show that any two faces
$A,B\in\lk_{\Delta}\left(  w\right)  \setminus M_+$ satisfying
$A^+=B^+$ must satisfy $A=B$.

But this is easy: If $A,B\in\lk_{\Delta}\left(
w\right)  \setminus M_+$ are two faces satisfying $A^+=B^+$, then
\eqref{pf.lem.dmt.rec.+-} yields $A=\left(  \underbrace{A^+}_{=B^+%
}\right)  ^-=\left(  B^+\right)  ^-=B$ (by \eqref{pf.lem.dmt.rec.+-}).
Thus, our map is injective.
\end{proof}

\begin{proof}
[Surjectivity:]Let us now prove that our map is surjective.

Let $C\in\left\{  I\in\Delta\setminus M\ :\ w\in I\right\}  $. We must prove
that $C=A^+$ for some $A\in\lk_{\Delta}\left(
w\right)  \setminus M_+$.

Indeed, $C\in\left\{  I\in\Delta\setminus M\ :\ w\in I\right\}  $ shows that
$C\in\Delta\setminus M$ and $w\in C$. From $C\in\Delta\setminus M$, we obtain
$C\in\Delta$ and $C\notin M$. From $w\in C$, we conclude that $C^-$ is
well-defined. Moreover, \eqref{pf.lem.dmt.rec.-+} yields $\left(
C^-\right)  ^+=C\in\Delta$. Now, we have $C^-\subseteq W\setminus
\left\{  w\right\}  $ and $C^-\cup\left\{  w\right\}  =\left(  C^-\right)
^+\in\Delta$, so that $C^-\in\lk_{\Delta}\left(
w\right)  $. If we had $C^-\in M_+$, then we would have $\left(
C^-\right)  ^+\in M_+^+$ (by the definition of $M_+^+$), which
would entail $C=\left(  C^-\right)  ^+\in M_+^+\subseteq M_-\cup
M_+^+=M$ (by \eqref{pf.lem.dmt.rec.M=}); but this would contradict
$C\notin M$. Thus, we cannot have $C^-\in M_+$. Hence, we obtain
$C^-\notin M_+$. Combining this with
$C^-\in\lk_{\Delta}\left(  w\right)  $, we obtain
$C^-\in\lk_{\Delta}\left(  w\right)  \setminus M_+$. Since $C=\left(
C^-\right)  ^+$, we thus have $C=A^+$ for some
$A\in\lk_{\Delta}\left(  w\right)  \setminus M_+$ (namely, for $A=C^-$).

Forget that we fixed $C$. We thus have shown that each $C\in\left\{
I\in\Delta\setminus M\ :\ w\in I\right\}  $ can be written as $C=A^+$ for
some $A\in\lk_{\Delta}\left(  w\right)  \setminus
M_+$. In other words, our map is surjective.
\end{proof}

We have now proved that the map
\begin{align*}
\lk_{\Delta}\left(  w\right)  \setminus M_+  &
\rightarrow\left\{  I\in\Delta\setminus M\ :\ w\in I\right\}  ,\\
A  &  \mapsto A^+%
\end{align*}
is well-defined, injective and surjective. Hence, it is a bijection. Claim 6
is thus proved.{}
\end{proof}

\begin{statement}
\textit{Claim 7:} We have%
\[
\left\{  I\in\Delta\setminus M\ :\ w\notin I\right\}
= \dl_{\Delta}\left(  w\right)  \setminus M_-.
\]

\end{statement}

\begin{proof}
[Proof of Claim 7.]The sets $\dl_{\Delta}\left(
w\right)  $ and $M_+^+$ are disjoint (since every face $A\in
\dl_{\Delta}\left(  w\right)  $ satisfies $w\notin A$,
whereas every face $A\in M_+^+$ satisfies $w\in A$). Hence,
$\dl_{\Delta}\left(  w\right)  \setminus M_+%
^+=\dl_{\Delta}\left(  w\right)  $.

However,
\begin{align*}
\left\{  I\in\Delta\setminus M\ \mid\ w\notin I\right\}   &  =\left\{
I\in\Delta\ \mid\ I\notin M\text{ and }w\notin I\right\}
=\underbrace{\left\{  I\in\Delta\ \mid\ w\notin I\right\}  }%
_{=\dl_{\Delta}\left(  w\right)  }\setminus\,M\\
&  =\dl_{\Delta}\left(  w\right)  \setminus
\underbrace{M}_{\substack{=M_-\cup M_+^+\\=M_+^+\cup M_-%
}}=\dl_{\Delta}\left(  w\right)  \setminus\left(
M_+^+\cup M_-\right) \\
&  =\underbrace{\left(  \dl_{\Delta}\left(  w\right)
\setminus M_+^+\right)  }_{=\dl_{\Delta}\left(
w\right)  }\setminus\,M_-=\dl_{\Delta}\left(
w\right)  \setminus M_-.
\end{align*}
This proves Claim 7.
\end{proof}

Finally, we compute the unmatched f-polynomial of the acyclic matching $\mu$.
The definition of the unmatched f-polynomial yields%
\begin{equation}
u_{\mu_-}\left(  x\right)  =\sum_{I\in\dl_{\Delta
}\left(  w\right)  \setminus M_-}x^{\#I} \label{pf.lem.dmt.rec.u-}%
\end{equation}
and%
\begin{equation}
u_{\mu_+}\left(  x\right)  =\sum_{I\in\lk_{\Delta
}\left(  w\right)  \setminus M_+}x^{\#I} \label{pf.lem.dmt.rec.u+}%
\end{equation}
and%
\begin{equation}
u_{\mu}\left(  x\right)  =\sum_{I\in\Delta\setminus M}x^{\#I}=\sum
_{\substack{I\in\Delta\setminus M;\\w\in I}}x^{\#I}+\sum_{\substack{I\in
\Delta\setminus M;\\w\notin I}}x^{\#I}. \label{pf.lem.dmt.rec.u}%
\end{equation}

But Claim 6 yields that the map%
\begin{align*}
\lk_{\Delta}\left(  w\right)  \setminus M_+  &
\rightarrow\left\{  I\in\Delta\setminus M\ :\ w\in I\right\}  ,\\
A  &  \mapsto A^+%
\end{align*}
is a bijection. Hence, we can substitute $A^+$ for $I$ in the sum
$\sum\limits_{\substack{I\in\Delta\setminus M;\\w\in I}}x^{\#I}$. Thus, we
obtain
\begin{align*}
\sum_{\substack{I\in\Delta\setminus M;\\w\in I}}x^{\#I}  &  =\sum
_{A\in\lk_{\Delta}\left(  w\right)  \setminus M_+%
}\underbrace{x^{\#\left(  A^+\right)  }}_{\substack{=x^{\#A+1}\\\text{(since
}\#\left(  A^+\right)  =\#A+1\text{)}}}
=\sum_{A\in\lk_{\Delta}\left(  w\right)  \setminus M_+}x^{\#A+1}\\
&  =\sum_{I\in\lk_{\Delta}\left(  w\right)  \setminus
M_+}\underbrace{x^{\#I+1}}_{=xx^{\#I}}=x\underbrace{\sum_{I\in
\lk_{\Delta}\left(  w\right)  \setminus M_+}x^{\#I}%
}_{\substack{=u_{\mu_+}\left(  x\right)  \\\text{(by
\eqref{pf.lem.dmt.rec.u+})}}}=xu_{\mu_+}\left(  x\right)  .
\end{align*}
Moreover, the summation sign $\sum\limits_{\substack{I\in\Delta\setminus
M;\\w\notin I}}$ is equivalent to
$\sum\limits_{I\in\dl_{\Delta}\left(  w\right)  \setminus M_-}$
(by Claim 7). Thus,
\[
\sum_{\substack{I\in\Delta\setminus M;\\w\notin I}}x^{\#I}
= \sum\limits_{I\in \dl_{\Delta}\left(  w\right)  \setminus M_-}
x^{\#I}
=u_{\mu_-}\left(  x\right)  \ \ \ \ \ \ \ \ \ \ \left(  \text{by
\eqref{pf.lem.dmt.rec.u-}}\right)  .
\]
Thus, \eqref{pf.lem.dmt.rec.u} becomes
\[
u_{\mu}\left(  x\right)  =\underbrace{\sum_{\substack{I\in\Delta\setminus
M;\\w\in I}}x^{\#I}}_{=xu_{\mu_+}\left(  x\right)  }+\underbrace{\sum
_{\substack{I\in\Delta\setminus M;\\w\notin I}}x^{\#I}}_{=u_{\mu_-}\left(
x\right)  }=xu_{\mu_+}\left(  x\right)  +u_{\mu_-}\left(  x\right)
=u_{\mu_-}\left(  x\right)  +xu_{\mu_+}\left(  x\right)  .
\]
The proof of Lemma \ref{lem.dmt.rec} is thus complete.
\end{proof}

\printbibliography

\end{document}